  \theoremstyle{plain}
    \newtheorem{theorem}{Theorem}[section]
  \newtheorem{proposition}[theorem]{Proposition}
  \newtheorem{defthm}[theorem]{Definition--Theorem}
  \newtheorem{lemma}[theorem]{Lemma}
  \newtheorem{deflemma}[theorem]{Definition--Lemma}
  \newtheorem{conjecture}[theorem]{Conjecture}
  \newtheorem{problem}[theorem]{Problem}
\theoremstyle{definition}
  \newtheorem{example}[theorem]{Example}
 \theoremstyle{remark}
  \newtheorem{remark}[theorem]{Remark}
\numberwithin{equation}{section}
\def\Ess{{\mathcal{E}}}
\def\Flags{{\rm Flags}}
\newcommand{\cellsize}{15}
\newlength{\cellsz} \setlength{\cellsz}{\cellsize\unitlength}
\newsavebox{\cell}
\sbox{\cell}{\begin{picture}(\cellsize,\cellsize)
\put(0,0){\line(1,0){\cellsize}}
\put(0,0){\line(0,1){\cellsize}}
\put(\cellsize,0){\line(0,1){\cellsize}}
\put(0,\cellsize){\line(1,0){\cellsize}}
\end{picture}}
\newcommand\cellify[1]{\def\thearg{#1}\def\nothing{}%
\ifx\thearg\nothing
\vrule width0pt height\cellsz depth0pt\else
\hbox to 0pt{\usebox{\cell} \hss}\fi%
\vbox to \cellsz{
\vss
\hbox to \cellsz{\hss$#1$\hss}
\vss}}
\newcommand\tableau[1]{\vtop{\let\\\cr
\baselineskip -16000pt \lineskiplimit 16000pt \lineskip 0pt
\ialign{&\cellify{##}\cr#1\crcr}}}
\newcommand{\kellsize}{18}
\newlength{\kellsz} \setlength{\kellsz}{\kellsize\unitlength}
\newsavebox{\kell}
\sbox{\kell}{\begin{picture}(\kellsize,\kellsize)
\put(0,0){\line(1,0){\kellsize}}
\put(0,0){\line(0,1){\kellsize}}
\put(\kellsize,0){\line(0,1){\kellsize}}
\put(0,\kellsize){\line(1,0){\kellsize}}
\end{picture}}
\newcommand\kellify[1]{\def\thearg{#1}\def\nothing{}%
\ifx\thearg\nothing
\vrule width0pt height\kellsz depth0pt\else
\hbox to 0pt{\usebox{\kell} \hss}\fi%
\vbox to \kellsz{
\vss
\hbox to \kellsz{\hss$#1$\hss}
\vss}}
\newcommand\ktableau[1]{\vtop{\let\\\cr
\baselineskip -16000pt \lineskiplimit 16000pt \lineskip 0pt
\ialign{&\kellify{##}\cr#1\crcr}}}
\font\co=lcircle10
\def\jr{\smash{\raise2pt\hbox{\co \rlap{\rlap{\char'005} \char'007}}
               \raise6pt\hbox{\rlap{\vrule height5pt}}
               \raise2pt\hbox{\rlap{\hskip4pt \vrule height0.4pt depth0pt
                width5.7pt}}
               \raise2pt\hbox{\rlap{\hskip-9.5pt \vrule height.4pt depth0pt
                width6.2pt}}
               \lower6pt\hbox{\rlap{\vrule height4.5pt}}}}
\def\rj{\smash{\raise2pt\hbox{\co \rlap{\rlap{\char'004} \char'006}}
               \raise6pt\hbox{\rlap{\vrule height5pt}}
               \raise2pt\hbox{\rlap{\hskip4pt \vrule height0.4pt depth0pt
                width5.7pt}}
               \raise2pt\hbox{\rlap{\hskip-9.5pt \vrule height.4pt depth0pt
                width6.2pt}}
               \lower6pt\hbox{\rlap{\vrule height4.5pt}}}}
\def\je{\smash{\raise2pt\hbox{\co \rlap{\rlap{\char'005}
                \phantom{\char'007}}}\raise6pt\hbox{\rlap{\vrule height5pt}}
               \raise2pt\hbox{\rlap{\hskip-9.5pt \vrule height.4pt depth0pt
                width6.2pt}}}}
\def\ej{\smash{\raise2pt\hbox{\co \rlap{\rlap{\char'004}\phantom{\char'006}}}
               \raise2pt\hbox{\rlap{\hskip-9.5pt \vrule height.4pt depth0pt
                width6.2pt}}
               \lower6pt\hbox{\rlap{\vrule height4.5pt}}}}
\def\er{\smash{\raise2pt\hbox{\co \rlap{\rlap{\phantom{\char'005}} \char'007}}
               \raise2pt\hbox{\rlap{\hskip4pt \vrule height0.4pt depth0pt
                width5.7pt}}
               \lower6pt\hbox{\rlap{\vrule height4.5pt}}}}
\def\re{\smash{\raise2pt\hbox{\co \rlap{\rlap{\phantom{\char'004}} \char'006}}
               \raise6pt\hbox{\rlap{\vrule height5pt}}
               \raise2pt\hbox{\rlap{\hskip4pt \vrule height0.4pt depth0pt
                width5.7pt}}}}
\def\+{\smash{\lower6pt\hbox{\rlap{\vrule height17pt}}
                \raise2pt
                \hbox{\rlap{\hskip-9pt \vrule height.4pt depth0pt
                width18.7pt}}}}
\def\hor{\smash{\raise2pt\hbox{\rlap{\hskip-9.5pt \vrule height.4pt depth0pt
                width19.2pt}}}}
\def\ver{\smash{\lower6pt\hbox{\rlap{\vrule height17pt}}}}
\def\ho{\smash{\hbox{\rlap{\vrule height5pt}}
                \raise2pt
                \hbox{\rlap{\hskip-9pt \vrule height.4pt depth0pt
                width18.7pt}}}}
\def\textcross{\ \smash{\lower4pt\hbox{\rlap{\hskip4.15pt\vrule height14pt}}
                \raise2.8pt\hbox{\rlap{\hskip-3pt \vrule height.4pt depth0pt
                width14.7pt}}}\hskip12.7pt}
\def\textelbow{\ \hskip.1pt\smash{\raise2.75pt%
                \hbox{\co \hskip 4.15pt\rlap{\rlap{\char'004} \char'006}
                \lower6.8pt\rlap{\vrule height3.5pt}
                \raise3.6pt\rlap{\vrule height3.5pt}}
                \raise2.8pt\hbox{%
                  \rlap{\hskip-7.15pt \vrule height.4pt depth0pt width3.5pt}%
                  \rlap{\hskip4.05pt \vrule height.4pt depth0pt width3.5pt}}}
                \hskip8.7pt}
\begin{document}
\pagestyle{plain}
\title{Some Degenerations of Kazhdan-Lusztig ideals and
multiplicities of Schubert varieties}
\author{Li Li}
\address{Department of Mathematics\\
University of Illinois at Urbana-Champaign\\
Urbana, IL 61801}
\email{llpku@math.uiuc.edu}

\author{Alexander Yong}
\address{Department of Mathematics\\
University of Illinois at Urbana-Champaign\\
Urbana, IL 61801}
\email{ayong@math.uiuc.edu}
\subjclass[2000]{14M15, 14N15}
\keywords{Schubert varieties, Hilbert-Samuel multiplicities, Gr\"{o}bner basis}

\date{January 19, 2010}

\begin{abstract}
We study Hilbert-Samuel multiplicity
for points of Schubert varieties in the complete flag variety, by Gr\"{o}bner degenerations of the Kazhdan-Lusztig
ideal. In the covexillary case, we
give a positive combinatorial rule for multiplicity by
establishing (with a Gr\"{o}bner  basis) a reduced and equidimensional limit whose Stanley-Reisner simplicial
complex is homeomorphic to a shellable ball or sphere. We show that multiplicity counts the
number of facets of this complex. We also obtain a formula for the Hilbert series of the local ring.
In particular, our work gives a multiplicity rule for Grassmannian Schubert varieties,
providing alternative statements and proofs to formulae of [Lakshmibai-Weyman '90], [Rosenthal-Zelevinsky '01],
[Krattenthaler '01], [Kreiman-Lakshmibai '04] and [Woo-Yong '09]. We suggest extensions of our methodology to the general case.
\end{abstract}

\maketitle
\tableofcontents

\section{Introduction}

\subsection{Overview}
Let ${\rm Flags}({\mathbb C}^n)$ denote the variety of complete
flags in ${\mathbb C}^n$. Its Schubert subvarieties $X_w$ are
indexed by permutations $w$ in the symmetric group $S_n$. There has
been substantial interest in understanding the
singularity structure of Schubert varieties. While the singular loci have
been determined, and fundamental properties that hold for all Schubert
varieties have been long established, many mysteries remain about
measures of singularities; see, e.g., \cite{Billey.Lakshmibai, Brion, WYII}.
This paper treats a classical example of such a measure, the
(Hilbert-Samuel) {\bf multiplicity} of a point $p$ in a scheme $X$,
denoted ${\rm mult}_{p}(X)$. This positive integer is the degree of the projectivized
tangent cone $Proj({\rm gr}_{\mathfrak m_p} {\mathcal O}_{p,X})$ as
a subvariety of the projectivized tangent space $Proj(Sym^{\star}
{\mathfrak m_p}/{\mathfrak m}_p^2)$, where $({\mathcal O}_{p,X},
{\mathfrak m_p})$  is the local ring associated to $p\in X$.
Equivalently, if the Hilbert--Samuel polynomial of ${\mathcal
O}_{p,X}$ is $a_d x^d+a_{d-1}x^{d-1}+\ldots +a_0$ ($a_d\neq 0$) then ${\rm
mult}_{p}(X)=d!a_d$. In particular, ${\rm mult}_{p}(X)=1$ if and only if
$X$ is smooth at $p$.

It is an open problem to give a positive combinatorial rule for the
multiplicity of a Schubert variety $X_w$ at its torus fixed points
$e_v\in X_w$ (the problem for arbitrary $p\in X_w$ reduces to this case).
The analogous problem for Grassmannians has been solved; see, e.g.,
\cite{Rosenthal.Zelevinsky, Krattenthaler, Kreiman.Lakshmibai, Kreiman, WYIII}
and the references therein. There has also been related work on multiplicities of
(co)minuscule Grassmannians and for determinantal varieties; a sampling
includes \cite{Lakshmibai.Weyman, Herzog.Trung,  Ghorpade.Raghavan, Ikeda.Naruse, Raghavan.Upadhyay}.

The thesis of this paper is as follows.
A neighbourhood of $e_v\in X_w$ is encoded by the Kazhdan-Lusztig variety ${\mathcal N}_{v,w}$
with explicit coordinates and equations given in \cite{WYII}. We propose to study a choice of
term orders $\prec_{v,w,\pi}$ that depends on $v,w$ and a \emph{shuffling} (total ordering) of variables~$\pi$.
The corresponding Gr\"{o}bner degenerations break
${\mathcal N}_{v,w}$, and its projectivized tangent cone, into an
initial scheme ${\rm init}_{\prec_{v,w,\pi}} {\mathcal N}_{v,w}$ whose reduced
scheme structure is of a union of
coordinate subspaces. By construction, multiplicity is the degree of this monomial ideal.  However,
more seems conjecturally true: first, there exists $\pi$ such that ${\rm
init}_{\prec_{v,w,\pi}} {\mathcal N}_{v,w}$ is both reduced and
equidimensional; and second, one can furthermore choose
$\pi$ so that the corresponding Stanley-Reisner simplicial
complex is homeomorphic to a shellable ball or sphere.
These conjectures assert multiplicity reduces to the combinatorics of counting
the number of facets of a desirable simplicial complex. We label facets
by $\pi$-\emph{shuffled tableaux} that
assign $+$'s to the $n\times n$ grid, using $\pi$ and the corresponding prime
component of the initial ideal.

This paper further formulates the above thesis and collects some
evidence for its efficacy towards the multiplicity problem.

Our main theorems prove the above conjectures for {\bf covexillary Schubert varieties}, i.e.,
those $X_w$ where $w$ avoids the pattern $3412$. We obtain the first multiplicity rule in this case,
which is presently the most general one available in type $A$. Actually, these Schubert varieties have
attracted significant attention in the study of Schubert geometry and
combinatorics; see, e.g., \cite{Lakshmibai.Sandhya, Macdonald, Fulton:Duke92, Lascoux,
Manivel, KMY} and the references therein. For comparison, A.~Lascoux
\cite{Lascoux} studied a different measure of singularities of
Schubert varieties. He gave a combinatorial rule for the
Kazhdan-Lusztig polynomials at singular points of covexillary $X_w$,
extending work of A.~Lascoux and \linebreak M.-P.~Sch\"{u}tzenberger
\cite{LS:KL} for Grassmannian Schubert varieties. Similarly, our
rule also specializes to the Grassmannian case.

For covexillary Schubert varieties, our key observation is that one can pick
$\pi$ (depending on $v,w$) so that the limit scheme is (after $\pi$-shuffling the coordinates and crossing by
affine space) the limit scheme of a matrix Schubert variety \cite{KMY} for
a different covexillary permutation.
We deduce an explicit Gr\"{o}bner basis, with squarefree initial terms, for the Kazhdan-Lusztig ideal under $\prec_{v,w,\pi}$,
extending the Gr\"{o}bner basis theorem of that earlier paper.
The limit is reduced and equidimensional. Using the results of
\cite{KMY}, we prime decompose the initial ideal and show that the $\pi$-shuffled tableaux are in an easy bijection with
flagged semistandard Young tableaux (thus providing some justification for the nomenclature). Hence,
the number of the stated tableaux counts the desired multiplicity, and
as in \cite{WYIII}, a well-known generalization of the Jacobi-Trudi identity yields a simple proof of a determinantal formula.
Also, the Stanley-Reisner complex homeomorphic to a vertex decomposable and hence shellable ball or sphere.
This feature allows us to prove an ``alternating-sign'' formula for a richer invariant than multiplicity,
the Hilbert series of~${\mathcal O}_{e_v,X_w}$.

We remark, that although we work over ${\mathbb C}$, since our Gr\"{o}bner basis involves only coefficients $\pm 1$,
it follows that our formulae are valid over any characteristic. To our best knowledge, independence of characteristic for
multiplicities was not known for general $e_v \in X_w$ (and not even in the covexillary case).

Summarizing, the results in the covexillary case provide some ``proof of concept''
for our thesis.

\subsection{Some related work}
Gr\"{o}bner degeneration has been exploited in a number of related
settings in recent years, and in particular has been applied to the multiplicity problem.
We now discuss some earlier results in type $A$ to provide context for our specific treatment.

V.~Lakshmibai and J.~Weyman \cite{Lakshmibai.Weyman} and V.~Kreiman and~V.~Lakshmibai \cite{Kreiman.Lakshmibai} utilized
standard monomial theory
to determine multiplicity rules for Grassmannians (actually, \cite{Lakshmibai.Weyman} deduces a recursive rule valid for any minuscule $G/P$).

A.~Woo and the second author \cite{WYIII} explain how the
Kazhdan-Lusztig ideals of \cite{WYII} are compatible with the Schubert polynomial
combinatorics of A.~Lascoux and M.-P.~Sch\"{u}tzenberger \cite{Lascoux.Schutzenberger1, Lascoux.Schutzenberger2}.
Moreover, a Gr\"{o}bner basis theorem for arbitrary Kazhdan-Lusztig ideals was obtained,
generalizing work on Schubert determinantal ideals due to \cite{Knutson.Miller}.
The squarefree initial ideal is equidimensional, and the Stanley-Reisner simplicial
complex is homeomorphic to  a shellable ball or sphere; more precisely, it is a \emph{subword complex} as
defined by A.~Knutson and E.~Miller \cite{Knutson.Miller:subword}.  For special cases of Kazhdan-Lusztig
varieties, and choices of $\pi$, the $\pi$-shuffled tableaux are the pipe dreams of S.~Fomin and A.~N.~Kirillov
\cite{Fomin.Kirillov}, and our thesis subsumes the geometric explanation for these pipe dreams
from \cite{Knutson.Miller}. Similar results to \cite{Knutson.Miller}, used in this paper,
were obtained for covexillary Schubert determinantal ideals in \cite{KMY}.

    As an application of \cite{WYIII}, formulae for the multigraded Hilbert series of Kazhdan-Lusztig ideals
were geometrically proved,
where the multigrading comes from the torus action of the invertible diagonal matrices $T\subseteq GL_n$.
While this theorem is actually used in a crucial way in the present paper, in general this Hilbert series does not
help to directly compute multiplicity, because this torus action is not compatible with the dilation action. However,
\emph{if} a Kazhdan-Lusztig ideal happens to already be
homogeneous with respect to the standard grading that assigns each variable degree one, then it is automatic
that it is also the ideal for its projectivized tangent cone, and one can deduce a formula for multiplicity from this
Hilbert series (homogeneity is guaranteed if $w_0v$ is $321$-avoiding; see \cite[pg.~25]{Knutson:frob}). Moreover, it was explained that for the Grassmannian cases, one can always use the trick of \emph{parabolic moving}
to reduce to the homogeneous case. This gives an easy solution to
the Grassmannian multiplicity problem, using Kazhdan-Lusztig ideals. Unfortunately,
even for covexillary Schubert varieties, parabolic moving is ineffective for even some small examples.
The approach of this paper avoids this issue, by using more direct arguments.

While this paper focuses on type $A$, our results should have analogues for other Lie types.
Recent papers of A.~Knutson \cite{Knutson:patches, Knutson:frob} point the way towards coordinates and equations for Kazhdan-Lusztig varieties. His papers also explain how to iteratively degenerate these varieties, although the degenerations he considers are not directly applicable in general to the multiplicity problem, since they do not degenerate the
projectivized tangent cone. Finally, we remark that the notion of covexillary for type $B$ has already been examined in
a paper by S.~Billey and T.~K.~Lam \cite{Billey.Lam}.

\subsection{Organization and summary of results}
In Section~2 we recall necessary preliminaries about flag, Schubert
and Kazhdan-Lusztig varieties. In Section~3 we rigorously formulate the our approach towards
multiplicities. This is encapsulated in our initial theorem (Theorem~\ref{thm:basic}).
In Sections~4--6 we turn to the covexillary setting and state our main theorems.
We begin by stating our Gr\"{o}bner basis theorem (Theorem~\ref{thm:standard}) in Section~4. In Section~5, we state our prime
decomposition theorem (Theorem~\ref{thm:prime}) for the initial ideal of the Kazhdan-Lusztig ideal in terms of flagged tableaux and their
bijectively equivalent pipe dreams.
 Section~6 exploits these results to obtain combinatorial and determinantal rules for the
multiplicity and the Hilbert series of the
projectivized tangent cone (Theorems~\ref{thm:flaggedrule},~\ref{thm:detrule} and~\ref{thm:Hilbert} respectively). Section~7 is
devoted to the proofs of the theorems of Sections~4--6. Finally, in Section~8 we return to the general case
and state our conjectures.

\section{Preliminaries}

We recall some notions about the varieties discussed in this article. Our conventions agree with the ones
used in \cite{WYII, WYIII}.

\subsection{Flag and Schubert varieties}
Let $G=GL_n({\mathbb C})$, $B$ be the Borel subgroup of strictly upper triangular matrices,
$T\subset B$ the maximal torus of diagonal matrices,
and $B_{-}$ the corresponding opposite Borel subgroup of strictly lower triangular matrices.
The {\bf complete flag  variety} is $\Flags({\mathbb C}^n):=G/B$.
The fixed points of $\Flags({\mathbb C}^n)$ under
the left action of $T$ are naturally indexed by the symmetric group $S_n$ thanks to its
role as the Weyl group of $G$; we denote these points $e_v$ for $v\in
S_n$.  One has the {\bf Bruhat decomposition}
\[G/B=\coprod_{w\in S_n} Be_wB/B.\]
The {\bf Schubert cell} is the
$B$-orbit $X_{w}^{\circ}:=Be_wB/B$, and its
closure
$X_w:=\overline{X_{w}^{\circ}}$
is the {\bf Schubert variety}.  It is a subvariety of dimension $\ell(w)$,
where $\ell(w)$ is the length of any reduced word of $w$.  Each Schubert variety
$X_w$ is a union of Schubert cells. The {\bf Bruhat order} is the partial order on
$S_n$ defined by declaring that
$v\leq w$ if $X^\circ_v\subseteq X_w$.

Since every point on $X_w$ is in the $B$-orbit of some $e_v$ (for $v\leq w$ in
Bruhat order), the study of local questions on Schubert
varieties reduces to the case of these fixed points.  An affine neighbourhood
of $e_v$ is given by $v\Omega^{\circ}_{id}$, where in general
$\Omega_{u}^{\circ}:=B_{-}uB/B$
is the {\bf opposite Schubert cell}.  Hence to study $X_w$ locally at
$e_v$ one only needs to understand $X_w\cap v\Omega_{id}^{\circ}$. However, by
\cite[Lemma A.4]{Kazhdan.Lusztig}, one has the isomorphism
\begin{equation}
\label{eqn:KL}
X_{w}\cap v\Omega_{id}^{\circ}\cong (X_w\cap \Omega_{v}^{\circ})\times
{\mathbb A}^{\ell(v)}.
\end{equation}

Hence, we study the (reduced and irreducible)
{\bf Kazhdan--Lusztig variety}
\[{\mathcal N}_{v,w}=X_w\cap
\Omega_{v}^{\circ},\]
harmlessly dropping the factor of affine space.

\subsection{Kazhdan-Lusztig ideals}
We now recall coordinates on $\Omega^\circ_v$, and the {\bf Kazhdan--Lusztig
  ideal} $I_{v,w}$ in these coordinates \cite{WYII}.

Let $M_n$ be the set of all $n\times n$ matrices with entries in ${\mathbb
  C}$, with coordinate ring ${\mathbb C}[{\bf z}]$ where ${\bf
  z}=\{z_{ij}\}_{i,j=1}^{n}$ are the coordinate functions on the entries of
a generic matrix $Z$.  We index the matrix
so that $z_{ij}$ is in the $i$-th row from the {\em bottom} of the matrix and
$j$-th column from the left.  Concretely realizing $G$, $B$, $B_{-}$, and
$T$ as invertible, upper triangular, lower triangular, and diagonal matrices
respectively, as explained in \cite{Fulton:YT}, we can think of the
opposite Schubert cell $\Omega_v^\circ$ as an affine subspace of $M_n$.
Specifically, a matrix is in (our realization of) $\Omega_v^\circ$ if, for all
$i$,
\[z_{n-v(i)+1,i}=1, \mbox{\ and \ }
z_{n-v(i)+1,a}=0  \mbox{\ and $z_{b,i}=0$ for $a>i$ and $b>n-v(i)+1$}.\]
Let ${\bf  z}^{(v)}\subseteq {\bf z}$
denote the remaining unspecialized variables, and
$Z^{(v)}$ the specialized generic matrix representing a generic element of
$\Omega_v^\circ$.

Let $Z_{ab}^{(v)}$
denote the southwest $a\times b$ submatrix of $Z^{(v)}$.
Also let
\[R^w=[r_{ij}^{w}]_{i,j=1}^{n}\]
be the {\bf rank matrix} (which we index
similarly) defined by
$$r_{ij}^{w}=\#\{k\ | \ w(k)\geq n-i+1, k\leq j\}.$$

Define the {\bf Kazhdan--Lusztig ideal}
\[I_{v,w}\subseteq {\mathbb C}[{\bf z}^{(v)}]\cong {\rm Fun}[\Omega_{v}^{\circ}]\]
to be the ideal generated by all of the size $1+r_{ij}^{w}$ minors of
$Z_{ij}^{(v)}$ for all $i$ and $j$.

\subsection{Schubert determinantal ideals}
The {\bf Schubert determinantal ideal}
$I_{w}$ is generated by all size $1+r_{ij}^{w}$ determinants of the southwest
$i\times j$ submatrix $Z_{ij}$ of $Z$, for all $i,j$.
It is known that $I_w$ is generated by the smaller set of {\bf essential
determinants} which is the subset of the above generators coming from only
$(i,j)$ in the essential set of $w$ (we recall the definition of the essential set in
Section~4.1). The {\bf matrix Schubert variety} ${\overline X}_w$ is the (reduced and
irreducible) variety in $M_n$ defined by $I_w$.  Matrix Schubert varieties
were introduced in \cite{Fulton:Duke92}. In fact, matrix Schubert varieties can be realized as
special cases of Kazhdan-Lusztig varieties, as seen in \cite{Fulton:Duke92} and recapitulated in
\cite[Section~2.3]{WYIII}.

\subsection{Torus actions} The action of $T\cong ({\mathbb C}^{\star})^n$ on
${\rm Flags}({\mathbb C}^n)$ induces the {\bf usual action}.
This action is the left action of diagonal matrices on $B$-cosets of $G$
written in our coordinates.  The action rescales rows independently and
rescales columns dependently, as upon rescaling a row one must rescale a
corresponding column to ensure there is a $1$ in position $(n-v(j)+1,j)$ (as
read with our upside-down matrix coordinates).  Applying the usual
convention that the homomorphism picking out the $i$-th diagonal entry is the
weight $t_i$ and writing weights additively, this action gives the matrix
entry at $(i,j)$ the weight $t_{n-i+1}-t_{v(j)}$.  The variable $z_{ij}$ is
the coordinate function on this matrix entry and therefore (the torus action
on the variable) has weight
\[{\rm wt}(z_{ij})=t_{v(j)}-t_{n-i+1}\,.\]
Let us call this the {\bf usual action grading}; it is a fact that this is a positive grading (cf. Section~7.3). The Kazhdan-Lusztig ideal $I_{v,w}$ is homogeneous with respect to the usual action grading, since one can easily check that each defining determinant is homogeneous.

\section{Gr\"{o}bner degeneration  and
multiplicity}

Let $\pi$ be a {\bf shuffling}, i.e., an ordering of the variables of ${\mathbb C}[{\bf z}^{(v)}]$ by reading the
rows of $Z^{(v)}$ from left to right and bottom to top, each of the $\ell(w_0v)!$ orderings of the variables
can be identified with a permutation $\pi$ in the symmetric group $S_{\ell(w_0v)}$.
Let $\prec_{v,w,\pi}'$ be the local term order (i.e., one where $z_{ij}\prec_{v,w,\pi}' 1$)
that favors monomials of lowest total degree first, and then
breaks ties lexicographically according to $\pi$.

Rather than using $\prec_{v,w,\pi}'$ directly, we find it more convenient to study a different
term order $\prec_{v,w,\pi}$ on monomials
in ${\mathbb C}[{\bf z}^{(v)}]$, defined as follows. For each $t_i$, define $\phi(t_i)=n+1-i$.
Define the {\bf non-standard degree} ${\rm deg}$ of $z_{ij}$ to be
$$\aligned {\rm deg}(z_{ij}) & =\phi(t_{v(j)})-\phi(t_{n+1-i})\\
& =n+1-i-v(j).\endaligned$$
Also, define the {\bf standard degree} ${\rm deg}'$ by ${\rm
deg}'(z_{ij})=1$. As usual, extend these definitions to monomials
${\bf m}=c\prod_{ij}z_{ij}^{a_{ij}}$ (where $c\in {\mathbb
C}^{\star}$) by
\[{\rm deg}({\bf
m})=\sum_{ij}a_{ij}{\rm deg}(z_{ij})\]
etc., and where ${\rm deg}(c)={\rm
deg}'(c)=0$. Note that ${\rm deg}({\bf m})$ is a ${\mathbb Z}$-graded coarsening of the usual-action
grading of Section~2.4.

Let ${\bf m}_1$ and ${\bf m}_2$ be two monomials in $\mathbb{C}[\mathbf{z}^{(v)}]$.  Define ${\bf m}_1\prec_{v,w,\pi}{\bf m}_2$ if
\begin{itemize}
\item[(a)] ${\rm deg}({\bf m}_1)<{\rm deg}({\bf m}_2)$, or if
\item[(b)] ${\rm deg}({\bf m}_1)={\rm deg}({\bf m}_2)$ and  ${\bf m}_1\prec_{v,w,\pi}'{\bf m}_2$.
\end{itemize}

The statement of the result below also requires the {\bf Stanley-Reisner
correspondence}. This bijectively associates a
squarefree monomial ideal $I\subseteq {\mathbb C}[z_1,\ldots,z_N]$
with a simplicial complex $\Delta_I$ whose vertex set is
$\{1,2,\ldots,N\}$ and whose faces correspond naturally to monomials
\emph{not} in $I$. Conversely, to each such simplicial complex
$\Delta$, there is an associated ideal $I_{\Delta}\subseteq {\mathbb
C}[z_1,\ldots,z_N]$ and {\bf face ring} ${\mathbb
C}[{\Delta}]={\mathbb C}[z_1,\ldots,z_N]/I_{\Delta}$. Our resource for facts about combinatorial
commutaive algebra is the textbook by E.~Miller and B.~Sturmfels \cite{Miller.Sturmfels}; cf. Section~7.3.

We have:

\begin{theorem}
\label{thm:basic}
Let $\pi\in S_{\ell(w_0 v)}$ be a shuffling for ${\mathbb C}[{\bf z}^{(v)}]$.
Then the following holds:
\begin{itemize}
\item[(I)] $\prec_{v,w,\pi}$ is a global term order (i.e., one where $1\prec_{v,w,\pi} z_{ij}$) such that
if $f\in {\mathbb C}[{\bf z}^{(v)}]$ is homogeneous with respect to usual action grading, then
${\rm init}_{\prec_{v,w,\pi}}(f)={\rm init}_{\prec_{v,w,\pi}'}(f)$.
\item[(II)] ${\rm init}_{\prec_{v,w,\pi}}I_{v,w}={\rm init}_{\prec'_{v,w,\pi}}I_{v,w}={\rm init}_{\prec_{v,w,\pi}} T_{v,w}$, where
\[T_{v,w}=\langle {\hat f}: {\hat f} \mbox{\ is the lowest standard degree component of $f\in I_{v,w}$}\rangle\]
defines the ideal of the projectivized tangent cone of ${\mathcal
N}_{v,w}$; $T_{v,w}$ is homogeneous with respect to both standard and usual action gradings.
\item[(III)] ${\rm mult}_{e_v}(X_w)={\rm degree}(T_{v,w}) = {\rm degree}({\rm init}_{\prec_{v,w,\pi}}I_{v,w})$
\item[(IV)] Under the usual action grading,
the Hilbert series for  ${\mathbb C}[{\bf z}^{(v)}]/I_{v,w}$ equals the Hilbert series of
${\mathbb C}[{\bf z}^{(v)}]/{\rm init}_{\prec_{v,w,\pi}}I_{v,w}$.
\item[(V)] If ${\rm init}_{\prec_{v,w,\pi}} I_{v,w}$ is reduced and equidimensional, then ${\rm mult}_{e_v}(X_w)$
equals the number of irreducible components of ${\rm init}_{\prec_{v,w,\pi}} I_{v,w}$, or alternatively,
equals the number of facets
of the Stanley-Reisner simplicial complex $\Delta_{v,w,\pi}$ associated to ${\rm init}_{\prec_{v,w,\pi}} I_{v,w}$.
\item[(VI)] If in addition to the hypothesis of {\rm (IV)}, $\Delta_{v,w,\pi}$ is homeomorphic to a ball
or sphere, then the ${\mathbb Z}$-graded Hilbert series for ${\mathcal O}_{e_v,X_w}$ is given by
\[\sum_{i\geq 0}\dim({\mathfrak m}_{e_v}^i/{\mathfrak m}_{e_v}^{i+1}) t^i=G_{v,w}(t)/(1-t)^{n\choose 2}\]
where
\[G_{v,w}(t)=\sum_{k\geq 0}(-1)^k(1-t)^{\ell(w_0w)+k}\times \#\mbox{\rm \{interior faces of $\Delta_{v,w,\pi}$ of codimension $k$\}}.\]
\end{itemize}
\end{theorem}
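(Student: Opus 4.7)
The six parts should be proved in order, leveraging standard Gr\"{o}bner degeneration theory for (I)--(V) and a Hilbert-series identity for Cohen-Macaulay shellable balls or spheres for (VI). For (I), I would first verify $\prec_{v,w,\pi}$ is global by computing ${\rm deg}(z_{ij})=n+1-i-v(j)\ge 1$ for each variable of ${\bf z}^{(v)}$ (the non-pivot positions satisfy $i\le n-v(j)$). Next, since $\phi(t_k)=n+1-k$ realizes the non-standard grading as a ${\mathbb Z}$-coarsening of the usual-action grading, any usual-homogeneous polynomial has all monomials of equal ${\rm deg}$, so clause (a) of $\prec_{v,w,\pi}$ is vacuous on such polynomials and ${\rm init}_\prec$ agrees with ${\rm init}_{\prec'}$, proving (I). For (II): since $I_{v,w}$ is generated by usual-homogeneous minors it admits a usual-homogeneous Gr\"{o}bner basis, giving the first equality. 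For the identification with $T_{v,w}$, I would argue that for usual-homogeneous $g\in I_{v,w}$ the standard-degree components are individually usual-homogeneous with the same ${\rm deg}$, so clause (b)---inherited from $\prec'$ and thus favoring lowest standard degree---forces ${\rm init}_\prec(g)={\rm init}_\prec(\hat g)$. Applying this to a usual-homogeneous Gr\"{o}bner basis yields a Gr\"{o}bner basis $\{\hat g_i\}$ of $T_{v,w}$ with the same initial ideal, and the homogeneity claims for $T_{v,w}$ follow from this description.

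Parts (III), (IV), and (V) are then comparatively routine. Part (III) combines the definition of Hilbert-Samuel multiplicity as the degree of the projectivized tangent cone with the invariance of projective degree under Gr\"{o}bner degeneration. Part (IV) follows from flatness of the Gr\"{o}bner family under a positive grading, which here is the usual-action grading (cf.\ Section~7.3). For (V), a reduced and equidimensional monomial ideal is the Stanley-Reisner ideal of a pure complex whose minimal primes are coordinate subspaces of degree $1$ in bijection with the facets of $\Delta_{v,w,\pi}$, so the degree equals the facet count.

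The main obstacle is part (VI). The Kazhdan-Lusztig product decomposition provides a local isomorphism ${\mathcal O}_{e_v,X_w}\cong {\mathcal O}_{0,{\mathcal N}_{v,w}}\otimes {\mathcal O}_{{\mathbb A}^{\ell(v)},0}$, whose associated-graded ring factors as a tensor product, contributing $1/(1-t)^{\ell(v)}$ which combines with the Stanley-Reisner Hilbert series (on $\ell(w_0v)$ variables) to yield the denominator $(1-t)^{{n\choose 2}}$. For the numerator $G_{v,w}(t)$, I would begin from the standard face-ring identity $H_{{\mathbb C}[\Delta]}(t)=\sum_{F\in\Delta}t^{|F|}/(1-t)^{|F|}$ and split the sum over interior versus boundary faces. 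Since $\Delta_{v,w,\pi}$ is a shellable ball or sphere it is Cohen-Macaulay by Reisner's criterion, and for a ball the boundary is itself a shellable sphere; an induction on dimension using the short exact sequence $0\to I_{\partial\Delta,\Delta}\to{\mathbb C}[\Delta]\to{\mathbb C}[\partial\Delta]\to 0$ should telescope the boundary contributions into an alternating-sign sum over interior faces of each intermediate complex, producing the $(-1)^k$ signs and $(1-t)^{\ell(w_0w)+k}$ powers. The delicate step is correctly matching the codimension parameter $k$ against the ambient dimensions and verifying that the sphere case specializes properly (empty boundary, all faces interior).
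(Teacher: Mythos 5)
Your proposal is correct in outline, and for (I)--(V) it is essentially the paper's argument: positivity of ${\rm deg}$ gives globality, the coarsening of the usual action grading reduces comparisons of usual-homogeneous polynomials to case (b), and (III)--(V) follow from the tangent-cone interpretation of multiplicity, flatness of the Gr\"{o}bner degeneration under a positive grading, and additivity of degree for a reduced equidimensional monomial ideal. Two remarks. First, in (II) your assertion that $\{\hat g_i\}$ is a Gr\"{o}bner basis of $T_{v,w}$ hides the reverse inclusion ${\rm init}_{\prec_{v,w,\pi}}T_{v,w}\subseteq {\rm init}_{\prec_{v,w,\pi}}I_{v,w}$; one needs the (standard) observation that every element of $T_{v,w}$ that is homogeneous for both gradings is the lowest standard-degree form of a usual-homogeneous element of $I_{v,w}$, after which your identity ${\rm init}(F)={\rm init}(\hat F)$ applies. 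This is exactly the content of the tangent-cone (Mora) machinery the paper invokes, so it is a matter of citation rather than a gap.

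The real divergence is (VI). The paper does not re-derive the numerator identity: it quotes the formula of Knutson--Miller (Subword complexes, Theorem~4.1), namely that for a ball or sphere $\Delta$ the $K$-polynomial of its Stanley--Reisner ring is the alternating sum $\sum_F(-1)^{\dim\Delta-\dim F}\prod_{i\notin F}(1-t)$ over \emph{interior} faces, and then only does the bookkeeping: $\dim\Delta_{v,w,\pi}=\ell(w)-\ell(v)-1$ (this uses equidimensionality from (V)), $\ell(w_0v)+\ell(v)=\binom{n}{2}$, and the Kazhdan--Lusztig factorization contributing $(1-t)^{-\ell(v)}$. You instead propose to prove this interior-face formula from scratch by splitting the face-ring Hilbert series along $0\to I_{\partial\Delta}\to{\mathbb C}[\Delta]\to{\mathbb C}[\partial\Delta]\to 0$ and inducting; that is a genuinely different (more self-contained) route, and it can be made to work since the identity is equivalent to Stanley's reciprocity/Dehn--Sommerville relations for Cohen--Macaulay balls and spheres, but as written it is the weakest link: the ``telescoping'' is asserted rather than carried out, and your claim that the boundary of a shellable ball is a \emph{shellable} sphere is both unjustified (this is a well-known delicate question) and unnecessary --- Reisner's criterion already gives Cohen--Macaulayness of any homology ball or sphere, which is all the hypothesis of (VI) supplies, since shellability is not assumed there. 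Citing the Knutson--Miller formula (or the canonical-module/Gorenstein$^*$ argument behind it) closes this cleanly; otherwise you must actually supply the induction and the codimension bookkeeping you flag as ``delicate.''
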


\begin{proof}

For (I), to check that $\prec_{v,w,\pi}$ is a term order, first, we need to show that it is a total ordering on monomials; and second, that it is multiplicative,
meaning that for monomials ${\bf m}_1,{\bf m}_2,{\bf m}_3$, if ${\bf m}_1\prec_{v,w,\pi}{\bf m}_2$
then ${\bf m}_1{\bf m}_3\prec_{v,w,\pi}{\bf m}_2{\bf m}_3$; and third, that it is Artinian, meaning $1\prec_{v,w,\pi}{\bf m}$ for
all nonunit monomials ${\bf m}$.
Clearly $\prec_{v,w,\pi}$ is a total order. It is also straightforward to check that  $\prec_{v,w,\pi}$ is
multiplicative by considering cases (a) and (b) separately.  To see
that $\prec_{v,w,\pi}$ is Artinian, it suffices to show that ${\rm
deg}(1)<{\rm deg}({\bf m})$ for any nonunit monomial ${\bf m}$, hence
$1\prec_{v,w,\pi} {\bf m}$ by (a). Indeed, note that $\prec_{v,w,\pi}$ is a positive weighting on
monomials: if $z_{ij}$ appears in $Z^{(v)}$ then we must have $i<n+1-v(j)$ by construction.
Hence ${\rm deg}(z_{ij})=n+1-v(j)-i>0$. Finally, if $f$ is homogeneous with respect to the usual action grading, then
the comparison of terms of $f$ falls into case (b) of the definition of $\prec_{v,w,\pi}$ and hence we pick
the initial term according to $\prec_{v,w,\pi}'$.

For (II), the equality ${\rm init}_{\prec_{v,w,\pi}}I_{v,w}={\rm init}_{\prec'_{v,w}}I_{v,w}$ follows from (I) and the
fact that $I_{v,w}$ is an homogeneous ideal with respect to the non-standard degree ${\rm deg}$ (cf.~Section 2.4).
The remaining equality and claim about $T_{v,w}$ holds similarly. 

For (III),  the degree of the projectivized tangent cone of
$e_v$ in $X_w$ as a subscheme of the projectivized tangent space
equals the degree of $T_{v,w}$. Hence we have ${\rm mult}_{e_v}(X_w)={\rm
degree} \ T_{v,w}$. That the latter degree equals ${\rm
init}_{\prec_{v,w,\pi}'} I_{v,w}$ is an application of Mora's tangent
cone algorithm \cite{Mora}. Then apply (II).

(IV) holds since the usual action grading is a positive grading
on monomials in ${\mathbb C}[{\bf z}^{(v)}]$
and it is a general fact that Hilbert series for positively graded modules are preserved under Gr\"{o}bner
degeneration, see, e.g., \cite{Miller.Sturmfels}.

For (V), note that by (II) and
(III), ${\rm mult}_{e_v}(X_w)={\rm degree}({\rm
init}_{\prec_{v,w,\pi}}I_{v,w})$. Hence the first claim follows from the hypothesis
and additivity of degrees. The second half of (V) is a standard translation concerning
Stanley-Reisner simplicial complexes.

To prove (VI), we use the following formula established in
\cite[Theorem~4.1]{Knutson.Miller:subword}: if $\Delta$ is a ball or
a sphere and $S$ is its Reisner-Stanley ring, then the $K$-polynomial is given by
$$K(S,t)=\sum_F(-1)^{\dim\Delta-\dim F}\prod_{i\notin
F}(1-t),$$ where sum over all interior faces of $F$ of $\Delta$.
We now apply this formula to
\[S={\mathbb C}[{\bf z}^{(v)}]/{\rm init}_{\prec_{v,w,\pi}}I_{v,w}={\mathbb C}[{\bf z}^{(v)}]/{\rm init}_{\prec_{v,w,\pi}}T_{v,w}.\]
Now,
 $$\#\{i\ |\ i\notin F\} =\#\{\mbox{variables in the ring
 ${\mathbb C}[{\bf z}^{(v)}]$}\}-\dim F-1=\ell(w_0v)-\dim F-1.$$
Using the
$\mathbb{Z}$-grading, the denominator of the ${\mathbb Z}$-graded Hilbert series
for $S$ is
$$(1-t)^{\#\left\{\mbox{variables in the ring
${\mathbb C}[{\bf z}^{(v)}]$}\right\}}=(1-t)^{\ell(w_0v)}.$$

Then
$${\rm Hilb}(S,t)  =\frac{\sum_F(-1)^{\hbox{codim } F} (1-t)^{\ell(w_0v)-\dim F-1}}{(1-t)^{\ell(w_0v)}}=
\frac{\sum_F(-1)^k (1-t)^{{n\choose 2}-\dim F-1}}{(1-t)^{n\choose
2}}$$
where the sum over the interior faces $F$ and where
$k=\dim\Delta-\dim F$ is the codimension of a face $F$.
Since $\dim
\Delta=\ell(w_0 v)-\ell(w_0w)-1=\ell(w)-\ell(v)-1$, we have
${n\choose 2}-\dim F-1=\ell(v)+\ell(w_0w)+k$.

By (\ref{eqn:KL}),  $e_v$ has a neighborhood in $X_w$ that is
isomorphic to ${\mathcal N}_{v,w}\times \mathbb{C}^{\ell(v)}$. Under
this isomorphism, $e_v$ maps to the point $({\bf 0},\vec 0)\in {\mathcal
N}_{v,w}\times \mathbb{C}^{\ell(v)}$, where ${\bf 0}\in {\mathcal N}_{v,w}$
and $\vec 0 \in\mathbb{C}^{\ell(v)}$. So we have
$${\rm Hilb}(\mathcal{O}_{e_v,X_w},t)={\rm Hilb}(\mathcal{O}_{{\bf 0},\mathcal{N}_{v,w}}, t)\cdot\frac{1}{(1-t)^{\ell(v)}}.$$
Meanwhile, the tangent cone of ${\mathcal N}_{v,w}$ at ${\bf 0}$ is ${\rm
Spec}(\mathbb{C}[{\bf z}^{(v)}]/T_{v,w})$, so \[{\rm
Hilb}(\mathcal{O}_{{\bf 0},\mathcal{N}_{v,w}}, t) = {\rm
Hilb}(\mathbb{C}[{\bf z}^{(v)}]/T_{v,w}, t)\] and therefore
$${\rm Hilb}(\mathcal{O}_{e_v,X_w},t)={\rm Hilb}(\mathbb{C}[{\bf
z}^{(v)}]/T_{v,w}, t)\cdot\frac{1}{(1-t)^{\ell(v)}}.$$
Combining these facts, the Hilbert series of the
local ring $\mathcal{O}_{e_v,X_w}$ is
$$\frac{\sum_F(-1)^k (1-t)^{\ell(v)+\ell(w_0w)+k}}{(1-t)^{n\choose
2}}\cdot\frac{1}{(1-t)^{\ell(v)}}=\frac{\sum_F(-1)^k
(1-t)^{\ell(w_0w)+k}}{(1-t)^{n\choose 2}}.$$ Now (VI) immediately follows.
\end{proof}

Since by (II), $T_{v,w}$ is homogeneous with respect to the standard and usual action grading, we remark 
it is not hard to compute the multigraded Hilbert series of ${\mathcal O}_{e_v,X_w}$, for the combined multigrading, with a similar argument as in the
proof of (VI) (replacing the
$\#\mbox{\rm \{interior faces of $\Delta_{v,w,\pi}$ of codimension $k$\}}$ by a Laurent polynomial in $t_1,\ldots,t_n$). 

We need a few more definitions for future reference: We are mainly interested when
${\rm init}_{\prec_{v,w,\pi}}I_{v,w}$ defines a reduced and equidimensional scheme, at which point we
consider its prime decomposition
\[{\rm init}_{\prec_{v,w,\pi}}I_{v,w}=\bigcap J_i\]
where each $J_i=\langle z_{a_1,b_1},\ldots, z_{a_m,b_m}\rangle$. Define the {\bf shuffled generic matrix}
${\widetilde{Z^{(v)}}}$ by starting with $Z^{(v)}$ and reading the rows left to right and bottom to top,
replacing the $k$-th variable in this reading by the $k$ variable of $\pi$. Now define the {\bf $\pi$-shuffled tableau}
associated to $J_i$ to be a filling of the $n\times n$ grid where a $+$ is placed in the positions
of $z_{a_1,b_1},\ldots z_{a_m,b_m}$ of ${\widetilde{Z^{(v)}}}$. These tableaux are closely
related to (and in fact generalize for special choices of $v,w,\pi$) the pipe dreams of \cite{Fomin.Kirillov} as geometrically interpreted by \cite{Knutson.Miller}, and as we will see, they also generalize (flagged) semistandard Young tableaux.

Two remarks about $\pi$-shuffled tableaux are in order. First, strictly speaking, there is no need to shuffle the coordinates
to write down \emph{some} combinatorial object which labels a prime component of ${\rm init}_{\prec_{v,w,\pi}}I_{v,w}$.
However, in the covexillary case, as well in what we surmise about \cite{Knutson.Miller, KMY, WYIII},
it seems that the $\pi$-shuffling converts otherwise weird subsets of $n\times n$ into coherent combinatorics. It is for this
reason that we propose using this transformation in general. Second, in view of the connection to pipe dreams,
it is also plausible to call these objects ``$\pi$-shuffled pipe dreams''. However, at present we do not know of any way
in general to add elbows\ \ \ \ $\jr$ \ \ \ to the positions of $n\times n$ not filled by $+$'s that would generate reasonable strand diagrams
as in \cite{Fomin.Kirillov} that would justify the ``pipe dream'' name (as first introduced in \cite{Knutson.Miller}).

Theorem~\ref{thm:basic} is most likely combinatorially useful
when the limit is reduced and equidimensional. Conjecturally,
there is some term order $\prec_{v,w,\pi}$ such that this is true.
Therefore, multiplicity would be
counted by the inherently combinatorial object $\Delta_{v,w,\pi}$. With this in mind,
the choice of coordinates and equations for the Kazhdan-Lusztig variety is not arbitrary.
Indeed, whether a variety can be Gr\"{o}bner degenerated to a reduced scheme is
embedding dependent. For example, the only two Gr\"{o}bner degenerations of
${\rm Spec}\left({\mathbb C}[x,y]/(x^2-y^2)\right)$ give multiplicity $2$ lines. However, after
the linear change of coordinates $u=x-y, v= x+y$, we arrive at the ${\rm Spec}\left({\mathbb C}[u,v]/(uv)\right)$
which is already a reduced union of coordinate subspaces and hence equal to any of its Gr\"{o}bner limits.

We will discuss the aforementioned conjecture in more specific detail in Section~8.
In the interim, we prove this conjecture in the covexillary case.

\section{A Gr\"{o}bner basis for covexillary Kazhdan-Lusztig ideals}

We now begin our application of Theorem~\ref{thm:basic} to covexillary Schubert
varieties. In this section, we pick $\prec_{v,w,\pi}$
so that the hypotheses of (V) and (VI) of the theorem hold. We then prove a Gr\"{o}bner basis
theorem for this term order that explicates the degeneration.

\subsection{Diagrams, essential sets and covexillary permutations}
We recall some combinatorics of the symmetric group.
Proofs can be found in, e.g., \cite[Chapter~2]{Manivel}.

We give coordinates to the ambient $n\times n$ grid so that $(1,1)$ refers to
the southwest corner, $(n,1)$ refers to the northwest corner, and so on.
To each $w\in S_n$, the {\bf Rothe diagram} $D(w)$ is the following subset of
the $n\times n$ grid:
\begin{equation}
\label{eqn:diagramdef}
D(w)=\{(i,j): i<n-w(j)+1 \mbox{\ and \ } j<w^{-1}(n-i+1)\}.
\end{equation}
Alternatively, this set is described as follows.  Place a dot $\bullet$ in
position $(n-w(j)+1,j)$ for $1\leq j\leq n$. For each dot draw the ``hook''
that extends to the right and above that dot. The boxes that are not in
any hook are the boxes of $D(w)$. We emphasize that
the {\bf graph} of $w$ is given by the
positions of the $\bullet$'s in position $(n-w(j)+1,j)$ (i.e., $w(j)$ units from the top)
because of our indexing conventions.

The {\bf essential set} $\Ess(w)$ can be described as the set of those
boxes which are on the northeast edge of some connected components of $D(w)$.
To be precise,
\begin{equation}
\label{eqn:essentialsetprecisely}
(i,j)\in \Ess(w) \mbox{\ \ \ if \  } (i,j)\in D(w)  \mbox{\ but both \ }
(i+1,j)\not\in D(w) \mbox{\  and \ } (i,j+1)\not\in D(w).
\end{equation}

\begin{proposition}
\label{prop:ess_unique}
A permutation $w\in S_n$ is uniquely determined by its diagram and the restriction of
the rank matrix $R^w=[r_{ij}^w]_{i,j=1}^n$ to its essential set.
\end{proposition}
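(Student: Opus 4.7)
The plan is to prove the stronger assertion that the Rothe diagram $D(w)$ alone determines $w$; the proposition then follows a fortiori, since the stated hypothesis provides strictly more data. My approach is an inductive, column-by-column recovery of the values $w(1), w(2), \ldots, w(n)$ directly from $D(w)$.

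For the base case $j=1$, I would use \eqref{eqn:diagramdef} to observe that column~$1$ of $D(w)$ consists exactly of the rows strictly below the dot at $(n-w(1)+1,1)$. Indeed, the second condition $1 < w^{-1}(n-i+1)$ is automatic whenever $i < n-w(1)+1$, since then $n-i+1 > w(1)$ forces $w^{-1}(n-i+1) \geq 2$. Counting the boxes of $D(w)$ in column~$1$ therefore recovers $w(1) = n - |D(w) \cap (\text{column }1)|$.

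For the inductive step, suppose $w(1), \ldots, w(j-1)$ have been recovered. I would then analyze the complement of $D(w)$ in column $j$. By \eqref{eqn:diagramdef}, a box $(i,j)$ with $i < n-w(j)+1$ lies outside $D(w)$ precisely when $w^{-1}(n-i+1) \leq j$; the equality case is ruled out, because $i < n-w(j)+1$ forces $n-i+1 > w(j)$. Hence the rows of column $j$ outside $D(w)$ and strictly below the column-$j$ dot are exactly $\{n-w(k)+1 : k<j,\ w(k)>w(j)\}$, all of which are known by induction. Removing the full list of previously-located dot rows $\{n-w(k)+1 : 1 \leq k < j\}$ from the complement of $D(w)$ in column $j$ therefore leaves only the dot row $n-w(j)+1$ together with rows strictly above it; its minimum recovers $w(j)$. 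Note that this subtraction cannot accidentally discard the column-$j$ dot row, because $w$ is a permutation and so $w(j) \neq w(k)$ for $k<j$.

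The main technical point — more of a bookkeeping step than a genuine obstacle — is the careful translation between the set-theoretic condition \eqref{eqn:diagramdef} and the pictorial description of hooks used above, and the verification that the hook-continuations from earlier dots account for all the non-$D(w)$ rows below the column-$j$ dot. Once this recovery is verified, $D(w)$ alone determines $w$; in particular the weaker input consisting of $D(w)$ together with $\{r^w_{ij} : (i,j) \in \Ess(w)\}$ certainly suffices, proving the proposition. (An alternative route, if one insists on using the rank data, would be to recall that $R^w$ is determined by its restriction to $\Ess(w)$ via standard propagation rules, and that $R^w$ in turn determines the dot locations through rank-jump conditions; but this uses heavier machinery than needed.)
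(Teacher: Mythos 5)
Your induction is correct. In column $j$, the boxes strictly below the dot that fail to lie in $D(w)$ are, by \eqref{eqn:diagramdef}, exactly the rows $n-w(k)+1$ with $k<j$ and $w(k)>w(j)$, as you check; deleting all previously located dot rows therefore removes every row of the complement lying below the dot and cannot remove the dot row itself (since $w(j)\neq w(k)$ for $k<j$), so the minimum of what survives is $n-w(j)+1$ and $w(j)$ is recovered. Together with the base case $w(1)=n-\#\{\mbox{boxes of }D(w)\mbox{ in column }1\}$, this shows $D(w)$ alone determines $w$, and the proposition as printed follows a fortiori.

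The paper gives no argument of its own, citing Manivel, Chapter~2, where the statement (going back to Fulton) is proved in a sharper form: $w$ is determined by $\Ess(w)$ together with the ranks $r^w_{\mathfrak e}$ for ${\mathfrak e}\in \Ess(w)$, with no diagram data at all; one checks that the essential rank conditions force the whole rank matrix $R^w$, whose unit jumps locate the dots of $w$. Your route is more elementary and entirely self-contained, but it buys only the literal (weak) reading of the proposition, in which the rank data plays no role. Be aware that this weak reading is not what the authors actually use: in the proof of Definition-Lemma~\ref{deflemma:Theta} the proposition is invoked to conclude that $\Theta_{v,w}$ is uniquely determined by its essential set, the ranks on it, and the shape $\lambda(w)$ --- the full diagram of $\Theta_{v,w}$ is not known in advance there. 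So your proof establishes the statement as stated, but it could not replace the cited result at the point where the proposition is applied; for that one needs the essential-set version, which is precisely the ``heavier'' alternative you mention in your closing parenthesis.
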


\begin{proposition}
\label{prop:bruhat}
Permutations $v,w\in S_n$ satisfy $v\leq w$ (in Bruhat order) if and only if
$r_{ij}^{v}\leq r_{ij}^{w}$ for all $(i,j)\in n\times n$.
\end{proposition}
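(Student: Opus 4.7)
The plan is to reduce the equivalence to two ingredients: the classical rank-condition description of the Schubert variety $X_w$, and the observation that the rank matrix $R^v$ records precisely the rank pattern of the $T$-fixed point $e_v$.

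First, I would invoke the classical fact (due to Fulton \cite{Fulton:Duke92}, and implicit in the definition of the Schubert determinantal ideal $I_w$ in Section~2.3) that $X_w$ is cut out set-theoretically by the rank conditions $\mathrm{rank}(Z_{ij}) \leq r^w_{ij}$ for every $(i,j)$ in the $n\times n$ grid, where $Z_{ij}$ is the southwest $i\times j$ submatrix of any matrix representative $Z$ of a flag.

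Second, I would compute the rank pattern at $e_v$ directly from the definitions. The permutation matrix representing $e_v$ has a single nonzero entry in position $(n-v(k)+1,k)$ for each $k$, so the southwest $i\times j$ submatrix has rank exactly
\[
\#\{k \leq j : n-v(k)+1 \leq i\} = \#\{k \leq j : v(k) \geq n-i+1\} = r^v_{ij},
\]
which matches the definition of $r^v_{ij}$ stated just before the proposition.

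Third, I would assemble these ingredients. By the paper's own definition of Bruhat order, $v \leq w$ iff $X^\circ_v \subseteq X_w$. Since $X_w$ is $B$-invariant and $X^\circ_v = B\cdot e_v$, this containment is equivalent to the single condition $e_v \in X_w$. By the first ingredient, $e_v \in X_w$ iff $e_v$ satisfies every rank inequality defining $X_w$, and by the second ingredient this translates exactly to $r^v_{ij} \leq r^w_{ij}$ for all $(i,j)$. The only real obstacle is invoking the rank-condition description of $X_w$; this is entirely classical, and once cited the rest is a direct unwinding of definitions.
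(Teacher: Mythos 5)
Your argument is correct as far as it goes, but it is worth being clear about where the content lies. The paper does not prove Proposition~\ref{prop:bruhat} at all: it is recalled as a classical fact with a pointer to \cite[Chapter~2]{Manivel}, so there is no internal proof to compare against. Your reduction is the standard geometric one and each step checks out in the paper's conventions: the ranks of southwest submatrices are invariant under right multiplication by $B$, so the conditions descend to $G/B$; the permutation matrix for $e_v$ has its nonzero entries at $(n-v(k)+1,k)$, so its southwest $i\times j$ rank is exactly $r^v_{ij}$; and since $X_w$ is $B$-stable and $X^\circ_v=B\cdot e_v$, the containment $X^\circ_v\subseteq X_w$ is indeed equivalent to $e_v\in X_w$. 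The one caveat is that all of the difficulty is concentrated in the classical ingredient you cite, namely that $X_w$ is \emph{set-theoretically} equal to the locus where ${\rm rank}(Z_{ij})\leq r^w_{ij}$ for all $(i,j)$. Semicontinuity gives the inclusion $X_w\subseteq\{{\rm rank}(Z_{ij})\leq r^w_{ij}\}$ cheaply, but the reverse inclusion is exactly where the nontrivial direction of the proposition (``$r^v\leq r^w$ entrywise implies $e_v\in X_w$'') hides, so you must cite a source that proves the rank description independently (as Fulton \cite{Fulton:Duke92} and Manivel do, via irreducibility/dimension arguments for the rank locus) rather than one that derives it \emph{from} the Bruhat-order criterion; otherwise the argument is circular. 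With that citation made precise, your proof is a clean unwinding of definitions, and in spirit it matches how the fact is established in the references the paper points to.
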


\begin{defthm}
\label{defthm:covex}
The following are equivalent for a permutation $w\in S_n$:
\begin{itemize}
\item[(i)] $w$ is {\bf covexillary}\footnote{In \cite{Manivel} (and other sources) one instead considers \emph{vexillary}
permutations, which are equal to $w_0w$ where $w$ is covexillary and $w_0$ is the longest length
element of $S_n$. The results we use therefore only differ by a change in conventions.};
\item[(ii)] $w$ is $3412$-avoiding, i.e., there do not exist $1\leq i_1<i_2<i_3<i_4\leq n$ such that
$w(i_3)<w(i_4)<w(i_1)<w(i_2)$;
\item[(iii)] the boxes of the essential set of $w$ lie on a piecewise linear curve oriented weakly southeast to northwest; and
\item[(iv)] the diagram $D(w)$, up to a permutation of the rows and the columns gives a Young diagram.
\end{itemize}
\end{defthm}
If (iv) holds, then in fact the Young diagram $\lambda=\lambda(w)$ is unique, and we will refer to this as
the {\bf shape} of the covexillary permutation $w$.

\subsection{The Gr\"{o}bner basis theorem}\label{subsection: The Mora basis
theorem} We now give our central definition, the ordering of
variables $\pi\in S_{\ell(w_0v)}$ that we use in the main results of this section
and the next.

We say that the box $(x,y)$ is {\bf
dominated by $(w,z)$} if $x\le w$ and $y\le z$, i.e.,  if
$(x,y)$ lies in the rectangular region with $(w,z)$ and $(1,1)$ as
its northeast and southwest corners, respectively.

Let $\lambda=\lambda(w)=(\lambda_1\geq \lambda_2\geq\cdots\geq
\lambda_{\ell}>0)$ be as in Section~4.1. For $1\le i\le \ell$, let
\begin{equation}
\label{eqn:deletedspots}
\left\{(\alpha^{(i)}_1,\beta^{(i)}_1),\dots,(\alpha^{(i)}_{k_i},\beta^{(i)}_{k_i})\right\}
\end{equation}
be the coordinates of those 1's in $Z^{(v)}$ that are dominated by
$(b_i, \lambda_i-i+b_i)$. Here $b_i$ is defined as follows:
Let $B(w)$ be the smallest Young diagram (drawn in French notation) with corner in
position $(1,1)$ that contains all of $\Ess(w)$. Then set
\[b_i=\max_{m}\{B(w)_{m}\geq
\lambda(w)_{i}+m-i\}.\]
Observe that
\begin{equation}
\label{eqn:someineq}
b_1\leq b_2\leq \cdots \leq b_{\ell} \mbox{\ and \ } \lambda_1-1+b_1\geq \lambda_2-2+b_2\geq\cdots \geq \lambda_{\ell}-\ell+b_{\ell}.
\end{equation}
(In Section~5.2, $B(w)$ and $b_i$ will be pictorially motivated and utilized.)

By definition,
$\alpha^{(i)}_j=n+1-v(\beta^{(i)}_j)$ for $1\le j\le k_i$. Define
$$\aligned R_i &=\{1,2,\dots,b_i\}\setminus\{\alpha^{(i)}_1,\dots,\alpha^{(i)}_{k_i}\},\\
C_i & =\{1,2,\dots,\lambda_i-i+b_i\}\setminus\{\beta^{(i)}_1,\dots,\beta^{(i)}_{k_i}\},\endaligned$$
and set $R_0=\emptyset$, $R_{\ell+1}=\{1,\dots,n\}$,
$C_0=\{1,\dots,n\}$, $C_{\ell+1}=\emptyset$. From (\ref{eqn:someineq}) we have the filtrations of $\{1,2,\ldots,n\}$:
\[R_0\subseteq R_{1}\subseteq\cdots \subseteq R_{\ell+1} \mbox{\ and \ }  C_{\ell+1}\supseteq C_{\ell}\supseteq\cdots\supseteq C_0.\]
For $0\le i\le \ell$, set
$$R_{i+1}-R_i=\{r^{(i)}_1<\dots<r^{(i)}_{p_i}\}$$  and thus we can define
$\rho\in S_n$ to be the following permutation (written in one-line
notation):
$$\rho:=r^{(0)}_1\cdots r^{(0)}_{p_0} \  r^{(1)}_1 \cdots r^{(1)}_{p_1}
\cdots\  \cdots r^{(\ell)}_1 \cdots r^{(\ell)}_{p_\ell}\in S_n.$$
Similarly, for $0\le i\le \ell$, set
$$C_i-C_{i+1}=\{c^{(i)}_1<\dots<c^{(i)}_{q_i}\}$$
 and let $\chi\in S_n$ be the
following permutation:
$$\chi:=c^{(\ell)}_1 \cdots c^{(\ell)}_{q_\ell} \
c^{(\ell-1)}_1 \cdots c^{(\ell-1)}_{q_{\ell-1}}\ \cdots \ \cdots
c^{(0)}_1 \cdots c^{(0)}_{q_0}\in S_n.$$

 Let ${\widetilde Z}$ be the {\bf shuffled generic matrix} obtained by reordering the rows of the generic matrix
 by $\rho$ and the columns by $\chi$ (cf. Section~3). 

Let $\prec_{v,w,\pi}$ be the term order defined in Section~3, using the
 ordering of
 variables $\pi$ obtained by
 reading the rows of ${\widetilde Z}$ left to right, and from bottom to top.
 Strictly speaking,
 we have defined $\prec_{v,w,\pi}$ as a term order on all monomials in ${\mathbb C}[{\bf z}]$, which we restrict, in the
 obvious way, to one
for monomials in ${\mathbb C}[{\bf z}^{(v)}]$.

The ideal $I_{v,w}$ is known to be generated by a smaller set of generators, i.e., the {\bf essential minors}
which are the $r_{i,j}^w+1$ minors of $Z_{ij}^{(v)}$ for all $(i,j)\in \Ess(w)$, see \cite{WYII} and the references therein.

Our main result is:

\begin{theorem}
\label{thm:standard}
The essential minors of $I_{v,w}$ form a Gr\"{o}bner
basis with respect to the term order $\prec_{v,w,\pi}$.
\end{theorem}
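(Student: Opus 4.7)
The plan is to reduce the Gröbner basis claim to the known theorem of \cite{KMY} for covexillary matrix Schubert varieties. As hinted in the introduction, the shuffling data $(\rho, \chi)$ constructed in Section~4.2 should be chosen so that, in the reordered coordinates of ${\widetilde Z}$, the Kazhdan-Lusztig variety ${\mathcal N}_{v,w}$ (possibly after crossing with an affine space) is isomorphic to an open subscheme of the matrix Schubert variety ${\overline X}_{\tilde w}$ for a covexillary permutation $\tilde w$ built from $\lambda(w)$, the $b_i$'s, and the dominated-$1$ coordinates $\{(\alpha^{(i)}_j,\beta^{(i)}_j)\}$. The specialized entries in $Z^{(v)}$ (the $1$'s and their associated $0$'s) should land along the northeast staircase of the regions $R_i \times C_i$ in the shuffled matrix, which is precisely where the graph of $\tilde w$ sits.

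Given such a $\tilde w$, I would carry out the proof in four steps. First, explicitly define $\tilde w$ from the combinatorial data above and verify it is covexillary using the characterization in Definition--Theorem~\ref{defthm:covex}. Second, identify the essential minors of $I_{v,w}$ (indexed by $\Ess(w)$) with essential minors of $I_{\tilde w}$ (indexed by $\Ess(\tilde w)$), the correspondence being induced by $(\rho,\chi)$ together with the substitution of the specialized entries as constants. Third, observe that the ordering of variables $\pi$ (read left-to-right, bottom-to-top on ${\widetilde Z}$) matches the antidiagonal reading order used in the Gröbner basis theorem of \cite{KMY}; combined with Theorem~\ref{thm:basic}(I), which shows $\prec_{v,w,\pi}$ and $\prec'_{v,w,\pi}$ agree on elements homogeneous for the usual action grading, this lets us transfer the Gröbner basis property from $I_{\tilde w}$ to $I_{v,w}$. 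Fourth, conclude that the essential minors of $I_{v,w}$ form a Gröbner basis under $\prec_{v,w,\pi}$, with squarefree leading terms coming from antidiagonals of the corresponding minors of $I_{\tilde w}$.

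The main obstacle will be executing the bookkeeping of the correspondence in the second step and verifying the leading-term compatibility in the third. One must track how the rank condition $r_{ij}^w$ at each essential box translates, after the coordinates are shuffled and the dominated $1$'s and $0$'s are substituted, into the rank condition defining $\tilde w$ at a corresponding box. This is exactly where the definition of $b_i$ via the smallest bounding Young diagram $B(w)$ and the filtration inequalities (\ref{eqn:someineq}) do the real combinatorial work: they guarantee that the shuffled generic matrix has its constant entries lined up in a staircase compatible with $\tilde w$'s essential set, so that excising the constant rows and columns carries essential minors of $I_{v,w}$ bijectively onto essential minors of $I_{\tilde w}$ of matching size.

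Once these compatibilities are pinned down, the conclusion that initial terms of shuffled essential minors are squarefree monomials generating ${\rm init}_{\prec_{v,w,\pi}} I_{v,w}$ should follow immediately from the \cite{KMY} Gröbner basis theorem, completing the proof.
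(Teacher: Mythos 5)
Your overall instinct---that the shuffling $(\rho,\chi)$ is designed so that, after striking out the rows and columns of the specialized $1$'s, the essential minors of $I_{v,w}$ produce the antidiagonal lead terms of the essential minors of a covexillary Schubert determinantal ideal (namely $\widetilde I_{\Theta_{v,w}}$ from Definition--Lemma~\ref{deflemma:Theta}), so that \cite{KMY} can be invoked---matches the paper. But your second and third steps contain a genuine gap. The essential minors of $I_{v,w}$ are \emph{not} identified with essential minors of $I_{\tilde w}$ by shuffling and ``substituting the specialized entries as constants'': the row of a $1$ in $Z^{(v)}$ has zeroes only to its right, so expanding a minor of $Z^{(v)}_{\mathfrak e}$ along that row produces the complementary minor \emph{plus} extra terms of other standard degrees. (Indeed $I_{v,w}$ is in general inhomogeneous for the standard grading while $\widetilde I_{\Theta_{v,w}}$ is homogeneous, so the two ideals cannot coincide up to irrelevant variables and a coordinate permutation; your claim that ${\mathcal N}_{v,w}$ sits as an open subscheme of $\overline X_{\tilde w}$ compatibly with these coordinates is not available---the introduction's statement that the \emph{limit} schemes agree is a consequence of Theorems~\ref{thm:standard} and~\ref{thm:prime}, not an input.) Consequently, the Gr\"{o}bner basis property cannot simply be ``transferred'': what the lead-term correspondence actually gives is only the containment $H_{v,w}\subseteq J_{v,w}\subseteq {\rm init}_{\prec_{v,w,\pi}} I_{v,w}$, where $J_{v,w}$ is generated by the initial terms of the essential minors and $H_{v,w}$ is the (reduced, equidimensional) KMY initial ideal pushed into ${\mathbb C}[{\bf z}^{(v)}]$; this is the paper's Lemma~\ref{lemma:thesame}. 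The substance of the theorem is the reverse containment ${\rm init}_{\prec_{v,w,\pi}} I_{v,w}\subseteq J_{v,w}$, i.e., that no other element of $I_{v,w}$ contributes a new initial term, and nothing in your steps addresses this; the Gr\"{o}bner theorem of \cite{KMY} concerns a different ideal and says nothing directly about such elements.

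The paper closes this gap with a multidegree argument rather than an isomorphism: by Theorem~\ref{thm:basic}(IV) the multidegree of ${\mathbb C}[{\bf z}^{(v)}]/{\rm init}_{\prec_{v,w,\pi}}I_{v,w}$ equals that of ${\mathcal N}_{v,w}$; Proposition~\ref{proposition:multidegreesagree} (the real combinatorial work, an identity of flagged double Schur functions $s_{\lambda,{\bf b}}(Y-X)=s_{\lambda,{\bf b}''}(Y'-X')$ proved via the auxiliary flagging ${\bf b}'$ and Lemmas~\ref{lemma:bfact1},~\ref{lemma:bfact2}) shows this equals the multidegree of $H_{v,w}$; and since $H_{v,w}$ is radical and equidimensional and everything is homogeneous for the positive usual-action grading, Lemma~\ref{lemma:KM} of \cite{Knutson.Miller} forces $H_{v,w}=J_{v,w}={\rm init}_{\prec_{v,w,\pi}}I_{v,w}$. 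If you want to salvage your outline, you must replace ``transfer the Gr\"{o}bner basis property'' by an argument of this kind (or some other upper bound on ${\rm init}_{\prec_{v,w,\pi}}I_{v,w}$, e.g.\ a Hilbert-series or dimension comparison); the bookkeeping you describe in your second step is necessary but only yields the easy inclusion.
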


\begin{example}
\label{exa:7531462}
Let $w=7531462$, $v=5123746$ (in one line notation). Then $w$ is covexillary, $\lambda(w)=(4,2,1)$, the Rothe diagram
 $D(w)$ and the matrix of variables $Z^{(v)}$ are given by the following figure.
\begin{figure}[h]\label{figure1}
\begin{picture}(350,105)
\put(-10,50){$D(w)=$}
\put(37.5,0){\makebox[0pt][l]{\framebox(105,105)}}
\put(52.5,15){\line(1,0){60}}
\put(52.5,30){\line(1,0){60}}
\put(52.5,15){\line(0,1){15}}
\put(67.5,15){\line(0,1){15}}
\put(82.5,15){\line(0,1){15}}
\put(97.5,15){\line(0,1){15}}
\put(112.5,15){\line(0,1){15}}
\put(100,20){${\mathfrak e}_1$}
\put(67.5,45){\line(1,0){30}}
\put(67.5,60){\line(1,0){30}}
\put(67.5,45){\line(0,1){15}}
\put(82.5,45){\line(0,1){15}}
\put(97.5,45){\line(0,1){15}}
\put(85,50){${\mathfrak e}_2$}
\put(82.5,75){\line(1,0){15}}
\put(82.5,90){\line(1,0){15}}
\put(82.5,75){\line(0,1){15}}
\put(97.5,75){\line(0,1){15}}
\put(85,80){${\mathfrak e}_3$}
\thicklines
\put(45,7.5){\circle*{4}}
\put(45,7.5){\line(1,0){97.5}}
\put(45,7.5){\line(0,1){97.5}}
\put(60,37.5){\circle*{4}}
\put(60,37.5){\line(1,0){82.5}}
\put(60,37.5){\line(0,1){67.5}}
\put(75,67.5){\circle*{4}}
\put(75,67.5){\line(1,0){67.5}}
\put(75,67.5){\line(0,1){37.5}}
\put(90,97.5){\circle*{4}}
\put(90,97.5){\line(1,0){52.5}}
\put(90,97.5){\line(0,1){7.5}}
\put(105,52.5){\circle*{4}}
\put(105,52.5){\line(1,0){37.5}}
\put(105,52.5){\line(0,1){52.5}}
\put(120,22.5){\circle*{4}}
\put(120,22.5){\line(1,0){22.5}}
\put(120,22.5){\line(0,1){82.5}}
\put(135,82.5){\circle*{4}}
\put(135,82.5){\line(1,0){7.5}}
\put(135,82.5){\line(0,1){22.5}}
\put(200,50){$Z^{(v)}=\left(\begin{matrix}
0 & 1 & 0 & 0 &0 &0 &0\\
0 & z_{62} & 1 & 0 & 0 &0 &0\\
0 & z_{52} & z_{53} & 1 & 0 &0 &0\\
0 & z_{42} & z_{43} & z_{44} & 0 &1 &0\\
1 & 0 & 0 & 0 & 0 &0 &0\\
z_{21} & z_{22} & z_{23} & z_{24} & 0 &z_{26} &1\\
z_{11} & z_{12} & z_{13} & z_{14} & 1 &0 &0\\
\end{matrix}\right)$}
\end{picture}
\end{figure}

The essential set consists of $3$ boxes ${\mathfrak e}_1=(2,5)$, ${\mathfrak e}_2=(4,4)$, ${\mathfrak e}_3=(6,4)$.
The Kazhdan-Lusztig ideal is generated by all
$2\times 2$ minors of $Z^{(v)}_{{\mathfrak e}_1}$, all $3\times 3$ minors of $Z^{(v)}_{{\mathfrak e}_2}$ and all
$4\times 4$ minors of $Z^{(v)}_{{\mathfrak e}_3}$.
\[I_{5123746, 7531462}=\left\langle
\left|\begin{matrix}
z_{21} & z_{22}\\
z_{11} & z_{12}
\end{matrix}\right|, \
\dots,
\left|\begin{matrix}
1 & 0 & 0\\
z_{21} & z_{22} & z_{23}\\
z_{11} & z_{12} & z_{13}
\end{matrix}\right|, \
\dots,
\left|\begin{matrix}
0 & z_{42} & z_{43} & z_{44}\\
1& 0& 0& 0\\
z_{21} & z_{22} & z_{23} &z_{24}\\
z_{11} & z_{12} & z_{13} & z_{14}
\end{matrix}\right|, \
\dots
\right\rangle.
\]

In this example, $R_1=\{2\}$, $R_2=\{1,2,4\}$, $R_3=\{1,2,4\}$, therefore $\rho=2143567\in S_7$. Similarly, $C_1=\{1,2,3,4\}$, $C_2=\{2,3,4\}$, $C_3=\{2\}$, hence $\chi=2341567\in S_7$. Thus we have the shuffled generic matrix
\[{\widetilde Z}=\left(\begin{matrix}
\tilde{z}_{71} & \tilde{z}_{72} & \tilde{z}_{73} & \tilde{z}_{74} & \tilde{z}_{75} & \tilde{z}_{76} & \tilde{z}_{77}\\
\tilde{z}_{61} & \tilde{z}_{62} & \tilde{z}_{63} & \tilde{z}_{64} & \tilde{z}_{65} & \tilde{z}_{66} & \tilde{z}_{67}\\
\tilde{z}_{51} & \tilde{z}_{52} & \tilde{z}_{53} & \tilde{z}_{54} & \tilde{z}_{55} & \tilde{z}_{56} & \tilde{z}_{57}\\
\tilde{z}_{41} & \tilde{z}_{42} & \tilde{z}_{43} & \tilde{z}_{44} & \tilde{z}_{45} & \tilde{z}_{46} & \tilde{z}_{47}\\
\tilde{z}_{31} & \tilde{z}_{32} & \tilde{z}_{33} & \tilde{z}_{34} & \tilde{z}_{35} & \tilde{z}_{36} & \tilde{z}_{37}\\
\tilde{z}_{21} & \tilde{z}_{22} & \tilde{z}_{23} & \tilde{z}_{24} & \tilde{z}_{25} & \tilde{z}_{26} & \tilde{z}_{27}\\
\tilde{z}_{11} & \tilde{z}_{12} & \tilde{z}_{13} & \tilde{z}_{14} & \tilde{z}_{15} & \tilde{z}_{16} & \tilde{z}_{17}\\
\end{matrix}\right)=
\left(\begin{matrix}
z_{72} & z_{73} & z_{74} & z_{71} & z_{75} & z_{76} & z_{77}\\
z_{62} & z_{63} & z_{64} & z_{61} & z_{65} & z_{66} & z_{67}\\
z_{52} & z_{53} & z_{54} & z_{51} & z_{55} & z_{56} & z_{57}\\
z_{32} & z_{33} & z_{34} & z_{31} & z_{35} & z_{36} & z_{37}\\
z_{42} & z_{43} & z_{44} & z_{41} & z_{45} & z_{46} & z_{47}\\
z_{12} & z_{13} & z_{14} & z_{11} & z_{15} & z_{16} & z_{17}\\
z_{22} & z_{23} & z_{24} & z_{21} & z_{25} & z_{26} & z_{27}\\
\end{matrix}\right)\]
satisfying $\tilde{z}_{ij}=z_{\rho(i),\chi(j)}$. Hence
$\prec_{v,w,\pi}$ is defined by the ordering of variables
$${\tilde z}_{11}>{\tilde z}_{12} > \cdots > {\tilde z}_{17}> {\tilde z}_{21}> {\tilde z}_{22}>\cdots>{\tilde z}_{27}>{\tilde z}_{31}>\cdots
$$
$$z_{22} > z_{23} > \cdots > z_{27}> z_{12}> z_{13}>\cdots>z_{17}>z_{42}>\cdots \ ,
$$
by reading the rows left to right, and bottom to top.
Restricting to the variables actually used in $Z^{(v)}$ gives the ordering
$\pi$ to be
\[\pi: \ \ z_{22}>z_{23}>z_{24}>z_{21}>z_{26}>z_{12}>z_{13}>z_{14}>z_{11}>z_{42}>z_{43}>z_{44}>z_{52}>z_{53}>z_{62}.\]
Thus, the given generators form a Gr\"{o}bner basis with respect to $\prec_{v,w,\pi}$ for this choice of~$\pi$.\qed
\end{example}

We record the fact below for future reference. The proof is
immediate from the above definitions:

\begin{lemma}\label{lemma:simple} Let $1\le i\le \ell$ and define
$b_i'=b_i-k_i$, where, as above 
\[k_i=\#\{\mbox{$1$'s dominated by $(b_i,\lambda_i-i+b_i)$}\}.\]
Then
\[\{1,2,\dots,b_i\}\setminus\{\alpha^{(i)}_1,\dots,\alpha^{(i)}_{k_i}\}=R_i=\{r_1,\dots,r_{b_i'}\}\]
and
\[\{1,2,\dots,\lambda_i-i+b_i\}\setminus\{\beta^{(i)}_1,\dots,\beta^{(i)}_{k_i}\}=C_i=\{c_1,\dots,c_{\lambda_i-i+b_i'}\}.\]
\end{lemma}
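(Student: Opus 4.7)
The plan is to observe that this lemma is essentially a cardinality count, and that the first displayed equality in each assertion is literally the definition of $R_i$ and $C_i$ given in Section~4.2. Consequently, what actually requires justification is the enumeration on the right: that $R_i$ has exactly $b_i'=b_i-k_i$ elements, and that $C_i$ has exactly $\lambda_i-i+b_i'$ elements.

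First I would handle the row set $R_i$. By construction, the $\alpha^{(i)}_j$ are the row indices of the $k_i$ specialized $1$'s of $Z^{(v)}$ that are dominated by $(b_i,\lambda_i-i+b_i)$. Since the $1$'s in $Z^{(v)}$ occupy positions $(n+1-v(c),c)$ for $c=1,\dots,n$ and $v$ is a permutation, distinct columns force distinct row indices; hence $\alpha^{(i)}_1,\dots,\alpha^{(i)}_{k_i}$ are pairwise distinct. The domination hypothesis forces $\alpha^{(i)}_j\le b_i$, so $\{\alpha^{(i)}_1,\dots,\alpha^{(i)}_{k_i}\}$ is a $k_i$-element subset of $\{1,\dots,b_i\}$. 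Therefore the set difference that defines $R_i$ has exactly $b_i-k_i=b_i'$ elements, which, listed in increasing order, yield the $r_1<\dots<r_{b_i'}$ of the statement.

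The column count is entirely symmetric: the $\beta^{(i)}_j$ are the column coordinates of the same $k_i$ specialized $1$'s, they are pairwise distinct for the same permutation reason, and each satisfies $\beta^{(i)}_j\le \lambda_i-i+b_i$ by domination. So $C_i$ is obtained by removing $k_i$ elements from $\{1,\dots,\lambda_i-i+b_i\}$, leaving $\lambda_i-i+b_i-k_i=\lambda_i-i+b_i'$ entries, which enumerate as $c_1<\dots<c_{\lambda_i-i+b_i'}$.

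No genuine obstacle is anticipated; the statement is best viewed as fixing notation ($b_i'$ and the indexed listings of $R_i$, $C_i$) that will be convenient downstream when verifying Theorem~\ref{thm:standard}. The one point worth double-checking is the relation $\alpha^{(i)}_j=n+1-v(\beta^{(i)}_j)$ noted in Section~4.2, which simply records that each dominated $1$ sits in a well-defined row determined by its column, and which is what guarantees the pairwise distinctness used above.
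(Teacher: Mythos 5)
Your proof is correct and matches the paper's treatment: the paper omits the argument as ``immediate from the above definitions,'' and the counting you supply (the dominated $1$'s of $Z^{(v)}$ lie in pairwise distinct rows and columns because $v$ is a permutation, and domination places their row indices in $\{1,\dots,b_i\}$ and column indices in $\{1,\dots,\lambda_i-i+b_i\}$) is exactly that immediate argument, giving $|R_i|=b_i-k_i=b_i'$ and $|C_i|=\lambda_i-i+b_i'$. One small notational caveat: in the paper $r_1,\dots,r_{b_i'}$ denote the first $b_i'$ letters of the one-line notation of $\rho$ (increasing only within each block $R_{j+1}\setminus R_j$, not globally), but since the filtration $R_1\subseteq R_2\subseteq\cdots$ makes those letters exhaust $R_i$ as a set, and the lemma is a set equality, this does not affect your argument.
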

\section{The prime decomposition theorem}

\subsection{The covexillary permutation $\Theta_{v,w}$}

We now associate to a covexillary $w$ and a permutation
$v\leq w$ a new covexillary permutation $\Theta_{v,w}$.
\begin{deflemma}
\label{deflemma:Theta} Given $v\leq w$ and $w$ covexillary, there is
a unique covexillary permutation $\Theta_{v,w}\in S_n$ such that
$\lambda(w)=\lambda(\Theta_{v,w})$,
and
\[\Ess(\Theta_{v,w})=\{{\mathfrak e}':\mbox{${\mathfrak e}'$ is obtained
by moving an ${\mathfrak e}\in \Ess(w)$ diagonally southwest by
$r_{{\mathfrak e}}^w-r_{{\mathfrak e}}^v$ units\}}\]
where
$r_{{\mathfrak e}'}^{\Theta_{v,w}}=r_{{\mathfrak e}}^w-r_{{\mathfrak e}}^v$,
for each ${\mathfrak e}\in \Ess(w)$.
\end{deflemma}

Although the proof of Definition-Lemma~\ref{deflemma:Theta}
actually describes an iterative algorithm for constructing $\Theta_{v,w}$, we
emphasize that for the main theorems of this section and the next, it is sufficient to know just $\Ess(\Theta_{v,w})$,
which can be handily computed from $v$ and $w$. To be precise, given $D(w)$, one can draw in the $\bullet$'s of (the graph of) $v$. 
Then one moves each box $\mathfrak{e}\in\Ess(w)$ diagonally southwest by the number of $\bullet$'s of
$v$ weakly southwest of it.

The proof is delayed until Section~5.3, where we collect
some related facts.

\begin{example}
Continuing Example~\ref{exa:7531462}, the reader can check that
$\Theta_{5123746,7531462}=4635721$ is the unique permutation satisfying the conditions of
Definition-Lemma~\ref{deflemma:Theta}.
\end{example}

\subsection{From pipe dreams to flagged tableaux}\label{sec:Flagged tableaux and pipe
dreams} Given
\[\lambda=(\lambda_1\geq\lambda_2\geq\ldots\geq\lambda_\ell>0)\]
and a vector of nonnegative integers
\[{\bf
b}=(b_1,\ldots,b_{\ell})\]
define a semistandard Young tableau $T$ of
shape $\lambda$ to be {\bf flagged by ${\bf b}$} if the labels of $T$ in
row $i$ are at most $b_i$.

Associated to each covexillary permutation $w\in S_n$, there is a
flagging ${\bf b}={\bf b}(w)$: As in Section~4.2, consider the smallest {\bf French notation}
Young diagram (i.e., where the $i$-th row from the bottom is of length $\lambda_i$)
$B(w)\subseteq n\times n$ that contains all the boxes of $\Ess(w)$ as
well as the box at $(1,1)$. A {\bf pipe dream}
consists of a placement of $+$'s in a subset of the boxes of
$B(w)$. The {\bf initial pipe dream} for $w$ places $+$'s in each
box of the French Young diagram $\lambda(w)\subseteq B(w)$ with
its southwest corner is at $(1,1)$ (the fact that one has ``$\subseteq$'' is well-known, and follows, e.g., from
the discussion of Section~5.3). Iteratively define all other {\bf pipe
dreams for $w$} by using the following local transformation in any
$2\times 2$ square in $B(w)$:
\[\begin{matrix}
\cdot & \cdot\\
+ & \cdot
\end{matrix}\ \ \mapsto \ \
\begin{matrix}
\cdot & +\\
\cdot & \cdot
\end{matrix}
\]
Each $+$ in the initial pipe dream for $w$ is in obvious one to one correspondence with the box of
$\lambda(w)$ that it sits in. More generally, this extends inductively to every other pipe dream of $w$.
Thus, we can construct a tableau of shape $\lambda(w)$ by recording in each box
the row that its $+$ lies in. Again by induction, using the transformations above, it is easy to verify
that this tableau is semistandard.

\begin{example}
\label{exa:somepipes}
Continuing the previous example, the reader can check that the pipe dreams for $\Theta_{v,w}=4635721$ are given
in Figure~\ref{fig:somepipes} below, where the left pipe dream is the initial pipe dream for
$\Theta_{v,w}$. We have also drawn in $B(\Theta_{v,w})=(4,3,3)$. (Alternatively, starting directly from
$v$ and $w$ one can quickly determine $\Ess(\Theta_{v,w})$ and thus $B(\Theta_{v,w})$, without knowing
$\Theta_{v,w}$ itself, and then write down the pipe dreams.)\qed

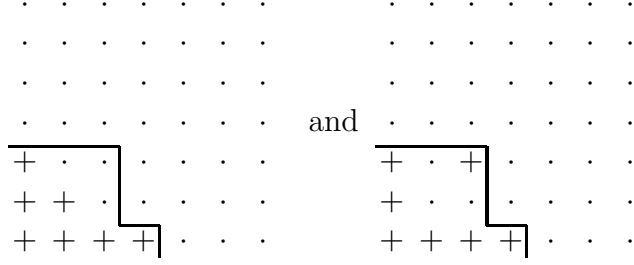
\begin{figure}[h]\setlength{\unitlength}{1.5pt}
\begin{picture}(90,60)
\put(-2,0){$+$}\put(8,0){$+$}\put(18,0){$+$}\put(28,0){$+$}\put(-2,10){$+$}\put(8,10){$+$}\put(-2,20){$+$}
\put(40,0){$\cdot$}\put(50,0){$\cdot$}\put(60,0){$\cdot$}
\put(20,10){$\cdot$} \put(30,10){$\cdot$} \put(40,10){$\cdot$}
\put(50,10){$\cdot$} \put(60,10){$\cdot$} \put(10,20){$\cdot$}
\put(20,20){$\cdot$}\put(30,20){$\cdot$}\put(40,20){$\cdot$}
\put(50,20){$\cdot$}\put(60,20){$\cdot$}
\put(0,30){$\cdot$}\put(10,30){$\cdot$}
\put(20,30){$\cdot$}\put(30,30){$\cdot$}\put(40,30){$\cdot$}
\put(50,30){$\cdot$}\put(60,30){$\cdot$}
\put(0,40){$\cdot$}\put(10,40){$\cdot$}
\put(20,40){$\cdot$}\put(30,40){$\cdot$}\put(40,40){$\cdot$}
\put(50,40){$\cdot$}\put(60,40){$\cdot$}
\put(0,50){$\cdot$}\put(10,50){$\cdot$}
\put(20,50){$\cdot$}\put(30,50){$\cdot$}\put(40,50){$\cdot$}
\put(50,50){$\cdot$}\put(60,50){$\cdot$}
\put(0,60){$\cdot$}\put(10,60){$\cdot$}
\put(20,60){$\cdot$}\put(30,60){$\cdot$}\put(40,60){$\cdot$}
\put(50,60){$\cdot$}\put(60,60){$\cdot$}

\thicklines
\put(-3,26){\line(1,0){28}} \put(25,26){\line(0,-1){20}}
\put(25,6){\line(1,0){10}} \put(35,6){\line(0,-1){8}}
\thinlines
\end{picture}
\begin{picture}(70,65)
\put(-20,30){and}
\put(-2,0){$+$}\put(8,0){$+$}\put(18,0){$+$}\put(28,0){$+$}\put(-2,10){$+$}\put(18,20){$+$}\put(-2,20){$+$}
\put(40,0){$\cdot$}\put(50,0){$\cdot$}\put(60,0){$\cdot$}
\put(20,10){$\cdot$} \put(30,10){$\cdot$} \put(40,10){$\cdot$}
\put(50,10){$\cdot$} \put(60,10){$\cdot$} \put(10,20){$\cdot$}
\put(10,10){$\cdot$}\put(30,20){$\cdot$}\put(40,20){$\cdot$}
\put(50,20){$\cdot$}\put(60,20){$\cdot$}
\put(0,30){$\cdot$}\put(10,30){$\cdot$}
\put(20,30){$\cdot$}\put(30,30){$\cdot$}\put(40,30){$\cdot$}
\put(50,30){$\cdot$}\put(60,30){$\cdot$}
\put(0,40){$\cdot$}\put(10,40){$\cdot$}
\put(20,40){$\cdot$}\put(30,40){$\cdot$}\put(40,40){$\cdot$}
\put(50,40){$\cdot$}\put(60,40){$\cdot$}
\put(0,50){$\cdot$}\put(10,50){$\cdot$}
\put(20,50){$\cdot$}\put(30,50){$\cdot$}\put(40,50){$\cdot$}
\put(50,50){$\cdot$}\put(60,50){$\cdot$}
\put(0,60){$\cdot$}\put(10,60){$\cdot$}
\put(20,60){$\cdot$}\put(30,60){$\cdot$}\put(40,60){$\cdot$}
\put(50,60){$\cdot$}\put(60,60){$\cdot$}

\thicklines
\put(-3,26){\line(1,0){28}}
\put(25,26){\line(0,-1){20}} \put(25,6){\line(1,0){10}}
\put(35,6){\line(0,-1){8}}
\thinlines
\end{picture}
\caption{\label{fig:somepipes} Pipe dreams for $\Theta_{v,w}=4635721$, and $B(\Theta_{v,w})$}
\end{figure}

\end{example}

Not every semistandard tableau of shape $\lambda(w)$ can be obtained
this way. The maximum entry of row $i$ of such a tableau $T$ is
bounded from above by how far north the rightmost $+$ in the $i$-th
row of the starting pipe dream can travel diagonally (not taking into account any other $+$'s)
and remain inside $B(w)$. Let $b_i$ denote this row number.
Actually, this gives the same $b_i$ as defined in Section~4.2, which we recall:
\[b_i=\max_{m}\{B(w)_{m}\geq
\lambda(w)_{i}+m-i\}.\]

\begin{example}
\label{exa:correspondtab}
The corresponding tableaux to the above pipe dreams are:
\[\tableau{
{1 }&{ 1 }&{ 1 }&{ 1 }\\
{2 }&{ 2 }\\
{3 }\\
}\quad \textrm{ and }\quad
\tableau{
{1 }&{ 1 }&{ 1 }&{ 1 }\\
{2 }&{ 3 }\\
{3}\\
},\] and here ${\bf b}(\Theta_{v,w})=(1,3,3)$. \qed

\end{example}

\begin{theorem}
\label{thm:prime}
We have
\[{\rm init}_{\prec_{v,w,\pi}} \ I_{v,w}=\bigcap_{\mathcal P} \langle {\widetilde z}_{ij} \ : (i,j)\in {\mathcal P}\rangle\]
where $\tilde{z}_{ij}=z_{\rho(i),\chi(j)}$ (cf. Section~4.2 and Example~\ref{exa:7531462}). Here
the intersection is over all pipe dreams for $\Theta_{v,w}$.

The associated Stanley
Reisner complex $\Delta_{v,w,\pi}$
is homeomorphic to a vertex decomposable ball or sphere. In particular, the
limit defines an equidimensional scheme.

The irreducible components, or equivalently, the facets of $\Delta_{v,w,\pi}$ are in bijection with
semistandard Young tableaux of shape $\lambda(w)$ and
flagged by ${\bf b}(\Theta_{v,w})$.
\end{theorem}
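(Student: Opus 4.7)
The plan is to reduce the theorem to the analogous statement for covexillary matrix Schubert varieties proved in \cite{KMY}. By Theorem~\ref{thm:standard} the essential minors of $I_{v,w}$ form a Gr\"{o}bner basis for $\prec_{v,w,\pi}$, so ${\rm init}_{\prec_{v,w,\pi}} I_{v,w}$ is generated by the leading terms of those minors. The term order was engineered (via the nonstandard degree together with the shuffling $\pi$ built from $\rho$ and $\chi$ in Section~4.2) so that these leading terms are precisely the antidiagonal monomials of the corresponding submatrices of the shuffled matrix $\widetilde{Z}$. Thus the first step is to rewrite ${\rm init}_{\prec_{v,w,\pi}} I_{v,w}$ in the shuffled variables $\tilde{z}_{ij}=z_{\rho(i),\chi(j)}$ as a pure antidiagonal initial ideal.

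Next, I would verify that, up to a factor of affine space coming from the ambient variables of $Z$ that were specialized to $0$ or $1$ to form $Z^{(v)}$, this antidiagonal initial ideal agrees with the antidiagonal initial ideal of the Schubert determinantal ideal $I_{\Theta_{v,w}}$. This is the crucial combinatorial bookkeeping step: using Lemma~\ref{lemma:simple} together with Definition-Lemma~\ref{deflemma:Theta}, the subsets $R_i$ and $C_i$ exactly describe, inside $\widetilde{Z}$, the submatrix on which the rank bound at each essential box of $\Theta_{v,w}$ is imposed, and the diagonal southwest shift of $\Ess(w)$ to $\Ess(\Theta_{v,w})$ accounts precisely for the $1$'s that are dominated in $Z^{(v)}$. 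Once this identification is in hand, the main result of \cite{KMY} immediately supplies the stated prime decomposition $\bigcap_{\mathcal P}\langle\tilde{z}_{ij}:(i,j)\in{\mathcal P}\rangle$ indexed by pipe dreams $\mathcal P$ for $\Theta_{v,w}$; the Knutson--Miller subword complex structure from \cite{Knutson.Miller:subword} transfers under the shuffling and gives the vertex decomposable ball-or-sphere assertion, with equidimensionality as a formal consequence.

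For the final bijective claim, I would use the recording map described in Section~5.2, as illustrated by Examples~\ref{exa:somepipes} and~\ref{exa:correspondtab}: for each cell of $\lambda(w)$ occupied by a $+$ in the initial pipe dream of $\Theta_{v,w}$, write in that cell the row index of the $+$ that occupies the corresponding position in the given pipe dream. An induction on the local moves shows that the resulting tableau is semistandard, while the definition of $b_i$ as $\max_m\{B(w)_m\geq\lambda_i+m-i\}$ is exactly the largest row index a $+$ starting in $(i,\lambda_i)$ can reach while staying inside $B(\Theta_{v,w})$, so the flagging by ${\bf b}(\Theta_{v,w})$ matches. Reversing the moves recovers a pipe dream from any flagged SSYT, giving the bijection.

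The main obstacle is the identification of initial ideals in step one: verifying that the intricate combinatorial data of $\rho$, $\chi$, and the nonstandard grading genuinely realize the alignment with the covexillary matrix Schubert setting of \cite{KMY}. This amounts to matching essential sets, rank bounds, and dominated $1$'s simultaneously, but once carried out the prime decomposition, topology of $\Delta_{v,w,\pi}$, and flagged tableaux enumeration all follow by direct import from the matrix Schubert case.
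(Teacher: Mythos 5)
Your overall strategy---identify the degeneration, up to an affine factor and a permutation of coordinates, with the antidiagonal degeneration of the matrix Schubert variety for $\Theta_{v,w}$ and then import the prime decomposition, the subword-complex topology, and the flagged-tableau bijection from \cite{KMY}---is exactly the paper's strategy, and your last two steps (topology transferring under the shuffle, and the pipe dream/flagged SSYT bijection of Section~5.2) are fine. The gap is in the step you call ``combinatorial bookkeeping.'' Matching essential boxes, rank bounds, and dominated $1$'s via Definition--Lemma~\ref{deflemma:Theta} and Lemma~\ref{lemma:simple} only shows that each antidiagonal generator of ${\rm init}_{\prec_{\widetilde{\rm antidiag}}}{\widetilde I}_{\Theta_{v,w}}$ arises as the lead term of an essential minor of $I_{v,w}$ that uses \emph{all} rows and columns through the $1$'s of $Z^{(v)}_{\mathfrak e}$; this is the paper's Lemma~\ref{lemma:thesame}, and it gives only the containment $H_{v,w}\subseteq {\rm init}_{\prec_{v,w,\pi}}I_{v,w}$. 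The essential minors that omit some $1$-row or $1$-column contribute further lead-term generators (of larger degree), and nothing in your proposal shows these are redundant, i.e.\ already in $H_{v,w}$. Citing Theorem~\ref{thm:standard} does not close this: it identifies the initial ideal with the ideal generated by \emph{all} these lead terms, not with the sub-ideal $H_{v,w}$ coming from $\Theta_{v,w}$ (and in the paper Theorems~\ref{thm:standard} and~\ref{thm:prime} are in fact proved by one and the same argument, so the identification you need is precisely what remains to be proved).

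The paper closes this gap not combinatorially but by a multidegree argument: $H_{v,w}$ is radical and equidimensional by Lemma~\ref{lemma:equidandreduced} (via Theorem~\ref{thm:KMY}); the multidegree of ${\mathcal N}_{v,w}$ equals that of the antidiagonal degeneration of ${\overline X}_{\Theta_{v,w}}$ by Proposition~\ref{proposition:multidegreesagree}, whose proof is the technical heart of Section~7 (the identity $s_{\lambda,{\bf b}}(Y-X)=s_{\lambda,{\bf b}''}(Y'-X')$ between flagged double Schur polynomials, proved through the auxiliary flagging ${\bf b}'$ using Lemmas~\ref{lemma:bfact1} and~\ref{lemma:bfact2}); Hilbert series and hence multidegrees are preserved under the degeneration by Theorem~\ref{thm:basic}(IV); and then Knutson--Miller's Lemma~\ref{lemma:KM} forces $H_{v,w}={\rm init}_{\prec_{v,w,\pi}}I_{v,w}$. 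Without this (or some genuinely new combinatorial argument showing the extra lead terms lie in $H_{v,w}$), your reduction to \cite{KMY} is not justified, so as written the proposal is incomplete at its central step.
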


\begin{example}
\label{exa:bizarre}
We have the following prime decomposition
$$\aligned{\rm init}_{\prec_{v,w,\pi}} I_{5123746, 7531462}&=\langle
 z_{12}, z_{21},z_{22}, z_{23}, z_{24}, z_{42}, z_{13}z_{44}  \rangle\\
 &=\langle
z_{12},z_{13},z_{21},z_{22},z_{23},z_{24},z_{42}\rangle\cap \langle
z_{12},z_{21},z_{22},z_{23},z_{24},z_{42},z_{44}\rangle.\endaligned$$

We can associate a $\pi$-shuffled tableaux to each component by placing a $+$ in the position
 of $z_{ab}$ in the shuffled generic matrix ${\widetilde Z}$ whenever $z_{ab}$ appears as a generator of the prime
 ideal for that component (and $\cdot$'s everywhere else).
The result are precisely the pipe dreams given in
Example~\ref{exa:somepipes}, which are themselves in an easy bijection with the semistandard Young tableaux
of Example~\ref{exa:correspondtab}. This accounts for the use of ${\widetilde z}_{ij}$ in
Theorem~\ref{thm:prime}, and provides some rationale for our introduction of $\pi$-shuffled tableaux in general in Section~3.
\qed
\end{example}

\subsection{Proof of Definition-Lemma~\ref{deflemma:Theta} and some properties of ${\bf b}$ and $B(w)$}
Suppose $w$ is covexillary and
\[{\mathfrak e}=(i_0,j_0)\in \Ess(w) \mbox{\ where $r_{i_0,j_0}^w>0$.}\]
Define the {\bf transitioned permutation}
$w'$ as follows. Let $(i_1,j_1)$ be the northeast most dot in $D(w)$ that is dominated by $(i_0,j_0)$.
Such a dot exists because of the assumption $r_{i_0,j_0}^w>0$. By the condition that $w$ is covexillary,
there is at one such choice. Let $(i_2,j_2)$ be the dot that is in the same column as $(i_0,j_0)$, and $(i_3,j_3)$ be the dot that is in
the same row as $(i_0,j_0)$. Hence
\[i_2=i_0+1, j_2=j_0 \mbox{ \ and \ } i_3=i_0, j_3=j_0+1.\]
Then define $w'$ by letting
\begin{equation}
\label{eqn:transitioned}
\aligned
& w'(j_1)=n+1-i_3=n+1-i_0,\\
&w'(j_2)=n+1-i_1=w(j_1),\\
&w'(j_3)=n+1-i_2=w(j_0),\\
& \mbox{and $w'(j)=w(j)$ for $j\neq j_1,j_2,j_3$}.\endaligned
\end{equation}
The figure below illustrates this description of $w'$:

\begin{figure}[h]\label{figure1}
\begin{picture}(350,115)
\put(-40,100){$D(w)$}
\put(0,0){\makebox[0pt][l]{\framebox(140,120)}}
\put(20,20){\line(1,0){60}}
\put(20,40){\line(1,0){60}}
\put(20,60){\line(1,0){40}}
\put(20,20){\line(0,1){40}}
\put(40,20){\line(0,1){40}}
\put(60,20){\line(0,1){40}}
\put(80,20){\line(0,1){20}}
\put(10,70){$(i_0,j_0)$}
\put(35,65){\vector(1,-1){10}}
\put(45,45){${\mathfrak e}$}
\put(-38,20){$(i_1,j_1)$}
\put(-3,20){\vector(1,-1){10}}
\thicklines
\put(10,10){\circle*{4}}
\put(10,10){\line(1,0){130}}
\put(10,10){\line(0,1){110}}
\put(10,95){$(i_2,j_2)$}
\thinlines
\put(35,90){\vector(1,-1){10}}
\thicklines
\put(50,75){\circle*{4}}
\put(50,75){\line(1,0){90}}
\put(50,75){\line(0,1){45}}
\put(72,55){$(i_3,j_3)$}
\put(70,50){\circle*{4}}
\put(70,50){\line(1,0){70}}
\put(70,50){\line(0,1){70}}
\end{picture}
\begin{picture}(600,0)(-300,-15)
\put(-40,100){$D(w')$}
\put(0,0){\makebox[0pt][l]{\framebox(140,120)}}
\put(0,20){\line(1,0){40}}
\put(0,40){\line(1,0){40}}
\put(20,0){\line(0,1){40}}
\put(40,0){\line(0,1){40}}
\put(60,40){\line(1,0){20}}
\put(60,20){\line(1,0){20}}
\put(60,20){\line(0,1){20}}
\put(80,20){\line(0,1){20}}
\put(-80,25){$(i_0-1,j_0-1)$}
\put(-5,30){\vector(1,0){30}}
\put(25,25){${\mathfrak e}'$}
\put(-50,50){$(i_3,j_1)$}
\put(-15,50){\vector(1,0){20}}
\thicklines
\put(50,10){\circle*{4}}
\put(50,10){\line(1,0){90}}
\put(50,10){\line(0,1){110}}
\put(68,-12){$(i_1,j_2)$}
\thinlines
\put(65,-10){\vector(-1,1){15}}
\thicklines
\put(70,75){\circle*{4}}
\put(70,75){\line(1,0){70}}
\put(70,75){\line(0,1){45}}
\put(70,80){$(i_2,j_3)$}
\put(10,50){\circle*{4}}
\put(10,50){\line(1,0){130}}
\put(10,50){\line(0,1){70}}
\end{picture}
\caption{Going from $D(w)$ to $D(w')$ in Definition-Lemma~\ref{deflemma:Theta}}
\end{figure}
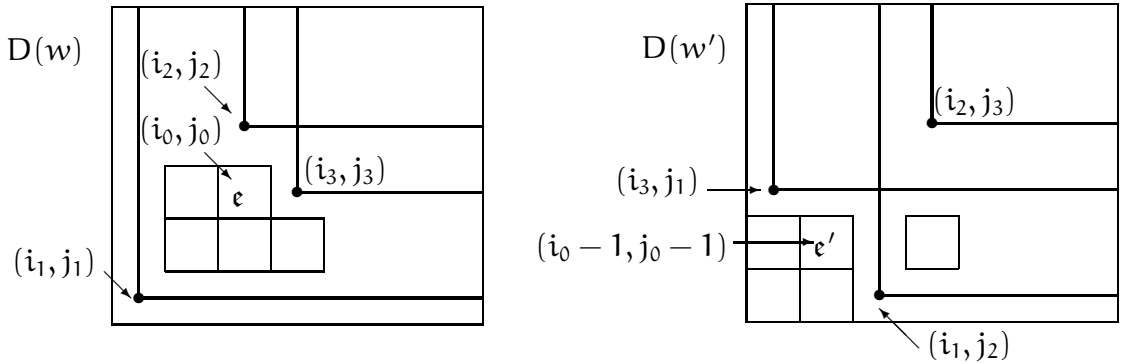
\smallskip

The proof of Definition-Lemma~\ref{deflemma:Theta} is based on the following fact, whose proof is
straightforward and omitted (cf. \cite[Lemma~3.5]{KMY}).

\begin{lemma}
\label{lemma:transitioned}
Let $w$ be covexillary, $(i_0,j_0)\in \Ess(w)$ with $r_{i_0,j_0}^w>0$. Then the transitioned
permutation $w'$ defined by (\ref{eqn:transitioned}) has the following properties:
\begin{itemize}
\item[(i)] $w'$ is covexillary;
\item[(ii)] $\lambda(w)=\lambda(w')$;
\item[(iii)] $\Ess(w')=\left(\Ess(w)\setminus \{(i_0,j_0)\}\right)\cup \{(i_0 -1,j_0-1)\}$; in particular $w$ and $w'$ have the same number of essential set boxes; and
\item[(iv)] $r_{i_0 -1,j_0-1}^{w'}=r_{i_0,j_0}^w -1$ and $r_{{\mathfrak e}}^{w'}=r_{{\mathfrak e}}^w$ for the remaining
(common) essential set boxes~${\mathfrak e}$.
\end{itemize}
\end{lemma}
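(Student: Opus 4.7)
The plan is to verify (i)--(iv) by tracking the effect of the dot permutation prescribed by (\ref{eqn:transitioned}), exploiting the constraints on nearby dots imposed by $(i_0, j_0) \in \Ess(w)$. First I would observe that $w'$ is a well-defined permutation: the values at $j_1, j_2, j_3$ are a cyclic shift of $\{w(j_1), w(j_2), w(j_3)\} = \{n+1-i_1, n+1-i_2, n+1-i_3\}$, while all other values agree with $w$. Next, since $(i_0, j_0) \in D(w)$, the diagram condition (\ref{eqn:diagramdef}) forces no dot to lie in row $i_0$ at columns $\le j_0$ nor in column $j_0$ at rows $\le i_0$; in particular $i_1 < i_0$ and $j_1 < j_0$ strictly. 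The assumption $r_{i_0,j_0}^w > 0$ guarantees the existence of $(i_1,j_1)$, and the covexillarity of $w$ forces its uniqueness as the northeast-most dominated dot (any second such dot would participate in a $3412$-pattern together with $(i_2,j_2)$ and $(i_3,j_3)$).

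For (iii), I would compare $D(w')$ with $D(w)$ using the criterion that $(i, j) \in D(\sigma)$ iff the dot in column $j$ lies strictly above row $i$ and the dot in row $i$ lies strictly to the right of column $j$. Only boxes in rows $i_1, i_2, i_3$ or columns $j_1, j_2, j_3$ can change status. A direct case analysis, using covexillarity to rule out interfering dots in the rectangle $[i_1, i_2] \times [j_1, j_3]$, shows that the net effect is to remove $(i_0, j_0)$ from the diagram and to insert $(i_0-1, j_0-1)$, together with nonessential shifts that produce no new essential boxes. For (i), I would then invoke characterization (iv) of Definition--Theorem~\ref{defthm:covex}: $D(w)$ becomes a Young diagram after row and column permutations, and the above local modification preserves this property, so $w'$ is covexillary. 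Simultaneously, the partition shape is unchanged, giving (ii).

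For (iv), I would compute $r_{i_0-1, j_0-1}^{w'}$ directly. Since $(i_0, j_0) \in D(w)$, the rectangle $\{(a, b) : a \le i_0,\ b \le j_0\}$ and its subrectangle $\{(a, b) : a \le i_0-1,\ b \le j_0-1\}$ contain the same dots of $w$, so $r_{i_0, j_0}^w$ equals the number of dots of $w$ in the smaller rectangle. Under the transition, the dot $(i_1, j_1)$ is moved out of the smaller rectangle (to $(i_0, j_1)$), while the two other moved dots $(i_0+1, j_0)$ and $(i_0, j_0+1)$ lie outside both rectangles before and after. Hence $r_{i_0-1, j_0-1}^{w'} = r_{i_0, j_0}^w - 1$. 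For any other essential box $\mathfrak{e}$ of $w$, which by (iii) remains essential in $w'$, the fact that $\mathfrak{e}$ lies outside the region affected by the $3$-cycle implies $r_{\mathfrak{e}}^{w'} = r_{\mathfrak{e}}^{w}$.

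The main obstacle is the local case analysis underlying (iii) and (i): one must verify carefully, using the $3412$-avoiding property together with the choice of $(i_1, j_1)$ as the northeast-most dominated dot, that no unexpected dot sits inside $[i_1, i_2] \times [j_1, j_3]$ to spoil the expected diagram comparison. Once this bookkeeping is pinned down, (i)--(iv) follow without further difficulty, and the result is in complete parallel with \cite[Lemma~3.5]{KMY}.
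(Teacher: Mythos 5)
The paper itself offers no argument here (the proof is declared ``straightforward and omitted (cf.\ \cite[Lemma~3.5]{KMY})''), so your sketch has to stand on its own, and its load-bearing step is false: covexillarity does \emph{not} rule out dots of $w$ inside the rectangle $[i_1,i_2]\times[j_1,j_3]$ other than the three moved ones. Take $w=631452\in S_6$, which is covexillary, with $D(w)=\{(2,2),(2,3),(2,4),(3,2),(3,3),(5,3)\}$ and $\Ess(w)=\{(2,4),(3,3),(5,3)\}$, and take $(i_0,j_0)=(3,3)$, which satisfies the stated hypotheses since $r^{w}_{3,3}=1>0$. Then $(i_1,j_1)=(1,1)$, $(i_2,j_2)=(6,3)$, $(i_3,j_3)=(3,4)$, and the dot $(4,2)$ of $w$ lies strictly inside $[1,6]\times[1,4]$. (Note also that $i_2\neq i_0+1$ here, and in the paper's own running example the first transition has $j_3=j_0+3$, so the parenthetical ``hence $i_2=i_0+1$, $j_3=j_0+1$'' following (\ref{eqn:transitioned}) is not automatic either; your computation for (iv) quietly places the other two moved dots at $(i_0+1,j_0)$ and $(i_0,j_0+1)$, though that particular step survives because it only needs them outside the two rectangles.)

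Worse, the ``direct case analysis'' you defer to cannot be completed for an arbitrary essential box with positive rank, because in this example the conclusions themselves fail: formula (\ref{eqn:transitioned}) gives $w'=436152$, which contains the pattern $3412$ at positions $1,3,4,6$ (values $4,6,1,2$), so (i) fails; moreover $D(w')=\{(1,1),(1,2),(2,1),(2,2),(2,4),(5,4)\}$, hence $\Ess(w')=\{(2,2),(2,4),(5,4)\}$, so the box $(5,3)\in\Ess(w)$ does not survive and a new essential box $(5,4)$ appears, contradicting (iii); and $r^{w'}_{5,3}=3\neq 2=r^{w}_{5,3}$, so the claim that the other essential boxes ``lie outside the region affected by the $3$-cycle'' is also unfounded. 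The mechanism is exactly an interfering dot: $(4,2)$ makes $(3,3)$ essential even though $D(w)$ still contains $(5,3)$ strictly north of it in column $j_0$; after the $3$-cycle the dot of column $j_0$ drops to row $i_1$ and the dot of column $j_3$ rises to row $i_2$, changing the status of faraway boxes in those columns. A correct proof therefore must isolate and use the positional property of the box to which the transition is actually applied in the proof of Definition--Lemma~\ref{deflemma:Theta} (the northmost-then-eastmost essential box still to be moved; in that configuration no box of $D(w)$ lies strictly north of $(i_0,j_0)$ in its column nor strictly east of it in its row, which is what the paper's figure tacitly depicts), and carry out the diagram comparison under that hypothesis. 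The parts you do verify --- well-definedness of $w'$, the strict inequalities $i_1<i_0$, $j_1<j_0$, uniqueness of $(i_1,j_1)$ via a $3412$ pattern, and the count $r^{w'}_{i_0-1,j_0-1}=r^{w}_{i_0,j_0}-1$ --- are correct, but they are the easy parts; the heart of the lemma is precisely what the false rectangle claim assumes away.
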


\noindent
\emph{Proof of Definition-Lemma~\ref{deflemma:Theta}:}
We algorithmically construct the covexillary
permutation $\Theta_{v,w}$ with the
stated essential set and rank conditions.
(Once achieved,
Proposition~\ref{prop:ess_unique} implies that the permutation is unique, and hence
$\Theta_{v,w}$ is well defined.)

Attach to each essential box $\mathfrak{e}$ the
nonnegative integer
\[f(\mathfrak{e})=r_{{\mathfrak e}}^w-r_{{\mathfrak e}}^v,\]
thought of as indicating how many steps the box $\mathfrak{e}$ should be moved in the southwest direction.
Note that $f({\mathfrak e})\geq 0$ follows from Proposition~\ref{prop:bruhat}. Let
\[k=\sum_{{\mathfrak e}\in\Ess(w)} f({\mathfrak e}).\]
We repeat the following process,
which decreases exactly one of the $f({\mathfrak e})$ by $1$, giving another (intermediate) covexillary permutation.
We terminate when all the $f({\mathfrak e})$'s become $0$ at which point we output $\Theta_{v,w}$.

Define
$$\aligned &i_0=\max\{i\;|\; (i,j) \textrm{ is an essential box and } f((i,j))>0 \},\\
&j_0=\max\{j\;|\; (i_0,j) \textrm{ is an essential box and } f((i_0,j))>0\}.\endaligned$$
Hence $(i_0,j_0)$ gives the coordinates of the northmost then eastmost essential set box that still needs to be
moved (in particular, $r_{i_0,j_0}^{w}>0$). Then set $w'$ to be the transitioned permutation for $w$.

By Lemma~\ref{lemma:transitioned}, $w'$ is covexillary, and $\Ess(w')$ and $\Ess(w)$ are the same except
that $(i_0,j_0)\in \Ess(w)$ has now moved to $(i_0-1,j_0-1)\in \Ess(w')$. Attach to
$(i_0-1,j_0-1)$ the integer $f(i_0,j_0)-1$ and keep the attached integers unchanged for other
essential boxes.

Repeat the algorithm for $w'$. In view of Lemma~\ref{lemma:transitioned}(iv) it follows that we can
do this process $k$ steps. We obtain a permutation and
name it $\Theta_{v,w}$. That $\Theta_{v,w}$ has the desired properties follows from the construction and
inductively applying parts (i), (ii) and (iii) of Lemma~\ref{lemma:transitioned}.\qed

\begin{example}
We now illustrate the algorithm described
in the proof of Definition-Lemma~\ref{deflemma:Theta} by computing
$\Theta_{5123746,7531462}=4635721$ in steps:

\begin{figure}[h]\setlength{\unitlength}{.45pt}
\begin{picture}(220,140)
\put(-60,60){$w=$} \put(0,0){\makebox[0pt][l]{\framebox(140,140)}}
\put(20,20){\line(1,0){80}} \put(20,40){\line(1,0){80}}
\put(40,60){\line(1,0){40}} \put(40,80){\line(1,0){40}}
\put(60,100){\line(1,0){20}} \put(60,120){\line(1,0){20}}
\put(20,20){\line(0,1){20}} \put(40,20){\line(0,1){20}}
\put(60,20){\line(0,1){20}} \put(80,20){\line(0,1){20}}
\put(100,20){\line(0,1){20}} \put(40,60){\line(0,1){20}}
\put(60,60){\line(0,1){20}} \put(80,60){\line(0,1){20}}
\put(60,100){\line(0,1){20}} \put(80,100){\line(0,1){20}}
\put(85,25){$e$} \put(65,65){$e$} \put(65,105){$e$}
\thicklines
\put(10,10){\circle*{4}}
\put(10,10){\line(1,0){130}}
\put(10,10){\line(0,1){130}}
\put(30,50){\circle*{4}}
\put(30,50){\line(1,0){110}}
\put(30,50){\line(0,1){90}}
\put(50,90){\circle*{4}}
\put(50,90){\line(1,0){90}}
\put(50,90){\line(0,1){50}}
\put(70,130){\circle*{4}}
\put(70,130){\line(1,0){70}}
\put(70,130){\line(0,1){10}}
\put(90,70){\circle*{4}}
\put(90,70){\line(1,0){50}}
\put(90,70){\line(0,1){70}}
\put(110,30){\circle*{4}}
\put(110,30){\line(1,0){30}}
\put(110,30){\line(0,1){110}}
\put(130,110){\circle*{4}}
\put(130,110){\line(1,0){10}}
\put(130,110){\line(0,1){30}}
\end{picture}
\begin{picture}(220,140)
\put(-60,60){$\mapsto$}
\put(0,0){\makebox[0pt][l]{\framebox(140,140)}}
\put(20,20){\line(1,0){80}} \put(20,40){\line(1,0){80}}
\put(40,60){\line(1,0){40}} \put(40,80){\line(1,0){40}}
\put(40,80){\line(1,0){20}} \put(40,100){\line(1,0){20}}
\put(20,20){\line(0,1){20}} \put(40,20){\line(0,1){20}}
\put(60,20){\line(0,1){20}} \put(80,20){\line(0,1){20}}
\put(100,20){\line(0,1){20}} \put(40,60){\line(0,1){20}}
\put(60,60){\line(0,1){20}} \put(80,60){\line(0,1){20}}
\put(40,80){\line(0,1){20}} \put(60,80){\line(0,1){20}}
\put(85,25){$e$} \put(65,65){$e$} \put(45,85){$e$}
\thicklines
\put(10,10){\circle*{4}}
\put(10,10){\line(1,0){130}}
\put(10,10){\line(0,1){130}}
\put(30,50){\circle*{4}}
\put(30,50){\line(1,0){110}}
\put(30,50){\line(0,1){90}}
\put(70,90){\circle*{4}}
\put(70,90){\line(1,0){70}}
\put(70,90){\line(0,1){50}}
\put(130,130){\circle*{4}}
\put(130,130){\line(1,0){10}}
\put(130,130){\line(0,1){10}}
\put(90,70){\circle*{4}}
\put(90,70){\line(1,0){50}}
\put(90,70){\line(0,1){70}}
\put(110,30){\circle*{4}}
\put(110,30){\line(1,0){30}}
\put(110,30){\line(0,1){110}}
\put(50,110){\circle*{4}}
\put(50,110){\line(1,0){90}}
\put(50,110){\line(0,1){30}}
\end{picture}
\begin{picture}(220,140)
\put(-60,60){$\mapsto$}
\put(0,0){\makebox[0pt][l]{\framebox(140,140)}}
\put(20,20){\line(1,0){80}} \put(20,40){\line(1,0){80}}
\put(20,60){\line(1,0){20}} \put(60,60){\line(1,0){20}}
\put(20,80){\line(1,0){20}} \put(60,80){\line(1,0){20}}
\put(20,20){\line(0,1){20}} \put(40,20){\line(0,1){20}}
\put(60,20){\line(0,1){20}} \put(80,20){\line(0,1){20}}
\put(100,20){\line(0,1){20}} \put(20,60){\line(0,1){20}}
\put(40,60){\line(0,1){20}} \put(60,60){\line(0,1){20}}
\put(80,60){\line(0,1){20}} \put(20,40){\line(0,1){20}}
\put(40,40){\line(0,1){20}} \put(85,25){$e$} \put(65,65){$e$}
\put(25,65){$e$}
\thicklines \put(10,10){\circle*{4}} \put(10,10){\line(1,0){130}}
\put(10,10){\line(0,1){130}} \put(50,50){\circle*{4}}
\put(50,50){\line(1,0){90}} \put(50,50){\line(0,1){90}}
\put(30,90){\circle*{4}} \put(30,90){\line(1,0){110}}
\put(30,90){\line(0,1){50}} \put(130,130){\circle*{4}}
\put(130,130){\line(1,0){10}} \put(130,130){\line(0,1){10}}
\put(90,70){\circle*{4}} \put(90,70){\line(1,0){50}}
\put(90,70){\line(0,1){70}} \put(110,30){\circle*{4}}
\put(110,30){\line(1,0){30}} \put(110,30){\line(0,1){110}}
\put(70,110){\circle*{4}} \put(70,110){\line(1,0){70}}
\put(70,110){\line(0,1){30}} \put(160,60){$\mapsto$}
\end{picture}
\end{figure}

\begin{figure}[h]\setlength{\unitlength}{.45pt}
\begin{picture}(220,140)
\put(-60,60){$\mapsto$}
\put(0,0){\makebox[0pt][l]{\framebox(140,140)}}
\put(20,20){\line(1,0){80}} \put(20,40){\line(1,0){80}}
\put(20,60){\line(1,0){20}} \put(20,80){\line(1,0){20}}
\put(40,60){\line(1,0){20}} \put(20,20){\line(0,1){20}}
\put(40,20){\line(0,1){20}} \put(60,20){\line(0,1){20}}
\put(80,20){\line(0,1){20}} \put(100,20){\line(0,1){20}}
\put(20,60){\line(0,1){20}} \put(40,60){\line(0,1){20}}
\put(20,40){\line(0,1){20}} \put(40,40){\line(0,1){20}}
\put(60,40){\line(0,1){20}} \put(85,25){$e$} \put(45,45){$e$}
\put(25,65){$e$}
\thicklines
\put(10,10){\circle*{4}}
\put(10,10){\line(1,0){130}}
\put(10,10){\line(0,1){130}}
\put(70,50){\circle*{4}}
\put(70,50){\line(1,0){70}}
\put(70,50){\line(0,1){90}}
\put(30,90){\circle*{4}}
\put(30,90){\line(1,0){110}}
\put(30,90){\line(0,1){50}}
\put(130,130){\circle*{4}}
\put(130,130){\line(1,0){10}}
\put(130,130){\line(0,1){10}}
\put(50,70){\circle*{4}}
\put(50,70){\line(1,0){90}}
\put(50,70){\line(0,1){70}}
\put(110,30){\circle*{4}}
\put(110,30){\line(1,0){30}}
\put(110,30){\line(0,1){110}}
\put(90,110){\circle*{4}}
\put(90,110){\line(1,0){50}}
\put(90,110){\line(0,1){30}}
\end{picture}
\begin{picture}(220,140)
\put(-60,60){$\mapsto$}
\put(0,0){\makebox[0pt][l]{\framebox(140,140)}}
\put(40,20){\line(1,0){60}} \put(40,40){\line(1,0){60}}
\put(40,60){\line(1,0){20}} \put(0,20){\line(1,0){20}}
\put(0,40){\line(1,0){20}} \put(0,60){\line(1,0){20}}
\put(20,0){\line(0,1){60}} \put(40,20){\line(0,1){20}}
\put(60,20){\line(0,1){20}} \put(80,20){\line(0,1){20}}
\put(100,20){\line(0,1){20}} \put(40,40){\line(0,1){20}}
\put(60,40){\line(0,1){20}} \put(85,25){$e$} \put(45,45){$e$}
\put(05,45){$e$}
\thicklines
\put(30,10){\circle*{4}}
\put(30,10){\line(1,0){110}}
\put(30,10){\line(0,1){130}}
\put(70,50){\circle*{4}}
\put(70,50){\line(1,0){70}}
\put(70,50){\line(0,1){90}}
\put(50,90){\circle*{4}}
\put(50,90){\line(1,0){90}}
\put(50,90){\line(0,1){50}}
\put(130,130){\circle*{4}}
\put(130,130){\line(1,0){10}}
\put(130,130){\line(0,1){10}}
\put(10,70){\circle*{4}}
\put(10,70){\line(1,0){130}}
\put(10,70){\line(0,1){70}}
\put(110,30){\circle*{4}}
\put(110,30){\line(1,0){30}}
\put(110,30){\line(0,1){110}}
\put(90,110){\circle*{4}}
\put(90,110){\line(1,0){50}}
\put(90,110){\line(0,1){30}}
\end{picture}
\begin{picture}(220,140)
\put(-60,60){$\mapsto$}
\put(0,0){\makebox[0pt][l]{\framebox(140,140)}}
\put(0,20){\line(1,0){80}} \put(0,40){\line(1,0){20}}
\put(0,60){\line(1,0){20}} \put(0,20){\line(1,0){20}}
\put(40,40){\line(1,0){20}} \put(40,60){\line(1,0){20}}
\put(20,0){\line(0,1){60}} \put(40,0){\line(0,1){20}}
\put(60,0){\line(0,1){20}} \put(80,0){\line(0,1){20}}
\put(40,40){\line(0,1){20}} \put(60,40){\line(0,1){20}}
\put(65,5){$e$} \put(45,45){$e$} \put(5,45){$e$}
\thicklines \put(90,10){\circle*{4}} \put(90,10){\line(1,0){50}}
\put(90,10){\line(0,1){130}} \put(70,50){\circle*{4}}
\put(70,50){\line(1,0){70}} \put(70,50){\line(0,1){90}}
\put(50,90){\circle*{4}} \put(50,90){\line(1,0){90}}
\put(50,90){\line(0,1){50}} \put(130,130){\circle*{4}}
\put(130,130){\line(1,0){10}} \put(130,130){\line(0,1){10}}
\put(10,70){\circle*{4}} \put(10,70){\line(1,0){130}}
\put(10,70){\line(0,1){70}} \put(30,30){\circle*{4}}
\put(30,30){\line(1,0){110}} \put(30,30){\line(0,1){110}}
\put(110,110){\circle*{4}} \put(110,110){\line(1,0){30}}
\put(110,110){\line(0,1){30}}\put(160,60){$=\Theta_{v,w} \quad \quad \quad \quad \ \ \qed$}
\end{picture}
\end{figure}
\end{example}

In Section~7.3 we will need two properties of ${\bf b}(w)$, whose proofs also follow from
Lemma~\ref{lemma:transitioned}:

\begin{lemma}
\label{lemma:bfact1}
Suppose $w$ is covexillary and $\lambda=\lambda(w)=(\lambda_1\geq\lambda_2\geq\cdots\geq \lambda_{\ell}>0)$.
Furthermore, set
\[\{i_1<i_2<\cdots<i_m\}=\{i \ |\ 1\leq i\leq \ell, \lambda_i>\lambda_{i+1}\}.\]
Then
\[\Ess(w)=\{(b_i,\lambda_i-i+b_i)\;|\;i=i_1,\dots,i_m\}.\]
In other words, there is a one to one bijection between $\Ess(w)$ and the righthand corners of $\lambda$
(drawn in French notation).
\end{lemma}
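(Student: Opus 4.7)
The plan is to argue by induction on $k(w)=\sum_{\mathfrak{e}\in \Ess(w)}r_{\mathfrak{e}}^w$, using Lemma~\ref{lemma:transitioned} as the driving mechanism. First I would handle the base case $k(w)=0$: since every essential box then has rank $0$, $D(w)$ must be exactly the French Young diagram $\lambda=\lambda(w)$ placed with its southwest corner at $(1,1)$. In this situation $B(w)=\lambda$, the essential boxes are precisely the northeast corners of $\lambda$, and a direct check (using that $\lambda_{i+1}\leq \lambda_i$ forces $b_i=i$ in the formula $b_i=\max_m\{B(w)_m\geq \lambda_i+m-i\}$) shows the essential box for the corner indexed by $i\in\{i_1,\dots,i_m\}$ sits at $(i,\lambda_i)=(b_i,\lambda_i-i+b_i)$.

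For the inductive step I pick an essential box $(i_0,j_0)$ with $r_{i_0,j_0}^w>0$ and apply Lemma~\ref{lemma:transitioned} to obtain the covexillary transitioned permutation $w'$, which satisfies $\lambda(w')=\lambda(w)$, $k(w')=k(w)-1$, and whose essential set agrees with $\Ess(w)$ except that $(i_0,j_0)$ is replaced by $(i_0-1,j_0-1)$. By the inductive hypothesis applied to $w'$, the essential set of $w'$ has the claimed form, so in particular the moved box equals $(b_i(w'),\lambda_i-i+b_i(w'))$ for some (unique) corner index $i\in\{i_1,\dots,i_m\}$. Since $\lambda(w)=\lambda(w')$, what remains is to show that $b_i(w)=b_i(w')+1$ while $b_j(w)=b_j(w')$ for all other corners~$j$; this reduces the whole matter to understanding how $B(w)$ compares to $B(w')$.

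For that comparison I would use the characterization $B(w)_m=\max\{c:\exists (a,c)\in \Ess(w)\cup\{(1,1)\},\ a\geq m\}$, which is immediate from $B(w)$ being the smallest French Young diagram containing $\Ess(w)\cup\{(1,1)\}$. Since the transition only replaces $(i_0,j_0)$ by $(i_0-1,j_0-1)$, I expect $B(w)$ and $B(w')$ to agree outside of the rows in the interval $[i_0-1+1,i_0]=\{i_0\}$ (where $B(w)_{i_0}$ may drop to $B(w')_{i_0}$) and possibly the column contribution coming from the old box. The hypotheses (\ref{eqn:someineq})---i.e.\ $b_1\leq \cdots\leq b_\ell$ and $\lambda_1-1+b_1\geq\cdots\geq \lambda_\ell-\ell+b_\ell$---plus the SE-to-NW staircase shape of $\Ess(w)$ guaranteed by Definition-Theorem~\ref{defthm:covex}(iii) should imply that the only corner index affected by the change is the index~$i$ of the moved box, and that for this index the inequality $B(w)_{b_i(w)}\geq \lambda_i+b_i(w)-i$ tightens by exactly one unit, forcing $b_i(w)=b_i(w')+1$.

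The main obstacle, as usual with such transition arguments, is the bookkeeping in the last paragraph: one must verify that the decrement in $B(w)$ is localized exactly at row $i_0$ and by exactly one column, and check that this does not spuriously change the $b_j$ for any other corner $j\neq i$. I would carry this out case by case according to whether the adjacent essential boxes (in the SE-NW order guaranteed by Definition-Theorem~\ref{defthm:covex}(iii)) occupy the same row/column as $(i_0,j_0)$. Once this is confirmed, the induction closes and the explicit formula $\Ess(w)=\{(b_i,\lambda_i-i+b_i):i=i_1,\dots,i_m\}$---together with the bijection to the right-hand corners of $\lambda$ in French notation---follows immediately.
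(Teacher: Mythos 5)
Your plan is in substance the paper's argument: both rest entirely on Lemma~\ref{lemma:transitioned} and on pushing $w$ by transitions toward the dominant permutation whose diagram is $\lambda(w)$, and your base case matches the paper's endpoint $w^{(M)}$. The organizational difference is that the paper runs the whole chain $w=w^{(0)}\mapsto\cdots\mapsto w^{(M)}$ at once, uses Lemma~\ref{lemma:transitioned}(iii) only to record that each essential box travels along a fixed southwest--northeast antidiagonal (so $\Ess(w)$ is matched bijectively with the corners of $\lambda$), and then pins the coordinates in one stroke: by the staircase property of Definition--Theorem~\ref{defthm:covex}(iii), each box of $\Ess(w)$ is the northeast-extremal box of $B(w)$ on its antidiagonal, and by the very definition of $b_i$ that extremal box is $(b_i,\lambda_i-i+b_i)$. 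No per-step tracking of ${\bf b}$ or $B$ is ever needed.

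The part you defer is where your version gets delicate, and your anticipated form of it is not correct as stated: the passage from $B(w)$ to $B(w')$ is in general \emph{not} ``localized exactly at row $i_0$ and by exactly one column.'' If, say, $(i_0,j_0)$ is the only essential box, then every row of $B$ up to $i_0$ shrinks and row $i_0$ disappears entirely; moreover $b_j$ at non-corner indices $j$ can genuinely change (harmless for this lemma, but it shows ${\bf b}$ is not simply decremented in one slot). What you actually need is only the corner-level statement: $b_i(w)=b_i(w')+1$ at the corner $i$ carrying the moved box, and $b_j(w)=b_j(w')$ at the other corners. This is true, but the clean proof is exactly the paper's extremality observation: $(i_0,j_0)\in\Ess(w)$ gives $B(w)_{i_0}\ge j_0$, while the staircase property forces $B(w)_{i_0+1}\le j_0$, so $b_i(w)=i_0$; and for a corner $j\ne i$, since $B(w)\supseteq B(w')$ one has $b_j(w)\ge b_j(w')$, and if the inequality were strict the only possible new contribution to $B(w)$ at row $b_j(w')+1\le i_0$ is the column $j_0$, which would place the (unchanged) essential box of corner $j$ strictly below and strictly west of $(i_0,j_0)$ inside $\Ess(w)$, contradicting the staircase. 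So your induction closes once you argue at the level of the corner diagonals rather than at the level of $B$ itself --- at which point you have essentially rederived the paper's one-shot argument, which is why its organization is shorter.
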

\begin{proof}
Consider a sequence
\[w=w^{(0)}\mapsto w^{(1)}\mapsto \cdots \mapsto w^{(M)}\]
where
$w^{(i+1)}$ is the transitioned permutation of $w^{(i)}$, and $w^{(M)}$ has the property
that the rank of each of its essential set boxes is $0$ (this permutation is often known as
``dominant''). The diagram of $w^{(M)}$ is a Young diagram (drawn in French notation), with
its southwest corner at $(1,1)$. By Lemma~\ref{lemma:transitioned}(ii), this Young diagram must be $\lambda(w)$.

The essential set boxes of $w^{(M)}$ are precisely boxes at the end of the rows $\{i_1<i_2<\cdots<i_m\}$.
By Lemma~\ref{lemma:transitioned}(iii), it follows that each such ${\mathfrak e}'\in \Ess(w^{(M)})$ (say in
row $i\in  \{i_1<i_2<\cdots<i_m\}$
corresponds one to one with ${\mathfrak e}\in\Ess(w)$ that lives on the same southwest-northeast
diagonal. However, by the definition of $b_i$, ${\mathfrak e}$ must coordinates $(b_i,\lambda_i-i+b_i)$, since
both are the extremal box of $B(w)$ on the said diagonal.
\end{proof}

\begin{lemma}
\label{lemma:bfact2}
Under the same assumptions as Lemma~\ref{lemma:bfact1}, we have
\[b_i=\max(b_{i_{k+1}}-i_{k+1}+i,b_{i_k})\]
(define $b_{i_0}=b_0=0$).
\end{lemma}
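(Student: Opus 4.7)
The plan is to use Lemma~\ref{lemma:bfact1} to describe $B(w)$ completely as a ``staircase'' Young diagram with prescribed corners, and then to compute $b_i$ directly from its definition $b_i=\max_m\{B(w)_m\geq \lambda_i+m-i\}$ by case analysis on where the candidate row index $m$ sits relative to those corners.

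For the setup, note that when $i_k<i\leq i_{k+1}$ the sequence $\lambda$ is constant on the interval $(i_k,i_{k+1}]$, so $\lambda_i=\lambda_{i_{k+1}}$ and the defining inequality becomes $B(w)_m\geq \lambda_{i_{k+1}}+m-i$. By Lemma~\ref{lemma:bfact1}, the corners of $B(w)$ are the points $(b_{i_j},c_{i_j})$ with $c_{i_j}:=\lambda_{i_j}-i_j+b_{i_j}$, and since $B(w)$ is the smallest Young diagram containing these corners and the box $(1,1)$, its row lengths are piecewise constant: $B(w)_m=c_{i_j}$ precisely when $b_{i_{j-1}}<m\leq b_{i_j}$ (using the convention $b_{i_0}:=0$), and $B(w)_m=0$ for $m>b_{i_m}$.

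With this in hand, I would partition the candidate values of $m$ into three regions relative to $b_{i_{k+1}}$: (i) $m\leq b_{i_k}$, (ii) $b_{i_k}<m\leq b_{i_{k+1}}$, and (iii) $m>b_{i_{k+1}}$. In region (ii) one has $B(w)_m=c_{i_{k+1}}$, so the inequality collapses to $m\leq b_{i_{k+1}}-i_{k+1}+i$; since $i\leq i_{k+1}$, this upper bound is at most $b_{i_{k+1}}$, and (whenever it exceeds $b_{i_k}$) it contributes the candidate $b_{i_{k+1}}-i_{k+1}+i$. In region (i), substituting $m=b_{i_k}$ and using the strict inequality $\lambda_{i_k}>\lambda_{i_{k+1}}$ together with $i>i_k$ verifies that the defining inequality holds, so this region contributes the candidate $b_{i_k}$. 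Taking the maximum of the two surviving candidates yields the claimed formula.

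The one nontrivial step is to exclude region (iii). For $m$ in the plateau $b_{i_{j-1}}<m\leq b_{i_j}$ with $j\geq k+2$, the constraint becomes $m\leq c_{i_j}-\lambda_{i_{k+1}}+i$; combining this with the strict inequality $c_{i_{k+1}}>c_{i_j}$ between corners of a Young diagram (rewritten as $b_{i_j}-b_{i_{k+1}}\leq (\lambda_{i_{k+1}}-\lambda_{i_j})+(i_j-i_{k+1})-1$) forces $i>i_{k+1}+1$, contradicting $i\leq i_{k+1}$. Hence region (iii) is empty, and this is the main obstacle of the argument, though it resolves cleanly from the above observation. The boundary cases $k=0$ (covered by $b_{i_0}=0$) and $k+1=m$ (where region (iii) is automatically empty) fit into the same framework with no additional ideas.
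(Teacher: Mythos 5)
Your overall strategy---use Lemma~\ref{lemma:bfact1} to pin down the row lengths of $B(w)$ and then locate $b_i$ by a plateau-by-plateau case analysis---is sound, and your treatment of regions (i) and (ii) is correct; in substance it is a computational rendering of the paper's picture argument, which also rules out the bad locations by appealing to Lemma~\ref{lemma:bfact1}. However, the justification you give for the step you yourself single out as the crux (emptiness of region (iii)) is flawed: the points $(b_{i_j},c_{i_j})$ are the essential boxes of $w$, but they need not be genuine corners of $B(w)$, and their columns need not be \emph{strictly} decreasing. In the paper's own Example~\ref{exa:7531462} ($w=7531462$, $\lambda=(4,2,1)$, ${\bf b}(w)=(2,4,6)$) the essential boxes $(4,4)$ and $(6,4)$ lie in the same column, so $c_{i_2}=c_{i_3}=4$ and your rewritten inequality $b_{i_3}-b_{i_2}\le(\lambda_{i_2}-\lambda_{i_3})+(i_3-i_2)-1$ reads $2\le 1$. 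Thus the strict inequality $c_{i_{k+1}}>c_{i_j}$ you invoke is false in general (an essential box can sit in the interior of a vertical edge of $B(w)$), and the inequality you derive from it can fail.

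Fortunately only the weak inequality is needed, so the argument is easily repaired. For $r>b_{i_{k+1}}$ your plateau formula gives $B(w)_r=c_{i_j}$ for some $j\ge k+2$ (or $B(w)_r=0$), and $c_{i_j}\le c_{i_{k+1}}$ by (\ref{eqn:someineq}); hence admissibility of such an $r$ would force $\lambda_{i_{k+1}}+r-i\le c_{i_{k+1}}=\lambda_{i_{k+1}}-i_{k+1}+b_{i_{k+1}}$, i.e.\ $r\le b_{i_{k+1}}-i_{k+1}+i\le b_{i_{k+1}}$, contradicting $r>b_{i_{k+1}}$. (Equivalently, rerunning your own arithmetic with the weak inequality yields $i>i_{k+1}$, which already contradicts $i\le i_{k+1}$; nothing as strong as $i>i_{k+1}+1$ is required.) For the same reason you should replace the phrase ``the corners of $B(w)$ are the points $(b_{i_j},c_{i_j})$'' by the statement you actually use, namely the row-length formula $B(w)_m=c_{i_j}$ for $b_{i_{j-1}}<m\le b_{i_j}$, which is correct and needs only the weak monotonicity recorded in (\ref{eqn:someineq}). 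With these adjustments your proof is complete.
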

\begin{proof}
To see this, consider the following picture:

\begin{figure}[h]\label{figure1}
\begin{picture}(150,110)
\put(20,100){(a)} \put(-10,70){\line(1,0){30}}
\put(20,70){\line(0,-1){40}} \put(20,30){\line(1,0){40}}
\put(60,30){\line(0,-1){30}}
\put(10,70){\line(0,-1){10}} \put(10,60){\line(1,0){10}}
\put(20,80){
$(b_{i_{k+1}},\lambda_{i_{k+1}}-i_{k+1}+b_{i_{k+1}})$}
\put(25,75){\line(-1,-1){10}}
\put(10,50){\line(1,0){10}} \put(10,50){\line(0,-1){10}}
\put(10,40){\line(1,0){10}}
\put(23,58){$(b_{i},\lambda_i-i+b_i)$}
\put(25,55){\line(-1,-1){10}}
\put(50,20){\line(0,1){10}} \put(50,20){\line(1,0){10}}
\put(50,40){$(b_{i_{k}},\lambda_{i_{k}}\!-i_{k}\!+\!b_{i_{k}})$}
\put(66,35){\line(-1,-1){10}}
\put(-10,0){\line(0,1){70}} \put(-10,0){\line(1,0){70}}
\put(-5,10){$B(w)$}
\end{picture}
\begin{picture}(60,110)
\put(20,100){(b)} \put(-10,70){\line(1,0){30}}
\put(20,70){\line(0,-1){40}} \put(20,30){\line(1,0){40}}
\put(60,30){\line(0,-1){30}}
\put(10,70){\line(0,-1){10}} \put(10,60){\line(1,0){10}}
\put(20,80){
$(b_{i_{k+1}},\lambda_{i_{k+1}}-i_{k+1}+b_{i_{k+1}})$}
\put(25,75){\line(-1,-1){10}}
\put(30,20){\line(1,0){10}} \put(30,20){\line(0,1){10}}
\put(40,20){\line(0,1){10}}
\put(50,50){ $(b_{i},\lambda_i-i+b_i)$}
\put(53,45){\line(-1,-1){20}}
\put(50,20){\line(0,1){10}} \put(50,20){\line(1,0){10}}
\put(70,35){$(b_{i_{k}},\lambda_{i_{k}}-i_{k}+b_{i_{k}})$}
\put(66,35){\line(-1,-1){10}}
\put(-10,0){\line(0,1){70}} \put(-10,0){\line(1,0){70}}
\put(-5,10){$B(w)$}
\end{picture}
\caption{\label{caption:bfact2} Proof of Lemma~\ref{lemma:bfact2}}
\end{figure}

\medskip

Here, (a) is the case when
$b_i=b_{i_{k+1}}-i_{k+1}+i$, and (b) is the case when $b_i=b_{i_k}$.
It remains to show that ($b_i,\lambda_i-i+b_i$) has to be either on the vertical
boundary defined by $(b_{i_{k+1}},\lambda_{i_{k+1}}-i_{k+1}+b_{i_{k+1}})$, which is case (a),
or on horizontal boundary defined by $(b_{i_{k}},\lambda_{i_{k}}-i_{k}+b_{i_{k}})$, which is
case (b).

The only concern is if $(b_i,\lambda_i -i +b_i)$ appears strictly east of 
the vertical boundary in
case (a) or north of the horizontal boundary in case (b). However, this implies that $\Ess(w)$
contains a box not associated to a corner of $\lambda$, which contradicts Lemma~\ref{lemma:bfact1}.
\end{proof}

\section{Combinatorial formulae for multiplicity and Hilbert series of ${\mathcal O}_{e_v,X_w}$}

We now arrive at our formulae for multiplicity of $e_v\in X_w$ and the Hilbert series for ${\mathcal O}_{e_v,X_w}$
in the case $w$ is covexillary.

\begin{theorem}
\label{thm:flaggedrule}
${\rm mult}_{e_v}(X_w)$ counts the number of flagged semistandard Young tableaux of shape $\lambda(w)$ whose
rows are bounded by ${\bf b}(\Theta_{v,w})$.
\end{theorem}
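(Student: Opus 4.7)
The proof plan is essentially to combine Theorem~\ref{thm:basic}(V) with Theorem~\ref{thm:prime}, together with the pipe dream/tableau bijection described in Section~5.2.

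First, I would verify that the hypotheses of Theorem~\ref{thm:basic}(V) hold for the term order $\prec_{v,w,\pi}$ chosen in Section~4.2. By Theorem~\ref{thm:standard} (the Gr\"{o}bner basis theorem), the essential minors form a Gr\"{o}bner basis, so ${\rm init}_{\prec_{v,w,\pi}} I_{v,w}$ is generated by the initial terms of these minors. By Theorem~\ref{thm:prime}, this initial ideal admits a prime decomposition as an intersection of squarefree monomial ideals indexed by pipe dreams for $\Theta_{v,w}$, and the associated Stanley-Reisner complex $\Delta_{v,w,\pi}$ is a vertex decomposable (hence shellable) ball or sphere; in particular the initial ideal is reduced and equidimensional.

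Next, I would invoke Theorem~\ref{thm:basic}(V): under these hypotheses, ${\rm mult}_{e_v}(X_w)$ equals the number of facets of $\Delta_{v,w,\pi}$, equivalently the number of minimal primes in the prime decomposition of ${\rm init}_{\prec_{v,w,\pi}} I_{v,w}$. By Theorem~\ref{thm:prime}, this count equals the number of pipe dreams for $\Theta_{v,w}$.

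Finally, the construction in Section~5.2 provides an explicit bijection between pipe dreams for a covexillary permutation (here $\Theta_{v,w}$) and semistandard Young tableaux of shape $\lambda(\Theta_{v,w})$ flagged by ${\bf b}(\Theta_{v,w})$, obtained by recording in each box of the initial placement the row index of its $+$ after the ladder moves. Since by Definition--Lemma~\ref{deflemma:Theta} we have $\lambda(\Theta_{v,w})=\lambda(w)$, this completes the chain of equalities. Essentially all of the substantive work is already done in Theorems~\ref{thm:standard} and~\ref{thm:prime}; the present statement is a clean corollary, and the only real task here is the bookkeeping of putting these pieces together and confirming that the pipe-dream-to-tableau correspondence of Section~5.2 is indeed a bijection onto all flagged SSYT of shape $\lambda(w)$ bounded by ${\bf b}(\Theta_{v,w})$. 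The main (and only nontrivial) obstacle, which has already been addressed in the preceding sections, is establishing the reduced equidimensional initial ideal together with its prime decomposition; once these are in hand, Theorem~\ref{thm:flaggedrule} follows immediately.
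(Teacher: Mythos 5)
Your proposal matches the paper's own proof, which derives Theorem~\ref{thm:flaggedrule} immediately from Theorem~\ref{thm:standard}, Theorem~\ref{thm:prime} and Theorem~\ref{thm:basic}(V); note that the bijection of facets with flagged semistandard tableaux of shape $\lambda(w)$ bounded by ${\bf b}(\Theta_{v,w})$ is already asserted in Theorem~\ref{thm:prime}, so your extra check of the Section~5.2 correspondence is harmless but not needed at this point.
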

\begin{proof}
This is immediate from Theorem~\ref{thm:standard}, Theorem~\ref{thm:prime} and Theorem~\ref{thm:basic}(V).
\end{proof}

The following result generalizes the determinantal formula (with the same proof) from
\cite{WYIII} for cograssmannian permutations:

\begin{theorem}
\label{thm:detrule}
We have the following expression for multiplicity as a determinant of a matrix with binomial coefficient
entries:
\[{\rm mult}_{e_v}(X_w)=\det\left({b_i+\lambda_i-i+j-1\choose \lambda_i-i+j}\right)_{1\leq i,j\leq \ell(\lambda)},\]
where $\ell(\lambda)$ is the number of nonzero parts of $\lambda$ and ${\bf b}={\bf b}(\Theta_{v,w})$.
\end{theorem}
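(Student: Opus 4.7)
The plan is to derive Theorem~\ref{thm:detrule} directly from Theorem~\ref{thm:flaggedrule} by invoking the flagged Jacobi--Trudi determinantal identity and then performing a principal specialization. By Theorem~\ref{thm:flaggedrule}, ${\rm mult}_{e_v}(X_w)$ equals the number $N$ of semistandard Young tableaux of shape $\lambda=\lambda(w)=(\lambda_1\geq\cdots\geq\lambda_{\ell(\lambda)}>0)$ with the entries in row $i$ bounded by $b_i$, where ${\bf b}=(b_1,\ldots,b_{\ell(\lambda)})={\bf b}(\Theta_{v,w})$. I will express $N$ as a principal specialization of the flagged Schur polynomial, and read off the binomial coefficient entries.

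First I would verify that the flagging is weakly increasing, i.e., $b_1\leq b_2\leq\cdots\leq b_{\ell(\lambda)}$. This is the first inequality in (\ref{eqn:someineq}), applied to $\Theta_{v,w}$ in place of $w$; it is a property of ${\bf b}(\Theta_{v,w})$ that follows directly from the definition of the $b_i$ in Section~4.2 together with Lemma~\ref{lemma:bfact1}. This monotonicity hypothesis is exactly what is needed to invoke the flagged Jacobi--Trudi identity (due to Gessel, cf.\ Wachs), which states that the generating function of flagged SSYT of shape $\lambda$ with flag ${\bf b}$ equals
\[s_\lambda^{{\bf b}}(x_1,x_2,\ldots)=\det\!\bigl(h_{\lambda_i-i+j}(x_1,\ldots,x_{b_i})\bigr)_{1\leq i,j\leq \ell(\lambda)}.\]

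Next I would apply the principal specialization $x_1=x_2=\cdots=1$. The standard identity $h_k(\underbrace{1,\ldots,1}_{b})=\binom{b+k-1}{k}$ then gives
\[h_{\lambda_i-i+j}(\underbrace{1,\ldots,1}_{b_i})=\binom{b_i+\lambda_i-i+j-1}{\lambda_i-i+j},\]
so the determinant in the flagged Jacobi--Trudi identity specializes to exactly the determinant in the statement of Theorem~\ref{thm:detrule}. Combined with Theorem~\ref{thm:flaggedrule}, this yields the claimed formula.

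This is essentially the argument used in \cite{WYIII} in the cograssmannian (Grassmannian) case; no new obstacle appears here, since all the needed ingredients---the flagged SSYT interpretation, the monotonicity of the flag, and the classical identities---are already in place. The only small point one must verify carefully is the monotonicity $b_1\leq\cdots\leq b_{\ell(\lambda)}$, and this is where Section~4.2's construction of ${\bf b}$ (together with Definition--Lemma~\ref{deflemma:Theta}, which ensures $\Theta_{v,w}$ is covexillary with $\lambda(\Theta_{v,w})=\lambda(w)$) plays its role.
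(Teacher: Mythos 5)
Your proposal is correct and follows essentially the same route as the paper: Theorem~\ref{thm:flaggedrule} combined with the flagged Jacobi--Trudi identity (the paper's (\ref{eqn:GesselViennot}), cited from Manivel) specialized at $x_i=1$ using $h_k(1,\ldots,1)=\binom{b+k-1}{k}$. Your explicit check that $b_1\leq\cdots\leq b_{\ell(\lambda)}$ (via (\ref{eqn:someineq}) applied to $\Theta_{v,w}$) is a detail the paper leaves implicit in its citation, but it is a correct and welcome precaution rather than a different argument.
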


The proof of Theorem~\ref{thm:detrule} is immediate from Theorem~\ref{thm:flaggedrule} once we have
discussed the determinantal expression for flagged Schur functions in Section~6.3.

\begin{example}
Continuing our example from the previous section, it follows from Example~\ref{exa:somepipes},
Example~\ref{exa:correspondtab} and Theorem~\ref{thm:flaggedrule} that the
multiplicity of $X_{7531462}$ at $e_{5123746}$ is $2$.
To illustrate Theorem~\ref{thm:detrule}, note that
since $\lambda_1=4, \lambda_2=2, \lambda_3=1$, $b_1=1, b_2=3, b_3=3$, Theorem \ref{thm:detrule} asserts that the multiplicity
$${\rm mult}_{e_v}(X_w)=
\begin{vmatrix}
{b_1+\lambda_1-1\choose \lambda_1}&{b_1+\lambda_1\choose \lambda_1+1}& {b_1+\lambda_1+1\choose \lambda_1+2}\\
{b_2+\lambda_2-2\choose \lambda_2-1}&{b_2+\lambda_2-1\choose \lambda_2}& {b_2+\lambda_2\choose \lambda_2+1}\\
{b_3+\lambda_3-3\choose \lambda_3-2}&{b_3+\lambda_3-2\choose \lambda_3-1}& {b_3+\lambda_3-1\choose \lambda_3}\end{vmatrix}
=\begin{vmatrix}
{4\choose 4}&{5\choose 5}& {6\choose 6}\\
{3\choose 1}&{4\choose 2}&{5\choose 3}\\
{1\choose -1}&{2\choose 0}&{3\choose 1}\end{vmatrix}
=\begin{vmatrix}1&1&1\\3&6&10\\0&1&3\end{vmatrix}=2,$$
in agreement with our previous computation.\qed
\end{example}

\begin{example}
A.~Woo~\cite{Woo:catalan} proved that when $w=(n+2)23\ldots (n+1)1$,
the multiplicity of the Schubert variety $X_{w}\subseteq {\rm
Flags}({\mathbb C}^{n+2})$ at the most singular point $e_{id}$
is given by the Catalan number $C_{n}=\frac{1}{n+1}{2n\choose n}$. Moreover, he conjectured that
the largest value multiplicity can attain for $v,w\in S_{n+2}$ is this Catalan number.
Woo's permutation is covexillary, and this multiplicity problem is
also solved by Theorem~\ref{thm:flaggedrule}. \qed
\end{example}

A richer invariant than multiplicity is the Hilbert series of ${\mathcal O}_{e_v,X_w}$.
In order to state our formula for it, recall the notion of flagged set-valued
semistandard tableaux from \cite{KMY}. A {\bf set-valued, semistandard filling} of $\lambda$ \cite{Buch:KLR}
is an assignment of non-empty sets to each box of $\lambda$ so that each entry of a box
is weakly smaller than each entry to its right, and strictly smaller than any entry
strictly below it. Such a filling is flagged by ${\bf b}={\bf b}(w)$ if each entry in
a row $i$ is at most $b_i$.

\begin{example}
Continuing Example~\ref{exa:correspondtab},
the additional flagged set-valued semistandard tableau for the flagging ${\bf b}={\bf b}(\Theta_{v,w})$
that is not (ordinary) semistandard is
\[\ktableau{{1}&{1}&{1}&{1}\\{2}&{2,3}\\{3}}.\]
\qed
\end{example}

Recall
\[T_{v,w}=\langle {\hat f}: {\hat f} \mbox{\ is the lowest (standard) degree component of $f\in I_{v,w}$}\rangle\]
is the (homogeneous) ideal of the projectivized tangent cone of ${\mathcal
N}_{v,w}$.

\begin{theorem}
\label{thm:Hilbert}
The ${\mathbb Z}$-graded Hilbert series of ${\mathcal O}_{e_v,X_w}$ and ${\mathbb C}[{\bf z}^{(v)}]/T_{v,w}$ are given
respectively by
\[{\rm Hilb}({\mathcal O}_{e_v,X_w},t):=\sum_{i\geq 0}\dim({\mathfrak m}_{e_v}^{i}/{\mathfrak m}_{e_v}^{i+1})t^i=G_{\lambda}(t)/(1-t)^{{n\choose 2}}\]
and
\[{\rm Hilb}({\mathbb C}[{\bf z}^{(v)}]/T_{v,w},t):=\sum_{i\geq 0}\dim\left({\mathbb C}[{\bf z}^{(v)}]/T_{v,w}\right)_i t^i=G_{\lambda}(t)/(1-t)^{\ell(w_0 v)},\]
where
\[G_{\lambda}(t)=\sum_{k\geq |\lambda|}(-1)^{k-|\lambda|}(1-t)^k\times \#{\rm SetSSYT}(\lambda,{\bf b},k)\]
and $\#{\rm SetSSYT}(\lambda,{\bf b},k)$ equals the number of flagged set-valued semistandard Young tableaux
of shape $\lambda$ with flag ${\bf b}={\bf b}(\Theta_{v,w})$ and which uses exactly $k$ entries.
\end{theorem}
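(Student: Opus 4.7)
The plan is to apply Theorem~\ref{thm:basic}(VI) and translate its alternating interior-face sum into the desired enumeration by flagged set-valued semistandard Young tableaux. By Theorem~\ref{thm:prime}, the complex $\Delta_{v,w,\pi}$ is a shellable ball or sphere, so the hypotheses of (VI) are met and one obtains
\[
{\rm Hilb}(\mathcal{O}_{e_v,X_w},t) = G_{v,w}(t)/(1-t)^{\binom{n}{2}}, \qquad G_{v,w}(t) = \sum_{k\ge 0} (-1)^k (1-t)^{\ell(w_0 w)+k} f_k,
\]
with $f_k$ the number of interior faces of $\Delta_{v,w,\pi}$ of codimension $k$. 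The companion formula for ${\rm Hilb}(\mathbb{C}[{\bf z}^{(v)}]/T_{v,w},t)$ will follow by dividing by the factor $(1-t)^{\ell(v)}$ coming from (\ref{eqn:KL}), exactly as in the last steps of the proof of (VI). Since a facet of $\Delta_{v,w,\pi}$ has $\ell(w)-\ell(v)$ vertices out of a total vertex set of size $\ell(w_0 v)$, the complement of a codimension-$k$ face has $\ell(w_0 w)+k = |\lambda(w)|+k$ elements. After reindexing $k \mapsto k - |\lambda|$, the theorem thus reduces to the combinatorial identity $f_{k-|\lambda|} = \#{\rm SetSSYT}(\lambda,\mathbf{b},k)$ for $k \ge |\lambda|$.

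To establish this I would extend the pipe-dream-to-tableau bijection of Section~\ref{sec:Flagged tableaux and pipe dreams} to the set-valued setting. By Theorem~\ref{thm:prime}, a subset $F$ of the variables of $\mathbb{C}[{\bf z}^{(v)}]$ is a face of $\Delta_{v,w,\pi}$ if and only if its complement, reindexed via the shuffling $\pi$ and viewed inside $B(\Theta_{v,w})$, is a \emph{set-valued pipe dream} for $\Theta_{v,w}$, i.e.\ a placement of $+$'s containing at least one ordinary pipe dream. The pipe-dream-to-tableau map should extend naturally to this setting: pick any honest pipe dream it contains, record the associated flagged SSYT, then adjoin each remaining $+$ as an extra entry in the box of $\lambda$ that the $+$ would occupy were it slid diagonally back into $\lambda$ via inverse $2\times 2$ moves. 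Routine arguments with the moves should show that the result is a well-defined flagged set-valued semistandard tableau of shape $\lambda$ and flag $\mathbf{b}(\Theta_{v,w})$, independent of the base pipe dream chosen, and that this is a bijection preserving the total number of $+$'s (equivalently, of entries).

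The technical crux --- and main obstacle --- is characterizing which set-valued pipe dreams correspond to \emph{interior} faces (rather than arbitrary faces) and verifying the codimension/entry-count matching. The cleanest route is to exploit the reduction used in the proofs of Theorems~\ref{thm:standard} and~\ref{thm:prime}: after shuffling coordinates and multiplying by an affine space, $\mathrm{init}_{\prec_{v,w,\pi}} I_{v,w}$ agrees with the Gr\"obner limit of a covexillary Schubert determinantal ideal. In \cite{KMY}, the $K$-polynomial of such a limit is given as a flagged set-valued tableau sum, via Buch's Grothendieck-polynomial formula \cite{Buch:KLR}. On the Stanley-Reisner side, the $K$-polynomial of the same quotient coincides (by \cite{Knutson.Miller:subword}, as quoted in the proof of Theorem~\ref{thm:basic}(VI)) with the numerator $G_{v,w}(t)$ above. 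Equating the two expressions for this common $K$-polynomial yields $G_{v,w}(t) = G_\lambda(t)$, which plugged back into the Hilbert series formulae proves both claims of the theorem.
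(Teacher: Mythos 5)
Your overall strategy is the paper's: invoke Theorem~\ref{thm:basic}(VI) (legitimate by Theorem~\ref{thm:prime}), peel off the $(1-t)^{\ell(v)}$ factor for the second displayed formula, and reduce everything to the claim that the codimension-$k$ interior faces of $\Delta_{v,w,\pi}$ are counted by flagged set-valued semistandard tableaux with $|\lambda|+k$ entries (your bookkeeping $\ell(w_0w)=|\lambda(w)|$ is correct under the paper's conventions, since here $|D(w)|=\ell(w_0w)$). The problem is how you propose to establish that face-counting claim. What you label the ``technical crux and main obstacle'' --- deciding which set-valued placements are \emph{interior} faces and matching entries with codimension --- does not need to be built from scratch: it is precisely the last paragraph of Theorem~\ref{thm:KMY} (quoted from \cite{KMY}), transported to $\Delta_{v,w,\pi}$ through the identification ${\rm init}_{\prec_{v,w,\pi}}I_{v,w}=H_{v,w}$ established in the proofs of Theorems~\ref{thm:standard} and~\ref{thm:prime}; the crossing by affine space is a join with a full simplex, which preserves interior faces and their codimensions. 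That citation, together with Theorem~\ref{thm:basic}(VI), \emph{is} the paper's proof. Your route 1 leaves exactly this step as ``routine arguments should show,'' so as written the proposal is incomplete at the one step carrying the content.

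Your fallback route 2 has a genuine grading mismatch as stated: $G_{v,w}(t)$ is the numerator with respect to the standard $\mathbb{Z}$-grading, whereas the flagged set-valued tableau formula coming from \cite{KMY} and \cite{Buch:KLR} computes the $K$-polynomial for the torus multigrading, and the standard grading is not a coarsening of the torus grading (e.g.\ $I_{v,w}$ is torus-homogeneous but not homogeneous for the standard grading), so ``equating the two expressions for this common $K$-polynomial'' is not literally available. The repair is that the quotient in question is by a monomial ideal, hence both gradings are specializations of the fine $\mathbb{Z}^{n^2}$-grading, and the finely graded statement one needs is exactly the interior-face labeling of Theorem~\ref{thm:KMY} --- at which point route 2 collapses to the citation above, i.e.\ to the paper's own argument.
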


\begin{remark}
As with Theorem~\ref{thm:basic}(VI) one can straightforwardly write down multigraded Hilbert series that takes into
account both the standard grading and the usual action grading. We leave this as a remark since 
one requires a bunch of prerequisites about double Grothendieck polynomials (for covexillary permutations) from
\cite{KMY, WYIII} that we do not need otherwise in the text. \qed
\end{remark}

\begin{remark}
A permutation $w$ is {\bf cograssmannian} if it has a unique ascent,
at position $d$, i.e., $w(k)<w(k+1)$ if and only if $k=d$. Each
cograssmannian $w$ is clearly also covexillary. Moreover, there is a
bijective correspondence between $\lambda\subseteq d\times (n-d)$
and these cograssmannian permutations. Under this correspondence,
the multiplicity of a Grassmannian Schubert variety
$X_{\lambda}\subseteq Gr(d,{\mathbb C}^n)$ at a torus fixed point
$e_{\mu}$ can be computed using Theorem~\ref{thm:flaggedrule}.
Geometrically, this follows from the fact that the natural ``forgetting subspaces''
projection $\pi:{\rm Flags}({\mathbb C}^n)\twoheadrightarrow Gr(k,{\mathbb C}^n)$
restricts to a locally trivial fibration $X_w\to X_{\lambda}$ with fiber $P/B$ where
$P$ is the maximal parabolic such that $G/P\cong Gr(d,{\mathbb C}^n)$; see \cite[Example~1.2.3]{Brion}. \qed
\end{remark}

\begin{remark}
It is natural to wonder about relations between multiplicity and the Kazhdan-Lusztig polynomial, as might be seen
by comparing our formulae with the Kazhdan-Lusztig polynomial formula for covexillary Schubert varieties \cite{Lascoux}. 
Small computations contraindicate any simple comparisons.
\end{remark}

\section{Proofs of the main theorems}

\subsection{Covexillary Schubert determinantal ideals}
    Let $\prec_{\rm antidiag}$ denote any term order that picks off the main antidiagonal (i.e., southwest to northeast main diagonal) term of
any minor of $Z$. We will use the following result:

\begin{theorem}[\cite{KMY}]
\label{thm:KMY}
Let $w\in S_n$ be covexillary.
The essential determinants of $I_{w}$ form a Gr\"{o}bner basis with respect to $\prec_{\rm antidiag}$.
Moreover, the initial ideal ${\rm init}_{\prec_{\rm antidiag}} I_{w}$ is reduced and equidimensional, with
prime decomposition
\[{\rm init}_{\prec_{\rm antidiag}}I_{w}=\bigcap_{\mathcal P} \langle z_{ij} : (i,j)\in {\mathcal P}\rangle,\]
where ${\mathcal P}$ is a pipe dream for $w$.

The Stanley-Reisner simplicial complex is a
homeomorphic to a vertex decomposable (and hence shellable) ball or sphere.

The interior faces of the complex are labeled by
set-valued semistandard Young tableaux of shape $\lambda$ and flagged by ${\bf b}(w)$, and the facets are labeled
by the subset of ordinary semistandard Young tableaux. The codimension $k$ interior faces are labeled by these
tableaux with $|\lambda|+k$ entries.
\end{theorem}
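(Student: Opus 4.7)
The plan is to prove the theorem by geometric vertex decomposition of the candidate Stanley-Reisner complex, combined with a Hilbert-function comparison. First I would define $\Delta$ combinatorially as the complex on the boxes of the ambient region $B(w)$ whose facets are the complements of pipe dreams of $w$, and prove by induction on $\ell(w)$ that $\Delta$ is vertex decomposable. For the inductive step, one picks a ``corner'' box $(i_0,j_0)$ associated to a carefully chosen essential set box, so that $\del_{(i_0,j_0)}\Delta$ and $\link_{(i_0,j_0)}\Delta$ are themselves the Stanley-Reisner complexes for covexillary Schubert determinantal ideals associated to permutations of strictly smaller length. The correct choice is dictated by the transition move encoded in Lemma~\ref{lemma:transitioned}, which decrements the rank at one essential set box while preserving covexillarity; the mitosis operation on pipe dreams realizes this move combinatorially. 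Vertex decomposability immediately delivers the shellable ball/sphere topology and equidimensionality.

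For the Gr\"obner basis claim and prime decomposition, I would run a Hilbert series comparison. The Schubert determinantal ideal $I_w$ is reduced, Cohen-Macaulay, of codimension $|\lambda|$, with multidegree equal to $\Schub_w$. Let $J=\bigcap_{\mathcal{P}}\langle z_{ij}:(i,j)\in\mathcal{P}\rangle$ over pipe dreams $\mathcal{P}$ of $w$. One checks $J\subseteq \mathrm{init}_{\prec_{\rm antidiag}} I_w$ by observing that the antidiagonal term of each essential minor lies in $J$: in the covexillary case, for every pipe dream $\mathcal{P}$ one of its $+$'s must land on a box of the antidiagonal of any given essential minor. Since $J$ is reduced, Cohen-Macaulay and of the same codimension by the vertex decomposition step, and the number of its minimal primes (facets of $\Delta$) matches the number of flagged SSYT of shape $\lambda(w)$ via the bijection of Section~5.2, a Jacobi-Trudi-style determinantal evaluation shows this equals the degree of $I_w$ coming from $\Schub_w$. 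Equal codimension and equal degree force $J=\mathrm{init}_{\prec_{\rm antidiag}} I_w$, which simultaneously yields the Gr\"obner basis property, reducedness, and the claimed prime decomposition.

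The tableau interpretation then follows from the correspondence between pipe dreams and flagged SSYT of shape $\lambda$ with flag $\mathbf{b}(w)$. Interior faces of codimension $k$ in $\Delta$ are non-faces that, after removing any single element, would still not lie in every minimal prime; equivalently, pipe-dream configurations with $k$ extra $+$'s beyond some facet that still avoid all prime components. Running the pipe-dream-to-tableau bijection on these extended configurations and recording multiple row-labels per box whenever several extra $+$'s share a common NE-diagonal produces exactly the flagged set-valued SSYT with $|\lambda|+k$ entries, establishing the final statement of the theorem.

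The main obstacle will be executing the vertex decomposition inductively while tracking the transition combinatorics: one must verify that after deletion and link at $(i_0,j_0)$ the resulting subcomplexes are genuinely of the same form for the transitioned permutations, that the rank conditions and essential sets transform as Lemma~\ref{lemma:transitioned} predicts, and that covexillarity is preserved throughout. Once this bookkeeping is in place, the Hilbert series match and the tableau bijection are formal consequences.
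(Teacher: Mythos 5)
First, a framing remark: the paper does not prove this statement. Theorem~\ref{thm:KMY} is quoted from \cite{KMY} and used as a black box, so there is no internal proof to compare against; your sketch is an attempt to reprove the main results of that reference. The first half of your plan (vertex decomposition of the pipe-dream complex by induction, with the inductive step driven by the transition move of Lemma~\ref{lemma:transitioned}) is indeed the strategy of \cite{KMY}. One correction there: the transition move preserves $\lambda(w)$ and hence $\ell(w)=|\lambda(w)|$ (Lemma~\ref{lemma:transitioned}(ii)), so your induction cannot be on length; the actual induction is on the total rank attached to the essential boxes (how far $w$ is from the dominant permutation of shape $\lambda$), carried out one coordinate at a time via geometric vertex decomposition of the \emph{ideal}, not only of the candidate complex.

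The genuine gap is in your second paragraph. Checking that the antidiagonal term of every essential minor lies in every pipe-dream prime establishes $\langle {\rm init}_{\prec_{\rm antidiag}}(g): g\ \mbox{essential}\rangle\subseteq J$, not $J\subseteq {\rm init}_{\prec_{\rm antidiag}} I_w$; the latter is the hard containment and is essentially equivalent to the Gr\"{o}bner basis statement you are trying to prove. Without it, the squeeze of Lemma~\ref{lemma:KM} (an equidimensional radical ideal \emph{contained in} the target, with equal multidegree) cannot be applied with $H=J$. Nor does ``equal codimension and equal degree'' alone force $J={\rm init}_{\prec_{\rm antidiag}}I_w$: the multidegree ${\mathfrak S}_w$ does not determine the multiset of top-dimensional coordinate subspaces, since the classes $\prod_{(i,j)\in S}(x_i-y_j)$ over subsets $S$ of fixed size are not linearly independent once ${n^2\choose \ell(w)}$ exceeds the dimension of the relevant graded piece, so a priori ${\rm init}_{\prec_{\rm antidiag}}I_w$ could have components other than the $V(\langle z_{ij}:(i,j)\in{\mathcal P}\rangle)$. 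What is actually needed — and what \cite{KMY}, following \cite{Knutson.Miller}, supplies — is control of the full Stanley-Reisner complex of the monomial ideal generated by the antidiagonal lead terms: one must prove that its facets are \emph{exactly} the complements of pipe dreams (the converse of your ``every pipe dream meets every antidiagonal''), after which positivity of multidegrees pins down the initial ideal; alternatively the geometric vertex decomposition induction delivers the Gr\"{o}bner, reducedness and decomposition statements simultaneously with the topology. Finally, your description of interior faces is garbled (they are faces not lying on the boundary sphere, not ``non-faces''), but that is cosmetic next to the missing containment.
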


As is explained in \cite{KMY}, the irreducible components are in manifest bijection with pipe dreams for $w$:
the appearance of a generator $z_{ij}$ indicates the position of $+$'s, using the usual coordinates
consistent with our labeling of the generic matrix $Z$. Compare this with Theorem~\ref{thm:prime} where we
instead express things in terms of the variables of ${\widetilde Z}$.

Although our proof of Theorem~\ref{thm:standard} will use the Gr\"{o}bner basis theorem of \cite{KMY} for
Schubert determinantal ideals (recapitulated in Section~7.1), we remark that
Theorem~\ref{thm:standard} actually provides a generalization. This is based on the fact that
any Schubert determinantal ideal can be realized as a Kazhdan-Lusztig ideal, and this ideal is homogeneous with
respect to the standard grading; see \cite[Section~2.3]{WYIII}.

\subsection{Flagged Schur polynomials; proof of Theorem~\ref{thm:detrule}}
The weight generating series for semistandard tableaux with row entries {\bf flagged} (bounded) by a vector ${\bf b}$ is
called the {\bf flagged Schur polynomial}. An application of a standard Gessel-Viennot type argument
establishes that
\begin{equation}
\label{eqn:GesselViennot}
\det(h_{\lambda_i-i+j}(x_1,\ldots x_{b_i}))=\sum_{T\in \mathcal{T}(\lambda,\bf b)}{\bf x}^{{\rm wt}(T)},
\end{equation}
where $h_k(x_1,\ldots,x_m)$ is the complete homogeneous symmetric function on the variables
$x_1,\ldots,x_m$ and the right-hand side of the equality is by definition the flagged Schur polynomial,
where the sum runs over all semistandard tableau of shape $\lambda$ and flagged by ${\bf b}$.
See \cite[Cor~2.6.3]{Manivel}.

We are now ready to give our proofs of the determinantal expression for multiplicity:

\medskip
\noindent
\emph{Proof of Theorem~\ref{thm:detrule}:}
This is immediate from Theorem~\ref{thm:flaggedrule} combined with (\ref{eqn:GesselViennot}) evaluated
at $x_i=1$ for all $i$ and the fact $h_{\lambda_i -i +j}(1,1,\ldots,1)=
{b_i+\lambda_i-i+j-1\choose \lambda_i-i+j}$. \qed

Now suppose $X=(x_i)_{i\in I}$ and $Y=(y_j)_{j\in J}$ are two finite
families of indeterminates. Define polynomials $h_k(X-Y)$ by the power series expansion
\[\sum_{k\in {\mathbb Z}}u^kh_k(X-Y)=\prod_{j\in J}(1-uy_j)/\prod_{i\in I}(1-ux_i).\]
In the literature one finds the nomenclature {\bf flagged double Schur function}, which is defined by the
following determinantal expression  \cite[Definition
4.1]{ChenLiLouck},
\begin{equation}\label{eq:determinantal for flagged double Schur function}s_{\lambda,{\bf b}}(X-Y)=\det\big{(}h_{\lambda_i - i +j}(X_{b_i}-Y_{\lambda_i+b_i-i})\big{)}_{1\leq i,j\leq \ell},\end{equation}
where
\[X_{b_i}=(x_1,x_2,\dots,x_{b_i}), Y_{\lambda_i+b_i-i}=(y_1,\dots,y_{\lambda_i+b_i-i}).\]
There is also a tableau expression
\begin{equation}\label{eq:s_lambda,b}s_{\lambda,{\bf b}}(X-Y)=\sum_{T\in \mathcal{T}(\lambda,\bf b)}\prod_{\alpha\in T}\big{(}x_{T(\alpha)}-y_{T(\alpha)+C(\alpha)}\big{)}.
\end{equation}
where $C(\alpha)=c-r$ if $\alpha$ is in the $r$-th row and $c$-th
column.

Notice that by comparing the above formula with \cite[Theorem 5.8]{KMY},  the single (respectively, double) Schubert
polynomial ${\mathfrak S}_{w_0w}(X,Y)$ for a covexillary $w$ is the same as the single (respectively double) flagged Schur
polynomial of shape $\lambda(w)$ with flagging ${\bf b}(w)$.

\subsection{Hilbert series and an identity of flagged Schur polynomials}
We now use standard notions from combinatorial commutative algebra, found
in the textbook \cite{Miller.Sturmfels}.

Consider a polynomial ring $S=\mathbb{C}[z_1,\ldots,z_m]$
with a grading such
that $z_i$ has some degree $\mathbf{a}_i\in\mathbb{Z}^N$.  A finitely graded $S$-module
$M=\bigoplus_{{\bf v}\in {\mathbb Z}^N} M_{{\bf v}}$
has a free resolution
\[E_{\bullet}:0\leftarrow E_1\leftarrow E_2\leftarrow\cdots\leftarrow E_L\leftarrow 0\]
where
$E_i=\bigoplus_{i=1}^{\beta_j}S(-\mathbf{d}_{ij})$
is graded with the $j$-th summand of $E_i$ generated in degree
$\mathbf{d}_{ij}\in {\mathbb Z}^N$.

Then the (${\mathbb Z}^N$-graded) {\bf K-polynomial} of $M$ is
\[{\mathcal K}(M,{\bf t})=\sum_j(-1)^j\sum_i {\bf t}^{{\bf d}_{ij}}.\]
In any case where $S$ is {\bf positively graded},
meaning that the $\mathbf{a}_i$
generate a pointed cone in $\mathbb{Z}^N$, ${\mathcal K}(M,{\bf t})$ is the
numerator of the ${\mathbb Z}^N$-graded Hilbert series:
\[{\rm Hilb}(M,{\bf t})=\frac{{\mathcal K}(M,{\bf t})}
{\prod_{i}(1-\mathbf{t}^{\mathbf{a}_i})}.\]
The {\bf multidegree} ${\mathcal C}(M, {\bf t})$
is by definition the sum of the lowest degree terms of
${\mathcal K}(M,{\bf 1-t})$.  (This means we substitute $1-t_k$ for $t_k$ for
all $k$, $1<k<N$.) Also, if $X={\rm Spec}(S/I)$ then let
${\mathcal C}(X, {\bf t}):={\mathcal C}(S/I,{\bf t})$

\begin{proposition}
\label{prop:specialization}
Let $w$ be covexillary, $\lambda=\lambda(w)$ and ${\bf b}={\bf b}(w)$. Set
\[X=(t_{v(1)},\dots,t_{v(n)}), \  Y=(t_n,\dots,t_1)\]
and $s_{\lambda,{\bf b}}(X-Y)$ be the associated flagged Schur function. Then the following equality holds:
\begin{equation}
\label{eqn:tripleequalityG}
{\mathcal C}({\mathcal N}_{v,w},t_{ij}\mapsto t_{v(j)}-t_{n-i+1})=(-1)^{|\lambda|}s_{\lambda,{\bf b}}(Y-X).
\end{equation}
\end{proposition}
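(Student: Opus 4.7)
The plan is to combine two ingredients: the multigraded Hilbert series formula for Kazhdan--Lusztig ideals established in \cite{WYIII}, and the covexillary identification of a double Schubert polynomial with a flagged double Schur polynomial recalled at the end of Section~6.3 (from \cite[Theorem~5.8]{KMY}).

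First, I would invoke the result of \cite{WYIII} expressing the $T$-equivariant $K$-polynomial of ${\mathcal N}_{v,w}$ (with respect to the usual action grading, $\mathrm{wt}(z_{ij}) = t_{v(j)} - t_{n-i+1}$) as a specialization of the double Grothendieck polynomial $\mathfrak{G}_{w_0 w}$ in variables indexed by the permutation $v$. Then I would pass from the $K$-polynomial to the multidegree in the standard way: substitute $t_k \mapsto 1 - t_k$ and extract the lowest (standard) degree component. This should yield an expression for ${\mathcal C}({\mathcal N}_{v,w}, t_{ij} \mapsto t_{v(j)} - t_{n-i+1})$ as a (possibly signed) specialization of the double Schubert polynomial $\mathfrak{S}_{w_0 w}(X, Y)$, with $X, Y$ as in the statement.

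Second, since $w$ is covexillary, $w_0 w$ is vexillary, so the identity
\[\mathfrak{S}_{w_0 w}(X, Y) = s_{\lambda(w), {\bf b}(w)}(X - Y)\]
recalled at the end of Section~6.3 substitutes one side for the other. Combining this with the first step and performing the appropriate argument swap yields the claimed identity.

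The hard part will be the careful bookkeeping of signs and argument orders. In this paper's convention $|\lambda(w)| = |D(w)| = \ell(w_0 w)$ agrees with the codimension of ${\mathcal N}_{v,w}$ in $\Omega_v^\circ$, which equals the total degree of the multidegree, so the degrees on both sides of (\ref{eqn:tripleequalityG}) match. The sign $(-1)^{|\lambda|}$ arises from passing from the $K$-polynomial to the multidegree (the substitution $t_k \mapsto 1 - t_k$ and the extraction of lowest-degree terms together introduce one sign per linear factor of $\mathfrak{S}_{w_0 w}$, hence $(-1)^{|\lambda|}$), while the argument swap $(X, Y) \leftrightarrow (Y, X)$ reflects the matching of the torus-weight convention $\mathrm{wt}(z_{ij}) = t_{v(j)} - t_{n-i+1}$ with the Schubert polynomial conventions of \cite{WYIII} and \cite{KMY}. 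Verifying that these two effects combine \emph{exactly} to produce $(-1)^{|\lambda|} s_{\lambda, {\bf b}}(Y - X)$ (rather than, say, $s_{\lambda, {\bf b}}(X - Y)$ with a different sign) is the delicate step, requiring a direct comparison with the explicit substitution appearing in the Hilbert series formula of \cite{WYIII}.
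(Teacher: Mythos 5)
Your first step is fine and is exactly what the paper does: \cite[Theorem~4.5]{WYIII} is invoked to get ${\mathcal C}({\mathcal N}_{v,w},t_{ij}\mapsto t_{v(j)}-t_{n-i+1})={\mathfrak S}_{w_0w}(X,Y)$. But the part you defer as ``delicate bookkeeping'' is precisely where the content of the proof lies, and the mechanism you sketch for it would fail on both counts. First, the sign $(-1)^{|\lambda|}$ does \emph{not} come from passing from the $K$-polynomial to the multidegree: in the convention of Section~7.3 the substitution ${\bf t}\mapsto {\bf 1}-{\bf t}$ is already built into the definition of ${\mathcal C}$, and the cited theorem delivers the multidegree as ${\mathfrak S}_{w_0w}(X,Y)$ with no sign at all. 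Second, the ``argument swap'' you want is not a formal identity: by the tableau formula (\ref{eq:s_lambda,b}), $s_{\lambda,{\bf b}}(X-Y)=\sum_T\prod_{\alpha}\bigl(x_{T(\alpha)}-y_{T(\alpha)+C(\alpha)}\bigr)$ while $(-1)^{|\lambda|}s_{\lambda,{\bf b}}(Y-X)=\sum_T\prod_{\alpha}\bigl(x_{T(\alpha)+C(\alpha)}-y_{T(\alpha)}\bigr)$, and these are different polynomials in independent alphabets $X,Y$ (interchanging the two alphabets in a double Schur function conjugates the shape and changes the flag, it does not reproduce the same $s_{\lambda,{\bf b}}$ up to sign). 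So if you read the Section~6.3 remark literally as ${\mathfrak S}_{w_0w}(X,Y)=s_{\lambda,{\bf b}}(X-Y)$, no amount of global sign-chasing will convert that into the claimed right-hand side; the precise identity you need is ${\mathfrak S}_{w_0w}(X,Y)=(-1)^{|\lambda|}s_{\lambda,{\bf b}}(Y-X)$, and proving it is the whole point.

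The paper closes this gap by working with explicit monomial expansions rather than with the loosely stated identification: it expands ${\mathfrak S}_{w_0w}(X,Y)=\sum_{\mathcal P}\prod_{(i,j)\in{\mathcal P}}(x_j-y_i)$ over pipe dreams ${\mathcal P}$ for $w$ (from \cite{KMY}), factors $-1$ out of each of the $|\lambda|$ crosses, and then uses the bijection of Section~5.2 between pipe dreams and flagged semistandard tableaux (a $+$ in row $i$ and column $j$ corresponds to a box $\alpha$ with $T(\alpha)=i$ and $j=T(\alpha)+C(\alpha)$) to recognize the result as the tableau formula (\ref{eq:s_lambda,b}) for $s_{\lambda,{\bf b}}(Y-X)$. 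To repair your argument you should replace the swap step with this expansion (or with a citation of the covexillary formula from \cite{KMY} precise enough to fix the argument order and the per-factor sign); as written, the concluding step is a genuine gap.
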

\begin{proof}
$$\aligned & \mathcal{C}(\mathcal{N}_{v,w},t_{ij}\mapsto
t_{v(j)}-t_{n-i+1})\\
&={\mathfrak S}_{w_0w}(X,Y) \quad \textrm{(by \cite[Theorem 4.5]{WYIII})}\\
&=\sum_{\mathcal P}\prod_{(i,j)\in {\mathcal P}}(x_j-y_i) \quad \textrm{(summing over pipe dreams ${\mathcal P}$ for $w$, by \cite{KMY})}\\
&=(-1)^{|\lambda|}\sum_{\mathcal P}\prod_{(i,j)\in {\mathcal P}}(y_i-x_j)  \quad \textrm{(summing over the same  ${\mathcal P}$'s as above)}\\
&=(-1)^{|\lambda|}\sum_{T\in \mathcal{T}(\lambda,\bf b)}\prod_{\alpha\in T}\big{(}y_{T(\alpha)}-x_{T(\alpha)+C(\alpha)}\big{)}
\quad \textrm{(under the correspondence of Section~5.2)}\\
&=(-1)^{|\lambda|}s_{\lambda,{\bf b}}(Y-X) \quad \textrm{(by the tableau formula (\ref{eq:s_lambda,b}))}.\endaligned$$
\end{proof}

\subsection{Conclusion of the proofs}
Let
\[I_{v,w}=\langle g_1,g_2,\ldots,g_N\rangle\]
where $\{g_1,\ldots,g_N\}$ are the essential determinants.
Also, let
\[J_{v,w}=\langle {\rm init}_{\prec_{v,w,\pi}} g_1, {\rm init}_{\prec_{v,w,\pi}} g_2, \ldots, {\rm init}_{\prec_{v,w,\pi}} g_N\rangle.\]
It is always true that
\[J_{v,w}\subseteq {\rm init}_{\prec_{v,w,\pi}}\ I_{v,w}.\]
Equality holds if and only if $\{g_1,\ldots,g_N\}$ is a Gr\"{o}bner basis with respect to $\prec_{v,w,\pi}$.
Let ${\widetilde Z}$ be the shuffled generic matrix determined by $(v\leq w)$,
as defined in Section~4.2.
Let
\[{\widetilde I}_{\Theta_{v,w}}\subseteq {\mathbb C}[{\widetilde Z}]\cong {\mathbb C}[{\bf z}]\]
be the Schubert determinantal ideal as defined by taking sub-determinants of the shuffled matrix ${\widetilde Z}$, as determined by the rank matrix for $\Theta_{v,w}$.
Let $\prec_{{\widetilde{\rm antidiag}}}$ denote a term order that picks off the (southwest to northeast) antidiagonal term
of any sub-determinant of ${\widetilde Z}$. Using Theorem~\ref{thm:KMY} we immediately
conclude that under $\prec_{\widetilde{{\rm antidiag}}}$,
the essential (or defining) minors of ${\widetilde Z}$ (coming from the rank conditions for $\Theta_{v,w}$)
are a Gr\"{o}bner basis for ${\widetilde I}_{\Theta_{v,w}}$ and hence the lead terms generate ${\rm init}_{\prec_{{\widetilde{\rm antidiag}}}} {\widetilde I}_{\Theta_{v,w}}$.

\begin{lemma}
\label{lemma:thesame} The aforementioned generators of ${\rm
init}_{\prec_{{\widetilde{\rm antidiag}}}} {\widetilde
I}_{\Theta_{v,w}}$ are a subset of the generators of $J_{v,w}$.
\end{lemma}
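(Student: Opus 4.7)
The plan is to construct, for each antidiagonal generator of $\mathrm{init}_{\prec_{\widetilde{\mathrm{antidiag}}}}\widetilde{I}_{\Theta_{v,w}}$, a specific essential KL minor of $I_{v,w}$ whose $\prec_{v,w,\pi}$-initial term recovers it. First, using Definition--Lemma~\ref{deflemma:Theta} together with Lemma~\ref{lemma:bfact1} applied to $\Theta_{v,w}$, I identify $\Ess(\Theta_{v,w})$ in shuffled coordinates: the essential box corresponding to $\mathfrak{e}_i=(b_i,\lambda_i-i+b_i)\in\Ess(w)$ sits at $(b_i',\lambda_i-i+b_i')$ of $\widetilde{Z}$, with rank $f_i:=r^w_{\mathfrak{e}_i}-r^v_{\mathfrak{e}_i}$. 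By Lemma~\ref{lemma:simple}, $\{\rho(1),\ldots,\rho(b_i')\}=R_i$ and $\{\chi(1),\ldots,\chi(\lambda_i-i+b_i')\}=C_i$, i.e., the ``free'' (non-$v$-specialized) rows and columns of the southwest $b_i\times(\lambda_i-i+b_i)$ block of $Z^{(v)}$.

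Any generator of $\mathrm{init}_{\prec_{\widetilde{\mathrm{antidiag}}}}\widetilde{I}_{\Theta_{v,w}}$ then has the form $M=\widetilde{z}_{r_1,c_1}\cdots\widetilde{z}_{r_{f_i+1},c_{f_i+1}}$, the southwest-to-northeast antidiagonal of an $(f_i+1)$-minor of $\widetilde{Z}$ at this essential box, with $r_1<\cdots<r_{f_i+1}\le b_i'$ and $c_1<\cdots<c_{f_i+1}\le\lambda_i-i+b_i'$. I associate to $M$ the $(r^w_{\mathfrak{e}_i}+1)$-minor of $Z^{(v)}_{\mathfrak{e}_i}$ on rows $R'=\{\rho(r_a)\}_a\sqcup\{\alpha^{(i)}_j\}_j$ and columns $C'=\{\chi(c_a)\}_a\sqcup\{\beta^{(i)}_j\}_j$; both sets have size $(f_i+1)+k_i=r^w_{\mathfrak{e}_i}+1$ and sit inside the southwest $b_i\times(\lambda_i-i+b_i)$ block. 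A distinguished term in its Leibniz expansion pairs each $\alpha^{(i)}_j$-row with the $\beta^{(i)}_j$-column (contributing the specialized $1$) and each $\rho(r_a)$-row with the $\chi(c_a)$-column (contributing $z_{\rho(r_a),\chi(c_a)}=\widetilde{z}_{r_a,c_a}$), producing precisely the monomial $M$.

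I next argue that this distinguished term is the $\prec_{v,w,\pi}$-initial term of the KL minor. Because the minor is $\deg$-homogeneous (Section~2.4), Theorem~\ref{thm:basic}(I) tells us that $\prec_{v,w,\pi}$ acts on its Leibniz terms as the $\pi$-lex comparison. Passing to shuffled coordinates realizes the KL minor as a minor of $\widetilde{Z}^{(v)}$, and the ordering $\pi$---defined by reading $\widetilde{Z}$ left-to-right and bottom-to-top---is precisely a term order that, on monomials in the generic shuffled variables, selects the southwest-to-northeast antidiagonal of any minor, in complete analogy with the role of $\prec_{\widetilde{\mathrm{antidiag}}}$ in Theorem~\ref{thm:KMY}.

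The main obstacle is to verify that after the $v$-specialization, the $\pi$-maximal \emph{nonzero} Leibniz term of the full $(r^w_{\mathfrak{e}_i}+1)$-minor really does absorb the $k_i$ specialized ones at $\{(\alpha^{(i)}_j,\beta^{(i)}_j)\}$ and emits the remaining $f_i+1$ free variables in exactly the positions $\{(r_a,c_a)\}$, yielding $M$. The subtlety is that the naive SW-to-NE antidiagonal of the shuffled minor may pass through zero entries of $\widetilde{Z}^{(v)}$ and must instead be routed through the possibly inverted pattern of ones; this routing is forced by the filtrations $R_0\subseteq R_1\subseteq\cdots$ and $C_{\ell+1}\subseteq C_\ell\subseteq\cdots$ together with the dominance relations for the $v$-ones, which together ensure that in shuffled coordinates the ones occupy a staircase block northeast of the prescribed free block and that the distinguished term is the unique $\pi$-lex maximum among nonzero Leibniz terms.
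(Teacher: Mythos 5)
Your construction is, in substance, the paper's own argument run in the direction that the lemma actually needs: the paper starts from the essential KL minor at $\mathfrak{e}_i\in\Ess(w)$ that uses every row and column containing a $1$ of $Z^{(v)}_{\mathfrak{e}_i}$, shows its $\prec_{v,w,\pi}$-initial term is the antidiagonal of the complementary minor of ${\widetilde Z}$, and dismisses the converse as ``similar''; your minors on $R'=\{\rho(r_a)\}\sqcup\{\alpha^{(i)}_j\}$, $C'=\{\chi(c_a)\}\sqcup\{\beta^{(i)}_j\}$ are exactly those minors, and your bookkeeping via Lemma~\ref{lemma:simple}, Lemma~\ref{lemma:bfact1} and Definition--Lemma~\ref{deflemma:Theta} (placing the essential box of $\Theta_{v,w}$ at $(b_i',\lambda_i-i+b_i')$ with rank $f_i$) agrees with the paper's.

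The only place your write-up drifts is the final verification, which you leave as an assertion and attach to a slightly wrong mechanism. Theorem~\ref{thm:basic}(I) does not say that $\prec_{v,w,\pi}$ reduces to pure $\pi$-lex on the Leibniz terms; it says that on a polynomial homogeneous for the usual action grading it agrees with $\prec'_{v,w,\pi}$, which \emph{first} minimizes the standard degree and only then breaks ties $\pi$-lexicographically. Using the degree step first makes the ``main obstacle'' of your last paragraph disappear: the chosen minor contains exactly the $k_i$ specialized $1$'s, so every lowest-degree nonzero term uses all of them, and the lowest-degree component is therefore (up to sign) precisely the $(f_i+1)$-minor of ${\widetilde Z}$ on rows $\{r_a\}$ and columns $\{c_a\}$; by the definition of $R_i$ and $C_i$ every entry of that block is an honest variable, so no antidiagonal ever meets a zero and no ``routing through the pattern of ones'' is required. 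The $\pi$-lex tie-break on that generic block then selects its southwest--northeast term, which is your $M$. Your alternative plan---showing $M$ is the $\pi$-lex maximum among \emph{all} nonzero Leibniz terms---can in fact be completed (the free rows and columns occupy the initial segments of the shuffle, the $1$'s sit strictly northeast of the free block, and any non-identity matching of $1$-rows to $1$-columns meets a zero), but as written it is not proved, and it is not literally the comparison the term order performs; the degree-first route is both what the paper does and what closes the argument cleanly.
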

\begin{proof}
Consider an essential determinant $g$ of $I_{v,w}$, which is associated to an $r\times r$ minor $M$
of the submatrix $Z_{{\mathfrak e}}^{(v)}$ of $Z^{(v)}$
associated to ${\mathfrak e}\in \Ess(w)$; here $r=r_{\mathfrak e}^w$.
There are $r_{\mathfrak e}^v\leq r$ many $1$'s in $Z_{{\mathfrak e}}^{(v)}$,
by Proposition~\ref{prop:bruhat}. Assume that the minor $g$ uses all the rows and columns that these
$1$'s sit in. Note that since $g$ is homogeneous with respect to the usual action grading, so by Theorem~\ref{thm:basic}(I),
$\prec_{v,w,\pi}$ will choose the terms of lowest total degree
first, and so it will pick out all terms of the determinant that use all these $1$'s in their product.
Thus, by the definition of $\prec_{v,w,\pi}$ given in Section~4.2, the lead term will exactly be the antidiagonal term of the
minor of a submatrix of ${\widetilde Z}$, which, when the rows and columns are permuted, is precisely
the submatrix $M^{\circ}$ of $M$ that comes from striking out all the rows and columns of $M$
having $1$'s in them. Thus this lead term is a generator of ${\rm init}_{\prec_{{\widetilde{\rm antidiag}}}} {\widetilde I}_{\Theta_{v,w}}$. This generator corresponds to ${\mathfrak e}'\in \Ess(\Theta_{v,w})$, as defined in Definition-Lemma~\ref{deflemma:Theta}.
On the other hand, one can similarly see that all generators of ${\rm init}_{\prec_{{\widetilde{\rm antidiag}}}} {\widetilde I}_{\Theta_{v,w}}$
can be realized in this manner.
\end{proof}

Consider the natural projection
\[\varphi: \mathbb{C}[{\bf z}]\to \mathbb{C}[{\bf z}^{(v)}]\]
that sends all variables not in ${\bf z}^{(v)}$ to $0$. By Lemma~\ref{lemma:thesame}, all generators of
 ${\rm init}_{\prec_{{\widetilde{\rm antidiag}}}} {\widetilde I}_{\Theta_{v,w}}$ only use variables in ${\mathbb C}[{\bf z}^{(v)}]$
so it makes sense to define the ideal
\[H_{v,w}=\varphi\left(({\rm init}_{\prec_{\rm {\widetilde{antidiag}}}} {\widetilde I}_{\Theta_{v,w}})\mathbb{C}[{\bf z}^{(v)}]\right)\subseteq \mathbb{C}[{\bf z}^{(v)}].\]

%
%
%

\begin{lemma}
\label{lemma:equidandreduced}
${\rm Spec}({\mathbb C}[{\bf z}]/H_{v,w})$ defines an equidimensional and reduced scheme.
\end{lemma}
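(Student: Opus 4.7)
The plan is to reduce to Theorem~\ref{thm:KMY} applied to the covexillary permutation $\Theta_{v,w}$, and then observe that $H_{v,w}$ differs from ${\rm init}_{\prec_{{\widetilde{\rm antidiag}}}} {\widetilde I}_{\Theta_{v,w}}$ only by a polynomial extension in the variables that do not appear in any of its generators.

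First I would invoke Theorem~\ref{thm:KMY} for $\Theta_{v,w}$ (which is covexillary by Definition--Lemma~\ref{deflemma:Theta}): under the relabeling $\widetilde{z}_{ij}=z_{\rho(i),\chi(j)}$, the order $\prec_{{\widetilde{\rm antidiag}}}$ is a bona fide antidiagonal term order on $\mathbb{C}[{\widetilde Z}]\cong \mathbb{C}[{\bf z}]$. Hence ${\rm init}_{\prec_{{\widetilde{\rm antidiag}}}}{\widetilde I}_{\Theta_{v,w}}$ is a squarefree monomial (Stanley--Reisner) ideal whose associated complex is a vertex-decomposable ball or sphere; in particular $\mathbb{C}[{\bf z}]\big/{\rm init}_{\prec_{{\widetilde{\rm antidiag}}}}{\widetilde I}_{\Theta_{v,w}}$ is reduced and equidimensional.

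Next I would exploit Lemma~\ref{lemma:thesame}: each generator of this initial ideal is a monomial in the variables of $\mathbb{C}[{\bf z}^{(v)}]$ only. Writing ${\bf u}$ for the variables in ${\bf z}\setminus {\bf z}^{(v)}$, this yields a canonical ring isomorphism
\[
\mathbb{C}[{\bf z}]\big/{\rm init}_{\prec_{{\widetilde{\rm antidiag}}}}{\widetilde I}_{\Theta_{v,w}}\;\cong\;\bigl(\mathbb{C}[{\bf z}^{(v)}]/H_{v,w}\bigr)\otimes_{\mathbb{C}}\mathbb{C}[{\bf u}].
\]

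Finally I would invoke the elementary fact that both reducedness and equidimensionality descend from a free polynomial extension $R\otimes_{\mathbb{C}}\mathbb{C}[{\bf u}]$ to $R$: reducedness descends because $R\hookrightarrow R\otimes_{\mathbb{C}}\mathbb{C}[{\bf u}]$ is faithfully flat, and equidimensionality descends because the irreducible components of ${\rm Spec}(R)\times\mathbb{A}^{|{\bf u}|}$ are exactly the products of those of ${\rm Spec}(R)$ with $\mathbb{A}^{|{\bf u}|}$, so they share a common dimension if and only if those of ${\rm Spec}(R)$ do. Applied to $R=\mathbb{C}[{\bf z}^{(v)}]/H_{v,w}$, this gives the desired conclusion. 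I do not anticipate any real obstacle: the entire content of the lemma is carried by Theorem~\ref{thm:KMY} together with Lemma~\ref{lemma:thesame}, and the passage to $H_{v,w}$ is a clean bookkeeping step.
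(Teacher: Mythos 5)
Your proposal is correct and follows essentially the same route as the paper: the paper likewise invokes Theorem~\ref{thm:KMY} for the covexillary permutation $\Theta_{v,w}$ and then notes that passing from ${\rm init}_{\prec_{{\widetilde{\rm antidiag}}}} {\widetilde I}_{\Theta_{v,w}}$ to $H_{v,w}$ merely divides out an irrelevant factor of affine space, which preserves reducedness and equidimensionality. Your tensor-product/faithful-flatness formulation just makes that last step explicit.
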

\begin{proof} As we have said,
Theorem~\ref{thm:KMY} implies ${\rm init}_{\prec_{{\widetilde{\rm antidiag}}}}{\widetilde I}_{\Theta_{v,w}}$
defines an equidimensional and reduced scheme. Since
dividing out an irrelevant factor of affine space does not affect these
properties, the claim holds.
\end{proof}

Often, when proving equality of two homogeneous ideals $A\subseteq B$ in a positively graded ring $R$,
one expects to show that the multigraded Hilbert series of $R/A$ and $R/B$ are equal. Fortunately, our
arguments will only require equality of multidegrees, thanks to the following:

\begin{lemma}[Lemma~1.7.5 of \cite{Knutson.Miller}]
\label{lemma:KM}
Let $I'\subseteq \Bbbk[z_1,\ldots,z_m]$ be an ideal homogeneous for a positive ${\mathbb Z}^d$-grading. Suppose
$H$ is an equidimensional radical ideal contained inside $I'$. If the zero schemes of $I'$ and $H$ have equal
multidegrees, then $I'=H$.
\end{lemma}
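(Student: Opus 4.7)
The plan is to establish $I'=H$ by studying the short exact sequence of $\mathbb{Z}^d$-graded $R$-modules
\[0 \to I'/H \to R/H \to R/I' \to 0\]
(with $R=\Bbbk[z_1,\ldots,z_m]$) and showing that the submodule $I'/H$ must vanish. The hypotheses on $H$ are there to control associated primes: since $H$ is radical and equidimensional, $R/H$ is reduced and equidimensional, so $\mathrm{Ass}(R/H)=\mathrm{Min}(H)=\{P_1,\ldots,P_t\}$ with all $P_i$ of a common dimension $d=\dim V(H)$. Because $I'/H$ is a submodule of $R/H$, one has $\mathrm{Ass}(I'/H)\subseteq \mathrm{Ass}(R/H)$; in particular, if $I'/H\ne 0$, then its support is a union of some of the $V(P_i)$, hence \emph{pure} of dimension $d$.

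Next I would eliminate the possibility $\dim V(I')<d$. Under a positive $\mathbb{Z}^d$-grading, the multidegree ${\mathcal C}(M,t)$ --- the lowest degree part of ${\mathcal K}(M,1-t)$ --- is a homogeneous polynomial whose degree equals $\mathrm{codim}\,\mathrm{Supp}(M)$, and positivity of multidegrees (\cite{Miller.Sturmfels}) says ${\mathcal C}(M,t)\ne 0$ whenever $M\ne 0$. Hence the hypothesized equality ${\mathcal C}(R/I',t)={\mathcal C}(R/H,t)$ is an equality of nonzero homogeneous polynomials, forcing the underlying codimensions to match: $\dim V(I')=d$.

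With all three modules in the short exact sequence of top-dimension $d$ (or zero, in the case of $I'/H$), additivity of multidegrees on top-dimensional primes yields
\[{\mathcal C}(R/H,t) = {\mathcal C}(I'/H,t) + {\mathcal C}(R/I',t),\]
and substitution of the hypothesis gives ${\mathcal C}(I'/H,t)=0$. Suppose for contradiction that $I'/H\ne 0$; expand
\[{\mathcal C}(I'/H,t) = \sum_{P_i\in\mathrm{Ass}(I'/H)} \mathrm{length}\bigl((I'/H)_{P_i}\bigr)\,{\mathcal C}(R/P_i,t),\]
a positive integer combination of polynomials ${\mathcal C}(R/P_i,t)$ that are themselves nonzero with nonnegative coefficients (again by positivity). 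No such combination can be zero, contradiction. Hence $I'/H=0$, i.e.\ $I'=H$.

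The conceptual crux, and the step I expect will require the most care, is the additivity of multidegrees along the short exact sequence: this fails in general, but is legitimate here precisely because the radicality and equidimensionality of $H$ pin $\mathrm{Ass}(I'/H)$ to the top stratum, so that every module in sight has the same top dimension. The final positivity input from \cite{Miller.Sturmfels} then converts vanishing of the multidegree into vanishing of the module, completing the proof.
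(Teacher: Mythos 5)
The paper does not prove this lemma at all: it is quoted verbatim as Lemma~1.7.5 of \cite{Knutson.Miller}, so there is no in-paper argument to compare against. Your proof is correct and is essentially the argument of the cited source: the short exact sequence $0\to I'/H\to R/H\to R/I'\to 0$, the observation that radicality plus equidimensionality of $H$ forces $\mathrm{Ass}(I'/H)\subseteq\mathrm{Min}(H)$ to lie entirely in top dimension, additivity of multidegrees in that common codimension, and positivity to conclude $I'/H=0$. One small phrasing caveat: for an arbitrary positive ${\mathbb Z}^d$-grading (such as the usual action grading used in this paper, whose weights $t_{v(j)}-t_{n-i+1}$ have mixed signs), the polynomials ${\mathcal C}(R/P_i,t)$ need not have nonnegative coefficients in the monomial basis; the correct statement of positivity is that each ${\mathcal C}(R/P_i,t)$ is a nonzero positive combination of products of linear forms $\langle \mathbf{a},t\rangle$ with all $\mathbf{a}$ in the pointed grading cone, so evaluating at a vector $u$ with $\langle\mathbf{a},u\rangle>0$ for every variable degree $\mathbf{a}$ shows that a positive integer combination of them cannot vanish --- which is exactly what your final contradiction needs.
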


We will apply Lemma~\ref{lemma:KM} in the case $H=H_{v,w}\subseteq I'={\rm init}_{\prec_{v,w,\pi}}I_{v,w}$.

\begin{proposition}
\label{proposition:multidegreesagree} The multidegree of ${\mathcal
N}_{v,w}$ equals the multidegree of ${\mathbb C}[{\bf z}]/({\rm
init}_{\prec_{{\widetilde{\rm antidiag}}}} {\widetilde I}_{\Theta_{v,w}})$, each
with respect to the usual action of $T\subset GL_n$ (as defined in
Section~2.4).
\end{proposition}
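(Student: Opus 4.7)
The strategy is to compute each multidegree as a signed specialization of a flagged double Schur polynomial and then reconcile the two expressions via the shuffling $(\rho,\chi)$. The left-hand side is already handled by Proposition~\ref{prop:specialization}, which gives
\[\mathcal{C}(\mathcal{N}_{v,w},\, t_{ij}\mapsto t_{v(j)}-t_{n-i+1}) \;=\; (-1)^{|\lambda|}\, s_{\lambda,\mathbf{b}(w)}(Y-X),\]
where $\lambda=\lambda(w)$, $X=(t_{v(1)},\ldots,t_{v(n)})$ and $Y=(t_n,\ldots,t_1)$.

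For the right-hand side, I would first invoke the invariance of multidegree under Gr\"obner degeneration in a positively graded setting (see \cite{Miller.Sturmfels}), which replaces the initial ideal by $\widetilde{I}_{\Theta_{v,w}}$ itself. Because $\widetilde{I}_{\Theta_{v,w}}$ is the Schubert determinantal ideal of the covexillary permutation $\Theta_{v,w}$ inside the shuffled generic matrix $\widetilde{Z}$, the Knutson--Miller formula \cite{Knutson.Miller} together with \cite[Theorem~5.8]{KMY} identifies its multidegree with the flagged double Schur polynomial $(-1)^{|\lambda|}\, s_{\lambda,\mathbf{b}(\Theta_{v,w})}(\widetilde{Y}-\widetilde{X})$. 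Here $\widetilde{x}_j = x_{\chi(j)}$ and $\widetilde{y}_i = y_{\rho(i)}$ account for the shuffling, since $\widetilde{z}_{ij} = z_{\rho(i),\chi(j)}$ carries the $T$-weight $t_{v(\chi(j))} - t_{n+1-\rho(i)}$.

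Combining these two computations, the proposition reduces to the flagged-Schur identity
\[s_{\lambda,\mathbf{b}(w)}(Y-X) \;=\; s_{\lambda,\mathbf{b}(\Theta_{v,w})}(\widetilde{Y}-\widetilde{X}).\]
Expanding each side via the pipe-dream formula as in the proof of Proposition~\ref{prop:specialization}, this becomes the tautology $\sum_{\mathcal{P}}\prod_{(i,j)\in\mathcal{P}}(x_j-y_i)$ indexed by the same set of pipe dreams, once one checks that $(i,j)\mapsto(\rho(i),\chi(j))$ furnishes a bijection between pipe dreams for $\Theta_{v,w}$ in $B(\Theta_{v,w})$ and pipe dreams for $w$ in $B(w)$.

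The main obstacle is verifying this bijection. One must confirm that $(\rho,\chi)$ identifies the regions $B(\Theta_{v,w})$ and $B(w)$, that the initial pipe dream $\lambda\subseteq B(\Theta_{v,w})$ is carried to the initial pipe dream $\lambda\subseteq B(w)$, and that the admissible $2\times 2$ chute moves are preserved under the map. All of these should follow by unwinding the definitions of $R_i$, $C_i$ and $b_i$ from Section~\ref{subsection: The Mora basis theorem}, together with the essential-set propagation described in Definition--Lemma~\ref{deflemma:Theta} and Lemma~\ref{lemma:transitioned}; indeed, these data were engineered precisely so that shuffling the variables converts pipe dreams for $w$ into pipe dreams for $\Theta_{v,w}$ in a weight-preserving way.
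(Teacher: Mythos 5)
Your first two steps match the paper: the left-hand side is Proposition~\ref{prop:specialization}, and the right-hand side is obtained exactly as in the paper's proof, by passing from the initial ideal to $\widetilde{I}_{\Theta_{v,w}}$, invoking \cite{Knutson.Miller}/\cite[Theorem~5.8]{KMY}, and twisting the grading through the embeddings coming from $(\rho,\chi)$, so that the proposition reduces to
$s_{\lambda,\mathbf{b}(w)}(Y-X)=s_{\lambda,\mathbf{b}(\Theta_{v,w})}(Y'-X')$ with $X'=(t_{v(c_1)},\ldots)$, $Y'=(t_{n+1-r_1},\ldots)$. The gap is in your final step. The map $(i,j)\mapsto(\rho(i),\chi(j))$ cannot give a weight-preserving bijection between \emph{all} pipe dreams for $\Theta_{v,w}$ and \emph{all} pipe dreams for $w$: these sets generally have different cardinalities. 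In the running example ($v=5123746$, $w=7531462$) one has $\mathbf{b}(w)=(2,4,6)$ while $\mathbf{b}(\Theta_{v,w})=(1,3,3)$, so the pipe dreams for $w$ are counted by semistandard tableaux of shape $(4,2,1)$ flagged by $(2,4,6)$ (far more than two), whereas $\Theta_{v,w}$ has exactly the two pipe dreams of Example~\ref{exa:somepipes}. So the identity is emphatically not a term-by-term tautology over a common index set.

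What actually happens is that the specialization on the $w$-side sets $x_j-y_i=t_{v(j)}-t_{n+1-i}$, which vanishes precisely when $(i,j)$ is the position of a $1$ of $Z^{(v)}$; every pipe dream for $w$ whose $+$'s meet such a position contributes $0$, and only the survivors should match up (after deleting the rows and columns through those $1$'s, i.e.\ after the shuffle) with pipe dreams for $\Theta_{v,w}$. Proving that this surviving set is governed by the flag $\mathbf{b}(\Theta_{v,w})$ is genuinely nontrivial and is exactly what the paper replaces by an algebraic argument: it introduces the auxiliary flag $\mathbf{b}'=\mathbf{b}(w)-(k_1,\ldots,k_\ell)$ and proves two identities, $s_{\lambda,\mathbf{b}}(Y-X)=s_{\lambda,\mathbf{b}'}(Y'-X')$ by cancelling the common factors $(1-t_{v(\beta^{(i)}_j)}u)$ in the generating functions for the $h_k$'s (using Lemma~\ref{lemma:simple}), and then $s_{\lambda,\mathbf{b}'}(Y'-X')=s_{\lambda,\mathbf{b}''}(Y'-X')$ by showing, via Lemmas~\ref{lemma:bfact1} and~\ref{lemma:bfact2} and a semistandardness argument, that the flags $\mathbf{b}'$ and $\mathbf{b}''=\mathbf{b}(\Theta_{v,w})$ cut out the same set of tableaux even though $\mathbf{b}''\le\mathbf{b}'$ componentwise. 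Your plan could in principle be completed by constructing the corresponding excited-diagram-style bijection between pipe dreams for $\Theta_{v,w}$ and pipe dreams for $w$ avoiding the $1$'s of $v$, but that bijection (not the full-set bijection you assert) is the real content, and it does not follow merely by ``unwinding the definitions'' of $R_i$, $C_i$, $b_i$.
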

\begin{proof} By Proposition \ref{prop:specialization}, the multidegree of
$\mathcal{N}_{v,w}$ is
\begin{equation}\label{eq:multideg N}\mathcal{C}(\mathcal{N}_{v,w},t_{ij}\mapsto
t_{v(j)}-t_{n-i+1})=(-1)^{|\lambda|}s_{\lambda,\bf{b}}(Y-X),\end{equation}
where ${\bf b}={\bf b}(w)$.

On the other hand, in \cite{KMY}, it was proved that, for
$w$ covexillary, the multidegree of the matrix Schubert variety
\[{\overline X}_{w}={\rm Spec}({\mathbb C}[{\bf z}]/I_w)\]
is the flagged Schur polynomial
\[s_{\lambda,{\bf b}}(Y-X) \mbox{\ where $\lambda=\lambda(w)$ and
${\bf b}={\bf b}(w)$.}\]
However, this multidegree is with respect to the $2n$-dimensional torus action where a vector
\[(a_1,\ldots,a_n,a'_1,\ldots,a'_n)\in T\times T\]
acts by rescaling row $i$ (from the top) of a matrix by $a_i^{-1}$
and rescaling column $i$ by $a'_i$. On the other hand there is an
embedding of tori
\begin{equation}
\label{eqn:tori1}
T\hookrightarrow T\times T:(a_1,\dots,a_n) \mapsto
(a_1,\dots,a_n; a_{v(1)},\dots,a_{v(n)})
\end{equation}
that realizes the usual torus action as a subtorus of $T\times T$.
As is explained in
\cite{WYIII}, because of this embedding, one can
compute the multidegree for ${\overline X}_w$ under the usual torus
action by the substitutions
$$\aligned & X=(x_1,\dots,x_n)=(t_{v(1)},\dots,t_{v(n)}),\\
&Y=(y_1,\dots,y_n)=(t_n,\dots,t_1).\endaligned$$
Let $\rho=r_1 \cdots r_n\in
S_n,\ \chi=c_1 \cdots c_n\in S_n$ be defined as in
\S\ref{subsection: The Mora basis theorem}. Set
$$\aligned
&X'=(x_1',\dots,x_n')=(x_{c_1},\dots,x_{c_n})=(t_{v(c_1)},\dots,t_{v(c_n)}),\\
&Y'=(y_1',\dots,y_n')=(y_{r_1},\dots,y_{r_n})=(t_{n+1-r_1},\dots,t_{n+1-r_n}),\\
&\bf{b}''={\bf b}(\Theta_{v,w}).\\
\endaligned$$
There is another embedding of tori
\begin{equation}
\label{eqn:tori2}
T\times T\hookrightarrow T\times T:(a_1,\dots,a_n,a_1',\dots,a_n')\mapsto (a_{r_1},\dots,a_{r_n},a_{c_1}',\dots,a_{c_n}')
\end{equation}
Composing the two tori embeddings (\ref{eqn:tori1}) and (\ref{eqn:tori2}) allows us to twist the usual action grading to one
on ${\rm Fun}[{\widetilde Z}]\cong {\mathbb C}[{\bf z}]$. Putting this together, the multidegree of the matrix Schubert variety
$\mathbb{C}[{\bf z}]/({\rm init}_{\prec_{{\widetilde{\rm
antidiag}}}} {\widetilde I}_{\Theta_{v,w}})$ is
\begin{equation}\label{eq:multideg initI}{\mathcal C} \left(\mathbb{C}[{\bf
z}]/({\rm init}_{\prec_{{\widetilde{\rm antidiag}}}} {\widetilde
I}_{\Theta_{v,w}}),{\bf t}\right)
=(-1)^{|\lambda|}s_{\lambda,\bf{b}''}(Y'-X'),
\end{equation}
with respect to the grading ${\rm deg }
(z_{ij})=t_{v(j)}-t_{n+1-i}$. Here, $\lambda=\lambda(w)=\lambda(\Theta_{v,w})$, see Definition-Lemma~\ref{deflemma:Theta}.

 In order to prove that the two multidegrees (\ref{eq:multideg
N}) and (\ref{eq:multideg initI}) are equal polynomials, we define
an auxiliary flagging
\[{\bf{b}}'=(b'_1,\dots,b'_\ell), \mbox{\ where for
$1\le i\le \ell$, $b'_i=b_i-k_i$}\]
and
\[k_i=\#\{\mbox{$1$'s in $Z^{(v)}$ that are dominated by $(b_i, \lambda_i-i+b_i)$}\},\]
cf.~Lemma~\ref{lemma:simple}. We will instead establish
\begin{equation}\label{eq:b=b'}s_{\lambda,\bf{b}}(Y-X)=s_{\lambda,\bf{b}'}(Y'-X')\end{equation}
 and
\begin{equation}\label{eq:b'=b''}s_{\lambda,\bf{b}'}(Y'-X')=s_{\lambda,\bf{b}''}(Y'-X'),\end{equation}
from which the equality follows.

We now prove (\ref{eq:b=b'}). By (\ref{eq:determinantal for
flagged double Schur function}), it is equivalent to prove
$$\det\big{(}h_{\lambda_i - i +j}(Y_{b_i}-X_{\lambda_i+b_i-i})\big{)}_{1\leq i,j\leq
\ell}= \det\big{(}h_{\lambda_i - i
+j}(Y'_{b'_i}-X'_{\lambda_i+b'_i-i})\big{)}_{1\leq i,j\leq \ell},$$
i.e.,
$$\det\Bigg{(}[u^{\lambda_i-i+j}]\frac{\prod_{x\in
X_{\lambda_i-i+b_i}}(1-xu)}{\prod_{y\in
Y_{b_i}}(1-yu)}\Bigg{)}_{1\le i,j\le \ell}
=\det\Bigg{(}[u^{\lambda_i-i+j}]\frac{\prod_{x\in
X'_{\lambda_i-i+b'_i}}(1-xu)}{\prod_{y\in
Y'_{b'_i}}(1-yu)}\Bigg{)}_{1\le i,j\le \ell}.$$
In fact, more strongly we show that for every $1\le i\le \ell$,
\begin{equation}\label{eq:multideg}\frac{\prod_{x\in X_{\lambda_i-i+b_i}}(1-xu)}{\prod_{y\in
Y_{b_i}}(1-yu)}= \frac{\prod_{x\in
X'_{\lambda_i-i+b'_i}}(1-xu)}{\prod_{y\in Y'_{b'_i}}(1-yu)}.
\end{equation}
The equality (\ref{eq:multideg}) is proved as follows. We use the
notation as in Section \ref{subsection: The Mora basis theorem}.
Recall (\ref{eqn:deletedspots}); we now define
$$A_i=\left\{t_{v(\beta^{(i)}_j)}\right\}_{1\le j\le
k_i}\subseteq\{t_1,\dots,t_n\}.$$ By Lemma \ref{lemma:simple}, we
have the following equalities of subsets of $\{t_1,\dots,t_n\}$:
\begin{equation}\label{eq:Y}
\aligned
Y_{b_i}&=\{y_1,\dots,y_{b_i}\}\\
&=\{t_n,\dots,t_{n+1-b_i}\}\\
&=\{t_{n+1-r_1},\dots,t_{n+1-r_{b'_i}}\}\cup\{t_{n+1-\alpha^{(i)}_1},\dots,t_{n+1-\alpha^{(i)}_{k_i}}\}\\
&=\{t_{n+1-r_1},\dots,t_{n+1-r_{b'_i}}\}\cup\{t_{v(\beta^{(i)}_1)},\dots,t_{v(\beta^{(i)}_{k_i})}\}\\
&=\{y'_1,\dots,y'_{b'_i}\}\cup A_i\\
&=Y'_{b'_i}\cup A_i.\endaligned
\end{equation}
\begin{equation}\label{eq:X}
\aligned
X_{\lambda_i-i+b_i} &=\{x_1,\dots,x_{\lambda_i-i+b_i}\}\\
 &=\{t_{v(1)},\dots,t_{v(\lambda_i-i+b_i)}\}\\
 &=\{t_{v(c_1)},\dots,t_{v(c_{\lambda_i-i+b'_i})}\}\cup\{t_{v(\beta^{(i)}_1)},\dots,t_{v(\beta^{(i)}_{k_i})}\}\\
 &=\{x'_1,\dots,x'_{\lambda_i-i+b'_i}\}\cup A_i\\
 &=X'_{\lambda_i-i+b'_i}\cup A_i,\endaligned
\end{equation}

Because of (\ref{eq:Y}) and (\ref{eq:X}), we can cancel out the
factors
\[\left(1-t_{v(\beta^{(i)}_j)}u\right), \mbox{\ for $1\le j\le k_i$,}\]
which appear both in the numerator and in the denominator on the
left-hand side of (\ref{eq:multideg}). After the cancellation, we
obtain the the right-hand side of (\ref{eq:multideg}). This proves
(\ref{eq:multideg}).

Next, we prove (\ref{eq:b'=b''}) using the tableau
formula for flagged double Schur functions.
By the discussion of
Section~5.2, it suffices to show that the two sets of flagged semistandard Young tableaux
of shape $\lambda$, flagged by ${\bf b'}$
and ${\bf b''}$ respectively, are the same.

Let
\[\{i_1,i_2,\dots,i_m\}=\{i\;|\; 1\le i\le \ell,\; \lambda_i>\lambda_{i+1}\}\]
(assume $\lambda_{\ell+1}=0$ and  $i_1<i_2<\cdots<i_m$). In other
words, this is the set of indices of the rows of the Young diagram of $\lambda$ that
have corners on their right ends.

We claim that $b''_i\le b'_i$, with
equality when
$i=i_1,\dots,i_m$. By Lemma~\ref{lemma:bfact1} we have
\[\Ess(w)=\{(b_i,\lambda_i-i+b_i)\;|\;i=i_1,\dots,i_m\},\]
and
\[\Ess(\Theta_{v,w})=\{(b''_i,\lambda_i-i+b''_i)\;|\;i=i_1,\dots,i_m\}.\]
Therefore for
$i=i_1,\dots,i_m$, we have, from the definition of
${\bf b'}={\bf b}-(k_1,k_2,\ldots, k_\ell)$ and ${\bf b''}={\bf b}(\Theta_{v,w})$ that
\begin{equation}
\label{eqn:vuseful}
b'_i=b_i-(\mbox{the number of $1$'s in $Z^{(v)}$ dominated by $(b_i,\lambda_i-i+b_i)$})=b''_i.
\end{equation}
On the other hand, for
\[i\in\{1,\dots,\ell\}\setminus\{i_1,\dots,i_m\},\]
let $k$ be the
index that
\[0\le k\le m-1 \mbox{\ and  \ } i_k<i<i_{k+1}\]
(declare $i_0=0$).

By Lemma~\ref{lemma:bfact2} we have
\[b_i=\max(b_{i_{k+1}}-i_{k+1}+i,b_{i_k})\]
(define $b_{i_0}=b_0=0$), and
\[b''_i=\max(b''_{i_{k+1}}-i_{k+1}+i,b''_{i_k}),\]
(where $b''_{i_0}=b''_0=0$).

Therefore the claimed inequality
\[b''_i\le b'_i,\]
or
\[b_i-b''_i\ge b_i-b'_i,\]
is equivalent to
\begin{multline}\label{eq:inequality b'b''}
\max(b_{i_{k+1}}-i_{k+1}+i,b_{i_k})-\max(b''_{i_{k+1}}-i_{k+1}+i,b''_{i_k})\ge\\
\mbox{\{number of 1's in $Z^{(v)}$ dominated by $(b_i,\lambda_i-i+b_i)$\}}.
\end{multline}
This can be checked in two cases.

\noindent Case (1): $b''_{i_{k+1}}-i_{k+1}+i\le b''_{i_k}$.

In this
case, the left-hand side of (\ref{eq:inequality b'b''}) is
$$\aligned &b_i-b''_{i_k}=(b_i-b_{i_k})+(b_{i_k}-b''_{i_k})\\
&=(b_i-b_{i_k})+\#\textrm{\{$1$'s in $Z^{(v)}$ dominated by $(b_{i_k},\lambda_{i_k}-i_k+b_{i_k})\}$} \\
&\ge\#\textrm{\{$1$'s in $Z^{(v)}$ dominated by
$(b_i,\lambda_i-i+b_i)$\}}\endaligned$$
where the second equality is by (\ref{eqn:vuseful}) and
the last inequality is
because, in the rows $b_{i_k}+1,\dots,b_i$, there are at
most $(b_i-b_{i_k})$ many $1$'s in $Z^{(v)}$ dominated by
$(b_i,\lambda_i-i+b_i)$, and $\lambda_i -i +b_i\leq \lambda_{i_k}-i_k+b_{i_k}$
since $i_k<i$; see (\ref{eqn:someineq}).

\noindent Case (2): $b''_{i_{k+1}}-i_{k+1}+i> b''_{i_k}$.

In this
case, the left-hand side of (\ref{eq:inequality b'b''}) is
$$\aligned &b_i-(b''_{i_{k+1}}-i_{k+1}+i)=(b_{i_{k+1}}-b''_{i_{k+1}})+(b_i-b_{i_{k+1}}+i_{k+1}-i)\\
&=\#\textrm{\{$1$'s in $Z^{(v)}$ dominated by $(b_{i_{k+1}},\lambda_{i_{k+1}}-i_{k+1}+b_{i_{k+1}})\}$}+(b_i-b_{i_{k+1}}+i_{k+1}-i) \\
&\ge\#\textrm{\{$1$'s in $Z^{(v)}$ dominated by
$(b_i,\lambda_i-i+b_i)$\}}\endaligned$$
where the second equality is by (\ref{eqn:vuseful}) and
the last inequality is
because, in the columns
$(\lambda_{i_{k+1}}-i_{k+1}+b_{i_{k+1}}+1),
(\lambda_{i_{k+1}}-i_{k+1}+b_{i_{k+1}}+2),\dots,(\lambda_i-i+b_i)$,
there are at most
$$\aligned
(\lambda_i-i+b_i)-(\lambda_{i_{k+1}}-i_{k+1}+b_{i_{k+1}}) & =  (b_i-b_{i_{k+1}}+i_{k+1}-i)+(\lambda_i - \lambda_{i_{k+1}})\\
& =  b_i-b_{i_{k+1}}+i_{k+1}-i\endaligned$$
many $1$'s in $Z^{(v)}$ dominated by $(b_i,\lambda_i-i+b_i)$. (We have again applied (\ref{eqn:someineq}).)

Lastly, we show that the flagging ${\bf b}'$ and ${\bf b}''$ give
the same set of Young tableaux. In other words,
we need to show that any semistandard Young tableau
of shape $\lambda$ flagged by ${\bf b}'$ is also flagged by ${\bf b}''$.
Since $b_{i_k}'=b_{i_k}''$ for all $k$, it remains to consider
$i$ that satisfies $i_k<i<i_{k+1}$ for some $0\le k\le m-1$ (again define
$i_0=0$). Since $\lambda_i=\lambda_{i+1}=\cdots=\lambda_{i_{k+1}}$,
the length of rows $i$, $i+1$,$\dots$,$i_{k+1}$ of the Young tableau
are the same. Denote by ${\tt label}(i,j)$ the entry at the $i$-th
row and $j$-th column of the Young tableau. Then by the definition
of semi-standard Young tableaux,
$${\tt label}(i,\lambda_i)< {\tt label}(i+1,\lambda_{i+1})<{\tt label}(i+2,\lambda_{i+2})<\cdots<{\tt label}(i_{k+1},\lambda_{i_{k+1}})\le b''_{i_{k+1}},$$
hence $${\tt label}(i,\lambda_i)\le b''_{i_{k+1}}-(i_{k+1}-i)\le
\max(b''_{i_{k+1}}-(i_{k+1}-i),b''_{i_k})=b''_i.$$
So the condition ${\tt
label}(i,\lambda_i)\le b'_i$ is equivalent to the condition ${\tt
label}(i,\lambda_i)\le \min(b'_i,b''_i)=b''_i$. Therefore $\bf{b}'$
and $\bf{b}''$ give the same set of Young tableaux, as desired.
\end{proof}

\begin{proof}[Proof of Theorem \ref{thm:standard}, Theorem~\ref{thm:prime} and Theorem~\ref{thm:Hilbert}]
We know that
\[H_{v,w}\subseteq J_{v,w}\subseteq {\rm init}_{\prec_{v,w,\pi}} I_{v,w}\]
so  $H_{v,w}$ is an equidimensional, radical ideal contained inside ${\rm init}_{\prec_{v,w,\pi}} I_{v,w}$
by Lemmas \ref{lemma:thesame} and~\ref{lemma:equidandreduced}.
 The Hilbert series and hence multidegrees of
${\mathbb C}[{\bf z}^{(v)}]/{\rm init}_{\prec_{v,w,\pi}} I_{v,w}$ and ${\mathbb C}[{\bf z}^{(v)}]/I_{v,w}$
are equal by Theorem~\ref{thm:basic}(IV). On the other hand, these multidegrees are equal to the
multidegree of ${\mathbb C}[{\bf z}]/H_{v,w}$ by
Proposition~\ref{proposition:multidegreesagree} and the fact that multidegrees are unaffected by crossing the
scheme  by affine space. Hence
\[H_{v,w}=J_{v,w}={\rm init}_{\prec_{v,w,\pi}} I_{v,w}\]
by
Lemma~\ref{lemma:KM}. This proves the Theorem~\ref{thm:standard}.

Moreover, since $H_{v,w}={\rm init}_{\prec_{v,w,\pi}} I_{v,w}$ and since
${\rm Spec}\left({\mathbb C}[{\bf z}^{(v)}]/H_{v,w}\right)$
and  ${\rm init}_{\prec_{\widetilde{\rm antidiag}}} {\overline X}_{\Theta_{v,w}}$ only differ by crossing by affine space it follows the
prime decomposition of ${\rm init}_{\prec_{v,w,\pi}} I_{v,w}$ lifts to a prime decomposition 
of ${\rm init}_{\prec_{\widetilde{\rm antidiag}}} {\widetilde I}_{\Theta_{v,w}}$ and so Theorem~\ref{thm:prime}
follows from the prime decomposition theorem of \cite{KMY} (taking into account the permutation of coordinates).

Finally, Theorem~\ref{thm:Hilbert} follows from Theorem~\ref{thm:basic}(VI), Theorem~\ref{thm:KMY}, and the
discussion above.
\end{proof}

\section{Conjectures and final remarks}
We now present some conjectures that complement Theorem~\ref{thm:basic}. 

\begin{conjecture}
\label{conj:firstconj}
For some $\pi$, ${\rm init}_{\prec_{v,w,\pi}}I_{v,w}$ defines a reduced and
equidimensional scheme, i.e., the hypothesis and hence conclusion of Theorem~\ref{thm:basic} (V) holds.
\end{conjecture}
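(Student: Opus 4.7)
My plan is to extend the strategy that worked for covexillary $w$ (Sections~4--7) to arbitrary $(v,w)$. The template provided by Theorems~\ref{thm:standard} and~\ref{thm:prime} is: construct a shuffling $\pi=\pi(v,w)$ such that, after the $\pi$-relabeling of coordinates and crossing by an affine factor, ${\rm init}_{\prec_{v,w,\pi}} I_{v,w}$ coincides with an antidiagonal initial ideal of a Schubert determinantal ideal for which a Gr\"{o}bner basis theorem yielding a reduced, equidimensional limit is already known. In the covexillary case, that ideal is the matrix Schubert ideal ${\widetilde I}_{\Theta_{v,w}}$ and the antidiagonal Gr\"{o}bner basis theorem used is \cite{KMY}. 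For general $w$, one would replace $\Theta_{v,w}$ by a ``transitioned'' permutation built from $v$ and $w$ and invoke instead the antidiagonal Gr\"{o}bner basis theorem of \cite{Knutson.Miller} for arbitrary Schubert determinantal ideals, whose Stanley--Reisner complex is a subword complex, hence a shellable ball or sphere.

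The concrete steps I foresee are: (i)~Define a candidate target $\Theta_{v,w}$ iteratively, applying some generalization of the transition move (\ref{eqn:transitioned}) at a distinguished essential box $\mathfrak{e}$ with $r^w_{\mathfrak{e}}>r^v_{\mathfrak{e}}$, in order to decrease that rank by one; repeat until $r^{w^{(N)}}_{\mathfrak{e}}=r^v_{\mathfrak{e}}$ at every essential box. For non-covexillary $w$, the northeast-most dot dominated by $\mathfrak{e}$ is not unique, so the transition branches, and one must allow a choice and verify that some branch produces a target with the correct multidegree. (ii)~Build $\pi$ by concatenating the row/column reorderings produced at each transition, so that in every essential minor the terms of lowest total degree---which are exactly those chosen by $\prec_{v,w,\pi}$ by Theorem~\ref{thm:basic}(I) together with homogeneity of essential minors under the usual action---become antidiagonal terms of a submatrix of the $\pi$-shuffled generic matrix ${\widetilde Z}$. (iii)~Replicate the argument of Section~7.4: set $H_{v,w}$ to be the analog of the ideal in Lemma~\ref{lemma:thesame}; the containment $H_{v,w}\subseteq {\rm init}_{\prec_{v,w,\pi}} I_{v,w}$ is automatic from (ii); $H_{v,w}$ is reduced and equidimensional by \cite{Knutson.Miller}; then equality follows from Lemma~\ref{lemma:KM} after one checks that the two schemes have equal multidegree under the usual action.

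I expect the multidegree comparison in step~(iii) to be the main obstacle. For covexillary $w$, Proposition~\ref{proposition:multidegreesagree} identifies both multidegrees with the same flagged Schur polynomial $s_{\lambda,{\bf b}}(Y-X)$, via an explicit cancellation that exchanges the flaggings ${\bf b}(w)$ and ${\bf b}(\Theta_{v,w})$. In the general case, the relevant multidegrees are double Schubert polynomials ${\mathfrak S}_{w_0 w}(X,Y)$ and a suitable twist of ${\mathfrak S}_{w_0 \Theta_{v,w}}$, which admit no such rigid flagged determinantal form; the cancellation would have to be replaced by a Schubert-polynomial identity of Monk or Lascoux--Sch\"{u}tzenberger transition type. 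Establishing such an identity---and reconciling it with the non-canonical branching in~(i)---is the crux of the problem. A pragmatic preliminary step is to test Conjecture~\ref{conj:firstconj} on the smallest non-covexillary example $w=3412$, both to pin down the correct choice of $\pi$ and to suggest what form the required Schubert-polynomial identity must take.
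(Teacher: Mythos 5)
The statement you are addressing is a \emph{conjecture}: the paper does not prove it in general, and offers only the covexillary case (Sections~4--7), the heuristic that the SE--NW shuffling $\pi_{\nwarrow}$ should work based on \cite{Knutson.Miller} and \cite{WYIII}, and exhaustive computation for $n\leq 6$. So what you have written cannot be compared with a proof in the paper; it is a research program, and you candidly flag its main obstacles yourself. That said, it is worth recording why the program \emph{as you have stated it} cannot go through verbatim, for a reason the paper itself documents.

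Your step~(iii) replicates Section~7.4: take $H_{v,w}$ generated by the would-be antidiagonal lead terms of the essential minors, use $H_{v,w}\subseteq J_{v,w}\subseteq {\rm init}_{\prec_{v,w,\pi}} I_{v,w}$, and force equality by Lemma~\ref{lemma:KM} via a multidegree comparison. If that argument succeeded for a given $(v,w,\pi)$, it would in particular prove $J_{v,w}={\rm init}_{\prec_{v,w,\pi}} I_{v,w}$, i.e.\ that the essential minors form a Gr\"{o}bner basis under $\prec_{v,w,\pi}$. But the paper exhibits $w=45231$, $v=23451$ (note $w$ contains the pattern $3412$) for which the defining minors are \emph{never} a Gr\"{o}bner basis for any $\prec_{v,w,\pi}$: the element $-z_{13}z_{21}-z_{12}z_{31}+z_{13}z_{22}z_{31}\in I_{v,w}$ has no initial term lying in $\langle z_{11}\rangle$. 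Hence in general either the multidegree comparison must fail or your $H_{v,w}$ is not reduced/equidimensional with the right multidegree; any extension of the method must start from a larger generating set (or an entirely different combinatorial target) rather than the essential minors. Two further points: the transition move (\ref{eqn:transitioned}) and Lemma~\ref{lemma:transitioned} use covexillarity in an essential way (uniqueness of the northeast-most dominated dot, stability of $\lambda$ and of the essential set), so your branching construction of $\Theta_{v,w}$ has no analogue of Proposition~\ref{prop:ess_unique}-style uniqueness to anchor it; and the choice of $\pi$ is genuinely delicate --- Example~\ref{exa:counter} shows that a generalized antidiagonal shuffling ($\pi_{\nearrow}$ for $w=563412$) can give a nonreduced limit, while the paper's conjectural good choice is $\pi_{\nwarrow}$, so testing only on $w=3412$ would not expose this sensitivity.
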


Specifically, consider the {\bf SE-NW shuffling} $\pi_{\nwarrow}$ that orders the variables by
reading columns right to left and bottom to top. Based on the results of \cite{Knutson.Miller} and \cite{WYIII} as well
as some computation (exhaustively for $n\leq 6$, as well as many random examples for $n\leq 10$), we believe that
this choice always satisfies Conjecture~\ref{conj:firstconj}.

However, we actually desire a choice of $\pi$ that, in some sense,
gives the neatest combinatorics; the reducedness and equidimensionality offered by Conjecture~\ref{conj:firstconj} merely provides a necessary criterion. 
Let us call $\pi$ {\bf generalized antidiagonal} if, after some
permutation of the rows, and separately, some permutation of the columns, of $Z$, then $\pi$ induces a pure lexicographic ordering
on ${\mathbb C}[{\bf z}]$ that favors the antidiagonal (southwest-northeast) term of any sub-determinant of the shuffled generic matrix ${\widetilde Z}$.
We can extend this definition to shufflings $\pi$ for the variables of $Z^{(v)}$ in the obvious way. Now call $\prec_{v,w,\pi}$ generalized antidiagonal
if $\pi$ is for $Z^{(v)}$. The {\bf SW-NE shuffling} $\pi_{\nearrow}$ that orders variables by
reading rows bottom to top and left to right induces such a term order. Also, the same is true for $\pi_{\nwarrow}$. 
However, our main motivation for these definitions comes from the fact that the 
term order of Section~4.2 is also generalized antidiagonal. On the other hand, we have:

\begin{example}
\label{exa:counter}
In general, not all choices of generalized antidiagonal $\pi$ satisfy Conjecture~\ref{conj:firstconj}.
In particular, if we consider the Schubert determinantal ideal for $w=563412$ using $\pi_{\nearrow}$,
the limit scheme is not reduced. (As we have said (cf. Section 2.3), Schubert determinantal ideals are special cases of
Kazhdan-Lusztig ideals.) However, it is reduced if one uses $\pi_{\nwarrow}$ (either by direct computation, or by
the Gr\"{o}bner basis theorem of \cite{Knutson.Miller}). \qed
\end{example}

Notwithstanding Example~\ref{exa:counter}, we expect that among the generalized antidiagonal $\pi$'s, there exists a choice that
not only satisfies Conjecture~\ref{conj:firstconj}, but whose $\pi$-shuffled tableaux exhibit ``good'' combinatorial
features. This assertion is consistent with our covexillary work, as well as the results of \cite{Knutson.Miller, KMY, WYIII}.

\begin{problem}
Find a Gr\"{o}bner basis with squarefree initial terms for $I_{v,w}$, with respect to (any of) the orders $\prec_{v,w,\pi}$ satisfying Conjecture~\ref{conj:firstconj}.
\end{problem}

One cannot always use the defining (or essential) minors of $I_{v,w}$:

\begin{example}
Consider $w=45231$ and $v=23451$. Then the defining minors give
$$I_{v,w}=
\left\langle
z_{11},\
\left|\begin{matrix}
z_{31} & 1 & 0\\
z_{21} & z_{22} & 1\\
z_{11} & z_{12} & z_{13}
\end{matrix}\right|
\right\rangle=
\langle z_{11},z_{11}-z_{13}z_{21}-z_{12}z_{31}+z_{13}z_{22}z_{31}\rangle.
$$
If they formed a Gr\"{o}bner basis with respect to some
$\prec_{v, w,\pi}$, we would have ${\rm init}_{\prec_{v,w,\pi}}
I_{v,w}=\langle z_{11}\rangle$. However,
$-z_{13}z_{21}-z_{12}z_{31}+z_{13}z_{22}z_{31}$ is in $I_{v,w}$ and
its initial term is $-z_{13}z_{21}$, $-z_{12}z_{31}$ or
$z_{13}z_{22}z_{31}$, none of which are in the ideal $\langle
z_{11}\rangle$. So ${\rm init}_{\prec_{v,w,\pi}} I_{v,w}\supsetneq
\langle z_{11}\rangle$, therefore the two stated generators do not
form a Gr\"{o}bner basis.\qed
\end{example}

If $V$ is a vertex of a simplicial complex $\Delta$ one can speak of the {\bf deletion}
and the {\bf link}:
\[{\rm del}_V(\Delta)=\{F\in \Delta: V\not\in F\}, \ \
{\rm link}_V(\Delta)=\{F\in {\rm del}_V(\Delta): \{V\}\cup F\in \Delta\},\]
as well as the {\bf star} of $V$, which is the cone of the link from $V$:
${\rm star}_V(\Delta)=\{F\in \Delta: \{V\}\cup F\in\Delta\}$.
One has the {\bf vertex decomposition} $\Delta={\rm star}_V(\Delta)\cup {\rm del}_{V}(\Delta)$.
L.~Billera and S.~Provan \cite{Billera.Provan} defined what it means for $\Delta$ to
be {\bf vertex decomposable}. By definition, every simplex is vertex decomposable, and in general,
$\Delta$ is vertex decomposable if and only if it is pure and have a vertex decomposition where the deletion
and link are vertex decomposable. They proved that a vertex decomposable complex is shellable (and therefore
Cohen-Macaulay). The conjecture below gives one possible feature of a ``good'' choice of $\pi$:

\begin{conjecture}
\label{conj:secondconj}
There exists a choice of $\pi$ among those satisfying Conjecture~\ref{conj:firstconj}
such that the Stanley-Reisner simplicial complex $\Delta_{v,w}$ associated to ${\rm init}_{\prec_{v,w,\pi}} {\mathcal N}_{v,w}$
is homeomorphic to a vertex decomposable, and hence shellable, ball or sphere (in particular the
hypothesis of Theorem~\ref{thm:basic}(VI) holds).
\end{conjecture}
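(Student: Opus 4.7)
The plan is to mirror the strategy used in the covexillary case (Theorems~\ref{thm:standard} and~\ref{thm:prime}), but with a more flexible choice of shuffling. I would first commit to the candidate $\pi = \pi_{\nwarrow}$, the SE-NW shuffling, for which the paper reports computational evidence that Conjecture~\ref{conj:firstconj} already holds. Next I would try to produce an explicit Gr\"obner basis for $I_{v,w}$ with respect to $\prec_{v,w,\pi_{\nwarrow}}$, with squarefree initial terms. As the example $w=45231$, $v=23451$ shows, the essential minors themselves do not always suffice; a Bruhat-style completion procedure (or the ``Frobenius splitting'' compatible equations of \cite{Knutson:frob}) would be the starting point. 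Granting such a basis, reducedness of ${\rm init}_{\prec_{v,w,\pi_{\nwarrow}}} I_{v,w}$ is automatic and the Stanley--Reisner complex $\Delta_{v,w,\pi_{\nwarrow}}$ is the object to analyze.

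To establish vertex decomposability, I would argue by induction on $\ell(w)-\ell(v)$. Pick the vertex $V$ of $\Delta_{v,w,\pi_{\nwarrow}}$ corresponding to the $\prec_{v,w,\pi_{\nwarrow}}$-smallest variable $z_{ij}$ that appears in a generator of the initial ideal. The deletion $\del_V(\Delta_{v,w,\pi_{\nwarrow}})$ is the Stanley--Reisner complex of the ideal obtained by adjoining $z_{ij}$, which geometrically should correspond to slicing ${\mathcal N}_{v,w}$ by the hyperplane $z_{ij}=0$; one then hopes that this slice is the limit of a Kazhdan--Lusztig variety for a pair $(v',w)$ with $v \lessdot v'$ in Bruhat order determined by the reflection singled out by $(i,j)$. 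The star (and hence link) $\link_V(\Delta_{v,w,\pi_{\nwarrow}})$ should correspond to a degeneration of the Kazhdan--Lusztig variety after contracting out $z_{ij}$, conjecturally matching the initial ideal for $(v,w')$ with $w' \lessdot w$ obtained by a dual operation. If both the link and deletion are themselves initial ideals of Kazhdan--Lusztig varieties (or directly vertex decomposable by another route), the recursion closes and Billera--Provan vertex decomposability follows, with the base case being the empty/full simplex. Sphericity versus ballness can then be diagnosed from whether the tangent cone is Cohen--Macaulay of the expected dimension, using Theorem~\ref{thm:basic}(IV).

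The main obstacle I anticipate is precisely the closure step: verifying that $\del_V$ and $\link_V$ arise from other Kazhdan--Lusztig ideals. In the subword complex setting of \cite{Knutson.Miller:subword} and in the covexillary extension of \cite{KMY}, this closure was the crux, and in each case it was underwritten by a rich ambient combinatorial model (subword complexes, pipe dreams, the permutation $\Theta_{v,w}$). For arbitrary $(v,w)$, no such model is currently known: pipe dreams do not have a natural interpretation outside of the cases studied in \cite{Knutson.Miller, KMY, WYIII}, and even naming the initial combinatorial object is open, as the discussion of ``$\pi$-shuffled tableaux'' in Section~3 emphasizes. Consequently, the deepest part of the plan is to build, perhaps using A.~Knutson's iterated degenerations from \cite{Knutson:patches, Knutson:frob} tracked alongside projectivized tangent cones, a combinatorial framework that is simultaneously compatible with Gr\"obner degeneration, rich enough to encode $\Delta_{v,w,\pi_{\nwarrow}}$, and closed under the deletion/link recursion. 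Constructing such a framework, even conjecturally, is what would make the present conjecture tractable; without it, a case-by-case verification for small $n$ (in the spirit of the evidence already collected for Conjecture~\ref{conj:firstconj}) is the only available strategy.
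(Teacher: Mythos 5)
You should first note that the statement you are addressing is Conjecture~\ref{conj:secondconj}: the paper does not prove it, and offers only heuristic support (the covexillary theorems, the results of \cite{Knutson.Miller, KMY, WYIII}) plus partial computational checks of necessary conditions for $n\leq 6$. Your text is accordingly a research program rather than a proof, and you candidly identify its missing core yourself: the ``closure step'' showing that the deletion and link of a well-chosen vertex of $\Delta_{v,w,\pi}$ are again Stanley--Reisner complexes of initial ideals of Kazhdan--Lusztig varieties (or are otherwise vertex decomposable). Nothing in the proposal supplies that step, nor the Gr\"{o}bner basis with squarefree initial terms it presupposes, so the argument does not establish the conjecture; it restates it together with a plausible induction scheme whose inductive step is exactly the open problem.

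There is also a concrete error in the setup. You ``commit to the candidate $\pi=\pi_{\nwarrow}$'' uniformly, but the paper's own final example in Section~8 reports that for $w=563412$ and suitable $v$ the complex $\Delta_{v,w,\pi_{\nwarrow}}$ violates the necessary condition that each codimension-one face lie in at most two facets, so it cannot be homeomorphic to a ball or sphere. Thus even if $\pi_{\nwarrow}$ always satisfies Conjecture~\ref{conj:firstconj} (itself only conjectural), it cannot witness Conjecture~\ref{conj:secondconj} for all $(v,w)$; any viable strategy must let $\pi$ depend on $(v,w)$, as the covexillary construction of Section~4.2 does (and the same example shows $\pi_{\nearrow}$ fails for other $v$). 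A smaller quibble: distinguishing ball from sphere is a question about the boundary of $\Delta_{v,w,\pi}$ (e.g., whether some codimension-one face lies in only one facet), not something Cohen--Macaulayness or Theorem~\ref{thm:basic}(IV) can detect. Your diagnosis of where the real difficulty lies --- the absence, outside the subword-complex and covexillary settings, of a combinatorial model closed under the deletion/link recursion --- agrees with the paper's discussion around Problems~\ref{problem:subword} and~\ref{problem:specific}, but identifying the obstacle is not the same as overcoming it.
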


Our faith in Conjecture~\ref{conj:secondconj} is mainly based on the results of \cite{Knutson.Miller, KMY, WYIII}, and our
covexillary results. We also have some limited experimental evidence for the conjecture.
We computationally checked implications of the conjecture (Cohen-Macaulayness and connectedness of
$\Delta_{v,w,\pi}$, whether $\Delta_{v,w,\pi}$ has the homology of a ball/sphere, nonnegative $h$-vector,
and that each codimension one face is contained in at most two facets), using the
shufflings $\pi_{\nearrow}$ and $\pi_{\nwarrow}$. We computed these exhaustively for $n\leq 5$ and for the majority of 
$n\leq 6$ (where already the computational demands are high), as well as some larger cases.


\begin{problem}
For which $\pi$ does the conclusion of Conjecture~\ref{conj:secondconj} hold?
\end{problem}

\begin{example}
Even for choices of $\pi$ such that Conjecture~\ref{conj:firstconj} holds, Conjecture~\ref{conj:secondconj} is not
always satisfied. Looking at the implication ``each codimension one face is contained in at most two facets'',
if we utilize $\pi_{\nearrow}$, this holds for $n=6$ in all cases except when $w=563412$, and
$v=123456, 123546, 132456$ or $132546$, whence $\Delta_{v,w,\pi_{\nearrow}}$ cannot always be a ball.
On the other hand, if one chooses $\pi_{\nwarrow}$, the implication is satisfied on these examples, but
not on others, say $w=563412$, $v=123546$.\qed
\end{example}

Our covexillary results also motivate the next two problems, which indicate
successive refinements of Conjecture~\ref{conj:secondconj}:

\begin{problem}
\label{problem:subword}
When does there exist a choice of $\pi$ such that the Stanley-Reisner complex $\Delta_{v,w}$ associated to ${\rm init}_{\prec_{v,w,\pi}} {\mathcal N}_{v,w}$ is homeomorphic to a subword complex 
$\Delta(Q,\sigma)$, as introduced by
\cite{Knutson.Miller:subword}?
\end{problem}

We refer the reader to \cite{Knutson.Miller:subword} for the definition of subword complexes, and where it was
established that they
are vertex decomposable and homeomorphic to balls/spheres. The facets are indexed by subwords of the fixed
word $Q=(i_1,i_2,\ldots,i_M)$ of length $\ell(\sigma)$ such that $s_{i_{j_1}}\cdots s_{i_{j_{\ell}}}=\sigma$
where $s_k=(k\leftrightarrow k+1)$ is a simple reflection in a symmetric group.
Thus, one hopes for a combinatorial recipe $(v,w)\mapsto (Q,\sigma)$ that solves the multiplicity problem.

\begin{problem}
\label{problem:specific}
When does there exist a choice of $\pi$ such that the ${\rm init}_{\prec_{v,w,\pi}} {\mathcal N}_{v,w}$ is
equal, after crossing by an appropriate affine space and permutation of coordinates,
to the limit of a matrix Schubert variety, or another Kazhdan-Lusztig variety,
under the Gr\"{o}bner degeneration of \cite{Knutson.Miller}, \cite{KMY}, \cite{Knutson:patches} 
and/or \cite{WYIII}?
\end{problem}

Since the Stanley-Reisner complexes of the stated Gr\"{o}bner limits are subword
complexes,
Problem~\ref{problem:subword} is solved by Problem~\ref{problem:specific}.

Finally, our covexillary results suggest an affirmative answer to:

\begin{problem}
Is multiplicity of $e_v\in X_w$ (and/or the Hilbert series of ${\mathcal O}_{e_v,X_w}$) independent of characteristic?
\end{problem}

\section*{Acknowledgements}
We thank Sara Billey, Alex Fink, Allen Knutson, Kevin Purbhoo, Bruce Sagan, Hal Schenck, Alexandra Seceleanu, David Speyer,
Michelle Wachs and Alexander Woo for helpful conversations.
We especially thank Ezra Miller for answering a number of literature questions.
AY thanks Alexander Woo for lessons learned during a related sequence of projects.
AY is partially supported by NSF grants DMS-0601010 and DMS-0901331. We made extensive
use of the computer algebra system {\tt Macaulay~2} during our investigations.

\end{document}